
\documentclass[a4paper,twoside]{amsart}
\pdfoutput=1


\usepackage[T1]{fontenc}
\usepackage[utf8]{inputenc}
\usepackage{lmodern}
\usepackage{enumerate}

\usepackage{amsmath}
\usepackage{amsthm}
\usepackage{amssymb}
\usepackage{amscd}
\usepackage{mathrsfs}
\usepackage{bbold}




\usepackage{tikz}
\usetikzlibrary{arrows,backgrounds,matrix,decorations.pathreplacing,shapes}


\usepackage{scrpage2}

\usepackage{thumbpdf}
\usepackage{hypbmsec}
\usepackage{url}
\usepackage[colorlinks,linkcolor=blue,bookmarksopen=true,bookmarksopenlevel=4,final]{hyperref}
\usepackage{aliascnt}
\usepackage{cleveref}


\pagestyle{scrheadings}

\setheadsepline{0.4pt}
\automark[subsection]{section}
\ihead{A Simplicial Calculus for Local Intersection Numbers}


\makeatletter
\newtheorem*{rep@theorem}{\rep@title}
\newcommand{\newreptheorem}[2]{%
\newenvironment{rep#1}[1]{%
 \def\rep@title{#2 \ref*{##1}}%
 \begin{rep@theorem}}%
 {\end{rep@theorem}}}
\makeatother

\newtheorem{satz}{Theorem}[section]
\newreptheorem{satz}{Theorem}

\newaliascnt{lem}{satz}
\newtheorem{lem}[lem]{Lemma}
\aliascntresetthe{lem}

\newaliascnt{kor}{satz}
\newtheorem{kor}[kor]{Corollary}
\aliascntresetthe{kor}

\newaliascnt{prop}{satz}
\newtheorem{prop}[prop]{Proposition}
\aliascntresetthe{prop}

\newtheorem*{conj}{Vanishing Conjecture}

\theoremstyle{definition}
\newaliascnt{defn}{satz}
\newtheorem{defn}[defn]{Definition}
\newreptheorem{defn}{Definition}
\aliascntresetthe{defn}

\newaliascnt{bsp}{satz}
\newtheorem{bsp}[bsp]{Example}
\aliascntresetthe{bsp}

\newaliascnt{algo}{satz}
\newtheorem{algo}[algo]{Algorithm}
\aliascntresetthe{algo}

\theoremstyle{remark}
\newaliascnt{bem}{satz}
\newtheorem{bem}[bem]{Remark}
\aliascntresetthe{bem}

\crefformat{equation}{#2(#1)#3}
\crefformat{satz}{#2theorem~#1#3} 
\crefformat{defn}{#2definition~#1#3} 
\crefformat{lem}{#2lemma~#1#3} 
\crefformat{kor}{#2corollary~#1#3} 
\crefformat{prop}{#2proposition~#1#3} 
\crefformat{chapter}{#2chapter~#1#3}
\crefformat{bem}{#2remark~#1#3}
\crefformat{thm}{#2theorem~#1#3}
\crefformat{bsp}{#2example~#1#3}
\crefformat{appendix}{#2appendix~#1#3}
\crefformat{algo}{#2algorithm~#1#3}

\numberwithin{equation}{section}



\newcommand{\card}{\#} 
\DeclareMathOperator{\spec}{Spec}
\DeclareMathOperator{\Spec}{Spec}
\DeclareMathOperator{\quot}{Quot}
\DeclareMathOperator{\Quot}{Quot}

\DeclareMathOperator{\Proj}{Proj}

\DeclareMathOperator{\Bl}{Bl}
\DeclareMathOperator{\Div}{div}
\DeclareMathOperator{\CH}{CH}
\DeclareMathOperator{\RK}{\mathscr{R}}

\DeclareMathOperator{\Tor}{Tor}
\DeclareMathOperator{\len}{length}
\DeclareMathOperator{\Hom}{Hom}

\DeclareMathOperator{\mycolim}{colim}
\newcommand{\colim}{\mathop{\mycolim}}

\DeclareMathOperator{\trdeg}{trdeg}
\DeclareMathOperator{\ldeg}{ldeg}

\DeclareMathOperator{\im}{Im}
\DeclareMathOperator{\Rat}{Rat}

\newcommand{\IId}{\mathcal{I}}

\newcommand{\mId}{\mathfrak{m}}

\newcommand{\Oo}{\mathcal{O}}
\newcommand{\Ok}{\mathcal{K}}

\newcommand{\IZ}{\mathbb{Z}}
\newcommand{\IQ}{\mathbb{Q}}

\newcommand{\IN}{\mathbb{N}}

\newcommand{\IF}{\mathbb{F}}

\newcommand{\angles}[1]{\langle#1\rangle}

\newcommand{\blc}{\mathrm{B}}  
\newcommand{\pr}{\mathrm{pr}}    

\newcommand{\KC}{\mathcal{C}}

\newcommand{\rat}{\mathrm{Rat}}

\newcommand{\cadiv}{\mathrm{CaDiv}}
\newcommand{\sset}{\mathrm{sSet}}
\newcommand{\set}{\mathrm{Set}}

\newcommand{\Part}{\mathcal{P}}

\newcommand{\poset}{\mathrm{Poset}}

\newcommand{\sd}{\mathrm{sd}}
\newcommand{\unt}{\mathrm{sd}}




\newcommand{\BIGOP}[1]{\mathop{\mathchoice%
{\raise-0.22em\hbox{\huge $#1$}}%
{\raise-0.05em\hbox{\Large $#1$}}{\hbox{\large $#1$}}{#1}}}

\newcommand{\BIGboxplus}{\mathop{\mathchoice%
{\raise-0.35em\hbox{\huge $\boxplus$}}%
{\raise-0.15em\hbox{\Large $\boxplus$}}{\hbox{\large $\boxplus$}}{\boxplus}}}

\hyphenation{Rand-ab-bil-dung-en}
\hyphenation{Mo-ving-lem-ma}
\hyphenation{Re-duk-tions-men-ge}
\hyphenation{al-ge-bra-ische}

\begin{document}
    \newlength{\drop}

    \title{A Simplicial Calculus for Local Intersection Numbers at Nonarchimedian Places on Products of Semi-stable Curves}
    \date{\today}
    \author{Johannes Kolb}

    \maketitle

    \begin{abstract}
        We analyse the subring of the Chow ring with support generated by the
        irreducible components of the special fibre of the Gross-Schoen
        desingularization of a $d$-fold self product of a semi-stable curve over the spectrum of a
        discrete valuation ring.
        For this purpose we develop a calculus 
        which allows to determine intersection numbers in the special fibre explicitly. 
        As input our simplicial calculus needs only combinatorial data of the special fibre. 
        It yields a practical procedure for calculating even self intersections in the special fibre.
        The first ingredient of our simplicial calculus is a localization formula,
        which reduces the problem of calculating intersection numbers to a special situation.
        In order to illustrate how our simplicial calculus works, we calculate all
        intersection numbers between divisors with support in the special fibre 
        in dimension three and four. 
        The localization formula and the general idea were already presented
        for $d=2$ in a paper of Zhang \cite[Ch. 3]{zhang}. 
        In our present work
        we achieve a generalisation to arbitrary $d$.
    \end{abstract}

    \tableofcontents

    \section{Introduction} 

\begin{par}
    Let $R$ be a complete discrete valuation ring with algebraically closed residue class field $k$. 
    We denote the quotient field $\quot(R)$ by $K$ and a uniformizing element with $\pi \in R$.
    Furthermore let $S$ denote the scheme $\spec{R}$ with generic point $\eta$ and special point $s$.
    Let $X$ be a regular strict semi-stable $S$\nobreakdash-scheme.
    We denote by $\cadiv_{X_s}(W)$
    the group of Cartier divisors on $X$ with support in the special fibre $X_s$.
    Intersection theory with support yields a product
    \[
        \Big(\cadiv_{X_s}(X)\Big)^p \to \CH^{p-1}(X_s),
    \]
    where $\CH^p$ denotes the Chow group in codimension $p$.
    If $X/S$ is proper of dimension $\dim(X)=d+1$,
    then $X_s$ is proper over a field and therefore 
    there is a degree map $\ldeg: \CH^d(X_s) \to \IZ$.
    We are interested in the pairing
    \begin{equation}
        \label{einl-paar}
        \begin{aligned}
            \big(\cadiv_{X_s}(X)\big)^{d+1} &\to \IZ, \\
            (C_0, \ldots, C_d) &\mapsto \ldeg(C_0 \cdot \cdots \cdot C_d)
        \end{aligned}
    \end{equation}
    given by the intersection pairing and the degree map.
\end{par}
\begin{par}
    Let us first look at a simple example: Assume that $X$ is a regular strict
    semi-stable model of a smooth proper curve $X_\eta$ over $K$.
    By the semi-stable reduction theorem, each smooth curve over $K$ 
    has a regular strict semi-stable model after finite base-change.
    We denote by $\Gamma(X)$ the dual graph of $X_s$. 
    Then the pairing \cref{einl-paar} can be calculated by counting suitable edges in $\Gamma(X)$:
    For instance, the self-intersection number of an irreducible 
    component $C \subseteq X_s$ is given by the number of edges in $\Gamma(X)$
    connected to $C$.
\end{par}
\begin{par}
    We deduce an analog to this description in the following higher-dimensional setting,
    in which an explicit construction of a semi-stable model is still possible:
    Let $X$ be a regular strict semi-stable model of a smooth curve over $K$. 
    Then a model of $(X_\eta)^d$ is given by the $d$-fold product 
    $X \times_R \cdots \times _R X$. 
    As already shown by Gross and Schoen (\cite{gross}),
    this model can be desingularized to a regular strict semi-stable scheme.
    Using as additional data an ordering on the set $X_s^{(0)}$ 
    of irreducible components of $X_s$ we can make this desingularization
    canonical, therefore we get a well-defined desingularization $W$ of the scheme $X^d$. 
\end{par}
\begin{par}
    As replacement for the reduction graph in higher dimension we use the incidence relations in $W_s$ to define
    a simplicial set $\RK(W)$, the simplicial reduction set. The underlying set of its geometric realization 
    is just the product $\RK(W)=\Gamma(X)^d$
    endowed with a triangulation depending on the chosen order on $X_s^{(0)}$.
    We introduce a calculus on the subring of the Chow group with support in the
    special fibre, which is generated by the classes of the irreducible
    components of $W_s$, which enables us to calculate intersection numbers of components of the special fibre.
    This simplicial calculus uses only explicitly given rational equivalences which arise 
    from the combinatoric of $\RK(W)$.
    It is therefore described using a ring $\KC(\RK(W))$, which is generated by the $0$-simplices in $\RK(W)$ 
    (see \cref{schnitt-chw}).
\end{par}
\begin{par}
    The calculus determines intersection numbers in a very controlled way, which ensures that the numbers are localized
    in a certain way. 
    It is enough to determine these numbers in the following local situation:
    Let $\bar L$ denote the regular strict semi-stable model $\bar L:=\Proj(x_0x_1 - \pi z^2)$.
    The special fibre $\bar L_s$ consists of two irreducible components $\Div(x_0)$ and $\Div(x_1)$, 
    which we order by $\Div(x_0) < \Div(x_1)$.
    Let $d \in \IN$ and $M$ be the Gross-Schoen desingularization described above of the product $L^d$. 
    Then $\RK(M)$ is a cube with its the standard desingularization.
\end{par}
\begin{par}
    The reduction simplicial set $\RK(W)$ can be split into such cubes:
    Denote by $\Gamma(X)^1$ the edges of the graph $\Gamma(X)$.
    For each tuple $\gamma=(\gamma_1, \ldots, \gamma_d)$ with 
    $\gamma_1, \ldots, \gamma_d \in \Gamma(X)^1$ there exists by functoriality
    an associated embedding
    $i_\gamma:\RK(M) \to \RK(W)$, 
    which induces a morphism
    $i_\gamma^*: \KC(\RK(W)) \to \KC(\RK(M))$.
    These morphisms localise the problem of calculating intersection numbers:
\end{par}
\begin{satz}
    Let $\alpha \in \KC(W)^{d+1}$. Then the equation
    \[
        \ldeg_W(\alpha) = \sum_{\gamma=(\gamma_1, \ldots, \gamma_d) 
            \in (\Gamma(X)^1)^d}
        \ldeg_M(i_\gamma^*(\alpha))
    \]
    holds.
\end{satz}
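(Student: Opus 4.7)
The plan is to reduce the global degree $\ldeg_W$ to a sum of étale-local contributions indexed by $d$-tuples of edges of $\Gamma(X)$, exploiting the étale-local product structure of $W$ at a product of nodes. First I would use that both sides of the asserted identity are $(d+1)$-multilinear in $\alpha$ and reduce to the monomial case $\alpha=v_0\cdots v_d$ where each $v_j$ is a vertex of $\RK(W)$, and therefore corresponds to an irreducible component of $W_s$. The intersection cycle $v_0\cdots v_d$ is then a $0$-cycle on $W_s$ supported at finitely many closed points, and $\ldeg_W(\alpha)$ decomposes as a sum of local contributions at these points.

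Next I would localise the support: every contributing closed point must lie above a product of nodes in $X^d$. Indeed, at any other point of $X^d$ at least one of the $d$ factors of $X$ is smooth, so $W$ is étale-locally a product of a lower-dimensional Gross-Schoen model with a smooth factor, and $d+1$ divisors whose supports are components of the special fibre cannot meet properly there. This attaches to every contributing closed point of $W_s$ a unique tuple $\gamma=(\gamma_1,\ldots,\gamma_d)\in(\Gamma(X)^1)^d$ recording the nodes over which the point lies, and groups the local contributions according to $\gamma$.

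To identify the sum over points lying above the product of nodes of type $\gamma$ with $\ldeg_M(i_\gamma^*(\alpha))$, I would invoke the functoriality of the Gross-Schoen desingularization with respect to the étale local model of a node. Since every node of $X_s$ is étale-locally isomorphic to the unique node of $\bar L_s$, the preimage in $W$ of an étale neighbourhood of a product of nodes of type $\gamma$ is étale-locally isomorphic to $M$, and this identification is compatible on incidence data with $i_\gamma$. The local contributions therefore match; summing over $\gamma$ produces the asserted formula.

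The main obstacle will be making this last comparison precise. One must verify that the combinatorial restriction $i_\gamma^*$ on $\KC(\RK(-))$ really coincides with the geometric pullback under the étale-local identification of $W$ with $M$, including matching the ordering on the two components of $\bar L_s$ with the ordering inherited from $X_s^{(0)}$ that governs the canonical choice of blow-ups in the Gross-Schoen construction. Some additional care is needed for boundary cases such as loops in $\Gamma(X)$ or repeated components within a single $v_j$, where one must confirm that the local-to-global sum neither omits nor double-counts contributions.
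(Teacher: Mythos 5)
Your strategy only covers the easy half of the statement. The reduction by multilinearity gets you to monomials $C_0\cdots C_d$ in the vertices of $\RK(W)$, but \emph{not} to proper monomials, i.e.\ monomials with pairwise distinct factors. For a monomial with a repeated factor (the extreme case being $[C]^{d+1}$ for a single component $C$ of $W_s$) the class $\varphi_W(\alpha)\in\CH^{d+1}_{W_s}(W)$ is supported on the positive-dimensional set $C_0\cap\cdots\cap C_d$, not on finitely many closed points, so there is no decomposition of $\ldeg_W(\alpha)$ into local contributions at points lying over products of nodes, and your second and third steps collapse. This is not a boundary case requiring ``additional care'': self-intersections are precisely the difficulty the whole calculus is built to resolve. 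The paper's proof (\cref{schnitt-grad-vergleich}) first invokes the combinatorial moving lemma \cref{schnitt-moving}, which uses the explicit rational equivalences \cref{schnitt-chw2} and \cref{schnitt-chw3} (via the shifting lemma \cref{schnitt-schieb}) to rewrite any class in $\KC^{d+1}(\Gamma(X)^d)$ as a combination of proper monomials; since both $\ldeg_W\circ\,\varphi_W$ and $\sum_\gamma\ldeg_M\circ\, i_\gamma^*$ annihilate the ideal $\Rat(\Gamma(X)^d)$, it then suffices to compare the two maps on proper monomials. Your proposal contains no substitute for this step, and without it the theorem is not reduced to anything you can localise.

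For proper monomials your outline is essentially sound, though more elaborate than necessary: the paper does not set up an \'etale-local identification of $W$ with $M$ near a product of nodes, but simply observes that a proper monomial with nonempty intersection has geometric degree $1$ by the multiplicity-one statement \cref{chow-semi-eigtl}, while on the combinatorial side exactly one standard cube $i_\gamma$ contains the corresponding non-degenerate $d$-simplex (\cref{schnitt-graph-quad}), so exactly one summand of the right-hand side is nonzero and it equals $1$. Your geometric matching of local contributions could be made to work here, but to complete the proof you must either import the moving lemma or prove an equivalent reduction to proper cycles.
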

\begin{par}
    This theorem justifies a closer look on the local situation $M$. 
    We may subscript the vertices of $\RK(M)$ with coordinate vectors $v \in \IF_2^d$.
    Thus $\{C_v \mid v \in \IF_2^d\}$ is a basis of $\KC(\RK(M))^1$. The discrete Fourier transforms
    \[
        F_v := \sum_{w \in \IF_2^d} (-1)^{\angles{v,w}} C_w
    \]
    yield another basis of $\KC(\RK(M))^1_\IQ:=\KC(\RK(M))^1 \otimes_\IZ \IQ$. 
    It turns out that in this basis the intersection numbers
    are relatively easy to describe for $d \in \{2,3\}$:
\end{par}
\begin{repsatz}{schnitt-zahlen-d-2}
    Let $d=2$ and $v_1, v_2, v_3$ vectors in $\IF_2^2$. 
    Then the following holds:
    \[
        \ldeg(F_{v_1}F_{v_2} F_{v_3}) = \begin{cases}
            -32 & \textrm{ if } v_1=v_2=v_3 = (1,1), \\
            16  & \textrm{ if } \{v_1,v_2,v_3\} = \{(1,0),(0,1),(1,1)\}, \\
            0   & \textrm{ otherwise}.
        \end{cases}
    \]
\end{repsatz}
\begin{par}
    Using a different desingularization procedure, Zhang already studied a similar situation 
    in \cite[3.1]{zhang}.
\end{par}
\begin{par}
    Note that for the case $d=3$ the symmetric group $S_3$ acts on vectors in $\IF_2^3$ by 
    permuting the standard basis and the symmetric group $S_4$ acts on tuples
    $(v_0,\ldots,v_3)$ by a permutation of the elements. For $\sigma \in S_4, \tau \in S_3$ denote
    the successive appliction of both operations by
    \[
        (v_0, \ldots v_3)^{\sigma,\tau} := (v^\tau_{\sigma(0)}, \ldots, v^\tau_{\sigma(3)}).
    \]
\end{par}
\begin{repsatz}{schnitt-zahlen-d-3}
    Let $V=(v_0, \ldots v_3) \in (\IF_2^3)^4$ be a 4-tuple of vectors in $\IF_2^3$.
    Then the intersection numbers in $\KC(I^3)_\IQ$ are
    \[
        \ldeg(F_{v_0}F_{v_1}F_{v_2}F_{v_3}) = \begin{cases}
            -64   & \textrm{ if } V=(100,010,001,111)^{\sigma,\tau}\textrm{ for }\sigma \in S_4, \tau \in S_3, \\
            -64   & \textrm{ if } V=(100,010,101,011)^{\sigma,\tau}\textrm{ for }\sigma \in S_4, \tau \in S_3, \\
            -64   & \textrm{ if } V=(100,110,101,111)^{\sigma,\tau}\textrm{ for }\sigma \in S_4, \tau \in S_3, \\
            128   & \textrm{ if } V=(100,011,011,111)^{\sigma,\tau}\textrm{ for }\sigma \in S_4, \tau \in S_3, \\
            128   & \textrm{ if } V=(100,111,111,111)^{\sigma,\tau}\textrm{ for }\sigma \in S_4, \tau \in S_3, \\
            128   & \textrm{ if } V=(110,110,101,011)^{\sigma,\tau}\textrm{ for }\sigma \in S_4, \tau \in S_3, \\
            -128  & \textrm{ if } V=(110,101,111,111)^{\sigma,\tau}\textrm{ for }\sigma \in S_4, \tau \in S_3, \\
            512   & \textrm{ if } V=(111,111,111,111)^{\sigma,\tau}\textrm{ for }\sigma \in S_4, \tau \in S_3, \\
            0    & \textrm{ otherwise }.
        \end{cases}
    \]
\end{repsatz}
\begin{par}
    Especially interesting is the fact that many of the intersection numbers vanish. 
    We may therefore propose a vanishing conjecture, which we will describe in the following:
\end{par}
\begin{defn}
    Let $\Part=\{P_1, \ldots, P_l\}$ be a partition of the set $\{1, \ldots, d\}$
    and $v=(v_1, \ldots, v_d) \in \IF_2^d$.
    Then set 
    \[
        \alpha(\Part, v) := \# \{ i \in \{1, \ldots, l\} \mid \exists j \in P_i, v_j = 1 \}.
    \]
\end{defn}
\begin{repdefn}{limit-konv-bed}
    Let $d \in \IN$. We say that $d$ verifies the vanishing condition, 
    iff for each $\Part$ a partition of $\{1, \ldots, d\}$
    and $v_0, \ldots, v_d \in \IF_2^d$ with
    \[
        \sum_{i} \alpha(\Part, v_i) < d+|\Part|
    \]
    the intersection number
    \[
        \ldeg( \prod_{i} F_{v_i} )
    \]
    vanishes.
\end{repdefn}
\begin{conj}
    The vanishing condition holds for arbitrary $d \in \IN$.
\end{conj}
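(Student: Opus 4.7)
The plan is to proceed by induction on $d$, using \cref{schnitt-zahlen-d-2} and \cref{schnitt-zahlen-d-3} as base cases. The strategy is to exploit the partition $\Part$ to decompose the cube $M$ associated to $\bar L^d$ according to the parts $P \in \Part$, and to derive the vanishing from a dimension-counting argument on a single part.

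The combinatorial reduction is immediate: since $\alpha(\Part, v) = \sum_{P \in \Part}[v|_P \neq 0]$, the hypothesis $\sum_i \alpha(\Part, v_i) < d + |\Part|$ rewrites as $\sum_{P} n_P < \sum_P(|P|+1)$ with $n_P := \#\{i \mid v_i|_P \neq 0\}$. By pigeonhole there exists a distinguished part $P \in \Part$ with $n_P \leq |P|$, so that at most $|P|$ of the restrictions $v_i|_P \in \IF_2^{|P|}$ are nontrivial. The plan is to leverage this single part to force the full intersection number to vanish.

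The main analytic step would be to establish a Künneth-type product formula in $\KC(\RK(M))$ aligned with the partition: one would construct pullback maps $\pi_P^\ast : \KC(\RK(M_P)) \to \KC(\RK(M))$, where $M_P$ is the cube associated to $\bar L^{|P|}$, and exhibit a decomposition of each Fourier divisor $F_v$ that isolates the contribution of the $P$-factor. Granted such a decomposition, a projection-formula argument would reduce $\ldeg_M \bigl(\prod_i F_{v_i}\bigr)$ to a sum of products of smaller-dimensional intersection numbers, one factor of which lives on $M_P$ (of dimension $|P|+1$) and involves at most $|P|$ nontrivial Fourier divisors $F^{(P)}_{v_i|_P}$; the remaining entries are $F^{(P)}_0 = \sum_w C_w$, which equals $[\Div(\pi)]$ up to sign on $M_P$ and is therefore principal. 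The intersection number on $M_P$ then vanishes, and with it the total one on $M$.

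The principal obstacle is the Künneth formula itself: the Gross-Schoen desingularization of $\bar L^d$ is not literally the fibre product of those of the factors $\bar L^{|P|}$, since products of semi-stable schemes are typically not semi-stable and further blow-ups are required. Consequently no literal geometric pullback $\pi_P^\ast$ is available, and the compatibility between the Fourier basis and the partition must be established combinatorially inside $\KC(\RK(M))$ using the ordered product triangulation of the cube. Controlling the exceptional contributions from the blow-ups and showing that they respect the bound $d + |\Part|$ is where I expect the argument to be delicate; the rich but sparse pattern of nonzero intersection numbers exhibited in \cref{schnitt-zahlen-d-2,schnitt-zahlen-d-3} makes the existence of such a formula plausible, but obtaining it in full generality likely requires either a conceptual reinterpretation of the basis $\{F_v\}$ or a finer nested induction that resolves the correction terms one at a time.
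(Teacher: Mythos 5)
The statement you are trying to prove is stated in the paper as a \emph{conjecture}: the paper gives no proof for general $d$, only the explicit verifications for $d=2$ and $d=3$ in \cref{schnitt-zahlen-d-2} and \cref{schnitt-zahlen-d-3} (plus a machine check for $d=4,5$). So there is no proof in the paper to compare yours against, and your proposal does not close the gap either --- you say so yourself when you flag the K\"unneth-type product formula as the ``principal obstacle''. A proposal whose central step is acknowledged to be unproven is not a proof, so what you have is a (reasonable) reduction plus an open problem.

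Two concrete remarks on the substance. First, your combinatorial reduction is correct: writing $\alpha(\Part,v)=\sum_{P\in\Part}[v|_P\neq 0]$, the hypothesis $\sum_i\alpha(\Part,v_i)<d+|\Part|=\sum_P(|P|+1)$ does give, by pigeonhole, a part $P$ with $n_P\leq|P|$, i.e.\ at least $d+1-|P|$ of the $v_i$ restrict to $0$ on $P$. Second, however, the K\"unneth step faces a structural obstruction beyond the fact that the Gross--Schoen model of $\bar L^d$ is not the fibre product of the models of the factors: the local degree $\ldeg_{M_P}$ is defined on $\CH^{|P|+1}_{(M_P)_s}(M_P)$, so a naive external-product formula would require total codimension $\sum_P(|P|+1)=d+|\Part|$, whereas $\ldeg_M$ lives on $\CH^{d+1}_{M_s}(M)$. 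For $|\Part|>1$ these do not match, so the factorization cannot be a literal product of cycle classes; the excess $|\Part|-1$ is exactly the slack in the conjectured bound, which is why the statement is plausible but also why it is not a formal consequence of any existing projection formula. Until you can formulate and prove a precise substitute for that factorization inside $\KC(I^d)$ --- for instance purely combinatorially from the relations \cref{schnitt-glb}--\cref{schnitt-glc} and \cref{schnitt-F} --- the argument remains a heuristic, and the statement remains a conjecture.
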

\begin{par}
    By \cref{schnitt-zahlen-d-2} and \cref{schnitt-zahlen-d-3} this conjecture is true for $d=2$ and $d=3$. 
    Using the computer algrebra system Sage \cite{sage} we were also able to verify the cases $d=4$ and $d=5$.
    This computer verification is a first demonstration of the power of the simplicial calculus developed in this paper.
\end{par}
\begin{par}
    Each verification of the vanishing conjecture for a particular d has
    non-trivial consequences which were our original motivation to consider
    this conjecture.  Namely we can derive a formula for the arithmetic
    intersection numbers of suitable adelic metrized line bundles on the d-fold
    self-product of any curve  in purely combinatorial and elementary analytic
    terms on the associated reduction complex. Details can be found in \cite{publ2}.
\end{par}



\section{Regular Strict Semi-stable Schemes} 
\label{rss}

\begin{par}
    For regular strict semi-stable curves the reduction graph is a good combinatorial
    invariant. We generalise this concept for arbitrary regular strict semi-stable schemes
    and define the simplicial reduction set.
    This combinatorial object is used later to compute intersection numbers
    in the special fibre.
\end{par}

\begin{par}
    Let $R$ denote a complete discrete valuation ring
    with algebraically closed residue field $k$. 
    The scheme $S:=\Spec{R}$ consists of the generic point $\eta$ and
    the special point $s$. Furthermore let $\pi$ denote an uniformizer of $R$.
\end{par}
\begin{par}
    Let $X$ be a scheme. The set of points of codimension $q$
    is denoted by $X^{(q)}$.
    In particular, we get the irreducible components by
    $X^{(0)}:=\{p \in X \mid \dim \Oo_{X,p} = 0\}$.
\end{par}
\begin{defn}
    \label{rss-def}
    \begin{par}
        \cite[2.16]{deJong}
        Let $S:=\Spec{R}$ be the spectrum of a discrete valuation ring $R$
        with algebraic closed residue field $k$.
        Let $X$ be an integral flat and separated $S$\nobreakdash-scheme of finite type.
        We call $X$ \emph{regular strict semi-stable}, if the following
        properties hold:
        \begin{enumerate}[(i)]
        \item
            The generic fibre $X_\eta$ is smooth,
        \item
            the special fibre $X_s$ is reduced,
        \item  
            each irreducible component $C$ of $X_s$ is a Cartier divisor on $X$,
            and
        \item
            if $C_1, \ldots C_m$ is a subset
            of irreducible components of $X_s$,
            then the scheme-theoretic intersection
            $C_1 \cap \ldots \cap C_m$ is either smooth
            or empty.
        \end{enumerate}
    \end{par}
    \begin{par}
        A regular strict semi-stable $S$\nobreakdash-variety of dimension $2$
        has relative dimension 1 over $S$ and is therefore called
        \emph{regular strict semi-stable curve over $S$}.
    \end{par}
\end{defn}


\begin{bsp}
    \label{rss-std-bsp}
    The affine scheme
    $\Spec{R[x_0, \ldots, x_n]/(x_0 \cdot\cdots\cdot x_n - \pi)}$ 
    is a regular strict semi-stable $S$\nobreakdash-variety.
\end{bsp}
\begin{par}
    According to Urs Hartl \cite{hartl} every regular strict semi-stable
    $S$\nobreakdash-variety is of this type:
\end{par}

\begin{satz}
    \label{ss-chara}
    A $S$\nobreakdash-variety $X$ is regular strict semi-stable 
    iff the generic fibre $X_\eta$ is smooth and for each closed point $x \in X_s$ 
    exists 
    an open neighbourhood $U$ in $X$, a number $m \in \IN$ and a smooth morphism
    \[ f: U \to L_m:=\Spec{R[x_0, \ldots x_{m}]/(x_0 \cdot\cdots\cdot x_{m} - \pi)}. \]
    The morphism can be chosen such
    that it maps the point $x$ to the ``origin'', i.e., the point $p_0$ given by the
    ideal $(x_0, \ldots x_m)$.
\end{satz}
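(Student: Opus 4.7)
\begin{par}
    My plan is to prove both implications by localising at a closed point of the special fibre. The ``if'' direction is the easier one and reduces to verifying the RSS conditions for the standard model $L_m$, which is \cref{rss-std-bsp}; the ``only if'' direction requires constructing an explicit smooth morphism to $L_m$ and checking its smoothness, for which the flatness verification is the crux.
\end{par}

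\begin{par}
    For the ``if'' implication, suppose that each closed point $x \in X_s$ admits a smooth morphism $f\colon U \to L_m$ sending $x$ to $p_0$. Smoothness of the generic fibre is given. The remaining conditions (ii)--(iv) are Zariski-local at closed points of $X_s$, and at $p_0 \in L_m$ they hold for transparent reasons: the special fibre $\Spec k[x_0,\ldots,x_m]/(x_0\cdots x_m)$ is reduced, each component $V(x_i)$ is principal and hence Cartier, and any scheme-theoretic intersection of these components is an affine space over $k$, hence smooth. Since $f$ is smooth, in particular flat, these properties pull back to $U$ near $x$.
\end{par}

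\begin{par}
    For the converse, fix a closed point $x \in X_s$ and let $C_0, \ldots, C_m$ be the irreducible components of $X_s$ containing $x$. By (iii) each $C_i$ has a local equation $f_i \in \Oo_{X,x}$, and since $X_s$ is reduced and equals the sum of its components as a divisor, one obtains $\pi = u\cdot f_0\cdots f_m$ locally at $x$ for some unit $u$; after absorbing $u$ into $f_0$, assume $\pi = f_0\cdots f_m$. The assignment $x_i\mapsto f_i$ then defines an $R$-algebra homomorphism $R[x_0,\ldots,x_m]/(x_0\cdots x_m - \pi) \to \Oo_{X,x}$, and, spreading out, a morphism $f\colon U \to L_m$ on some open neighbourhood $U \ni x$ which sends $x$ to $p_0$ because every $f_i \in \mathfrak{m}_{X,x}$.
\end{par}

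\begin{par}
    To show that $f$ is smooth at $x$ I would apply the fibre criterion. The fibre $f^{-1}(p_0) = \Spec \Oo_{X,x}/(f_0,\ldots,f_m)$ equals the local ring of $C_0 \cap \cdots \cap C_m$ at $x$, which is smooth over $k$ of dimension $d-m$ by (iv) (where $d+1 = \dim X$); it thus remains to verify flatness at $x$. For this I would invoke miracle flatness. Condition (iv) says the quotient $\Oo_{X,x}/(f_0,\ldots,f_m)$ is regular of dimension $d-m$, so $f_0,\ldots,f_m$ extend to a regular system of parameters and $X$ is regular---hence Cohen-Macaulay---at $x$. The scheme $L_m$ is itself regular globally, since the Jacobian of $x_0\cdots x_m - \pi$ with respect to $\pi$ equals $-1$, a unit. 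Finally the dimension equality $\dim_x X = (m+1) + (d-m) = \dim_{p_0} L_m + \dim_x f^{-1}(p_0)$ furnishes exactly the hypothesis miracle flatness needs. The main obstacle is precisely this flatness step: one must marshal three distinct ingredients---regularity of $X$ at $x$ extracted from (iv), global regularity of $L_m$, and the dimension count---into the criterion; were any of them to fail one would have to fall back on completing and identifying $\widehat{\Oo}_{X,x}$ with a formal power series ring over $\widehat{\Oo}_{L_m,p_0}$.
\end{par}
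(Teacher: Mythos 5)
The paper offers no proof of its own here---it simply cites \cite[Prop 1.3]{hartl}---so your write-up is a from-scratch argument, and its overall architecture (read off local equations $f_i$ from the Cartier components, factor $\pi=f_0\cdots f_m$, map to $L_m$, then combine the fibre criterion with miracle flatness) is exactly the standard one. The ``if'' direction is fine. In the converse, the factorization deserves one more line: ``$X_s$ equals the sum of its components as a divisor'' is a statement about cycles, not about Cartier divisors on a possibly non-factorial scheme; what you actually use is that reducedness gives $(\pi)=\bigcap_i(f_i)$ in $\Oo_{X,x}$ and that an intersection of finitely many distinct principal primes in a domain equals their product. That is a minor, fixable point.

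The genuine gap is the sentence ``Condition (iv) says the quotient $\Oo_{X,x}/(f_0,\ldots,f_m)$ is regular of dimension $d-m$.'' Condition (iv) of \cref{rss-def} as printed asserts only that the scheme-theoretic intersection is smooth or empty; it says nothing about its dimension, and the dimension is precisely the input miracle flatness requires. This is not a cosmetic omission: with (i)--(iv) read literally, the theorem is false. Take $X=\Spec{R[u,v,w]/(uv(u+v)-\pi)}$ with $\mathrm{char}\,K\neq 3$. This is an integral, flat, separated $S$-scheme of finite type (indeed regular), $X_\eta$ is smooth, $X_s=V(uv(u+v))$ is reduced, its three components $V(u)$, $V(v)$, $V(u+v)$ are Cartier, and every scheme-theoretic intersection of two or three of them equals $\Spec{k[w]}$, hence is smooth. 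Yet the triple intersection has codimension $2$ rather than $3$, so no smooth chart $f:U\to L_m$ centred at a point of $\{u=v=0\}$ can exist: the three branches through such a point force $m=2$, whence $f$ would be \'etale with finite fibres, while $f^{-1}(p_0)$ must contain the one-dimensional triple intersection. The statement is only true because the definition in the cited source (\cite[2.16]{deJong}) additionally requires each nonempty intersection of $r$ components to have codimension $r$ in $X$---a clause that has evidently been dropped in \cref{rss-def}. You must invoke that clause (or restore it as a hypothesis) at exactly this step; once you do, your regularity and dimension count, and hence the miracle-flatness argument, go through as written.
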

\begin{proof}
    \cite[Prop 1.3]{hartl}
\end{proof}
\begin{par}
    We use this fact in form of the following easy corollaries:
\end{par}
\begin{kor}
    \label{ss-inv-glatt}
    Let $X,Y$ be integral, flat, separated $S$-schemes of finite type and
    $f: X \to Y$ a smooth morphism.
    If $Y$ is regular strict semi-stable, then so is $X$.
\end{kor}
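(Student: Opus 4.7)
The plan is to apply the characterization \cref{ss-chara} in the reverse direction to $X$. The four hypotheses built into the definition of regular strict semi-stable (integral, flat, separated, of finite type over $S$) are assumed directly on $X$, so it suffices to verify the two conditions appearing in \cref{ss-chara}: smoothness of the generic fibre, and the local smooth presentation over some standard model $L_m$ at every closed point of $X_s$.

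Smoothness of $X_\eta$ follows immediately by composition: $Y_\eta$ is smooth over $K$ because $Y$ is regular strict semi-stable, and $f_\eta : X_\eta \to Y_\eta$ is smooth as the restriction of a smooth morphism, so $X_\eta \to \Spec K$ is smooth as a composition of two smooth morphisms.

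For the local structure, fix a closed point $x \in X_s$ and let $y := f(x)$. Since $f$ is of finite type and both $X_s$ and $Y_s$ are of finite type over the algebraically closed field $k$, the induced morphism $f_s : X_s \to Y_s$ sends closed points to closed points, so $y$ is a closed point of $Y_s$. Apply \cref{ss-chara} to $Y$ at $y$: there exist an open neighbourhood $V \subseteq Y$ of $y$, an integer $m \in \IN$, and a smooth morphism $g : V \to L_m$ mapping $y$ to the origin $p_0$. Then $U := f^{-1}(V)$ is an open neighbourhood of $x$ in $X$, and the composition
\[
g \circ f\evat{U} : U \to L_m
\]
is smooth and sends $x$ to $p_0$. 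By the other direction of \cref{ss-chara}, $X$ is regular strict semi-stable.

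There is no genuine obstacle in this argument; the only point requiring a moment of care is the preservation of closed points under $f_s$, which is needed in order for the hypothesis of \cref{ss-chara} (formulated at closed points) to be applicable to $Y$ at $y$. Everything else reduces to the stability of smoothness under base change and composition.
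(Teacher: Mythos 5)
Your proof is correct and follows exactly the route the paper intends: the corollary is stated as an ``easy corollary'' of \cref{ss-chara} with the proof omitted, and the intended argument is precisely the one you give (smoothness of $X_\eta$ by composition, plus composing $f$ with a local smooth presentation $V \to L_m$ of $Y$ and invoking the converse direction of \cref{ss-chara}). Your remark about closed points mapping to closed points is a correct and appropriately noted detail.
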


\begin{kor}
    \label{ss-chara-l}
    Let $X$ be a regular strict semi-stable $S$\nobreakdash-curve. 
    Then each closed point $x \in X_s$ is either smooth
    or has an open neighbourhood $U$ and an \'etale map
    \[ U \to \Spec{R[x_0,x_1]/(x_0x_1 - \pi)}. \]
\end{kor}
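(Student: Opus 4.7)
The plan is to apply Hartl's characterization \cref{ss-chara} directly to the closed point $x \in X_s$ and then constrain the parameter $m$ by a dimension count. Since $X$ is a regular strict semi-stable $S$-curve, we have $\dim X = 2$. \cref{ss-chara} supplies an open neighbourhood $U \subseteq X$ of $x$, an integer $m \in \IN$, and a smooth morphism
\[
    f \colon U \to L_m = \Spec{R[x_0, \ldots, x_m]/(x_0 \cdots x_m - \pi)}
\]
mapping $x$ to the origin $p_0$.

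Next I would perform a dimension count. The hypersurface $L_m \subseteq \IA^{m+1}_R$ is integral of Krull dimension $m+1$, hence $\dim_{p_0} L_m = m+1$. Smoothness of $f$ at $x$ then yields
\[
    2 = \dim_x U = \dim_{p_0} L_m + \dim_x f^{-1}(p_0) \geq m+1,
\]
which forces $m \in \{0,1\}$.

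It then remains to interpret these two cases. For $m=0$ the target is $L_0 = \Spec{R[x_0]/(x_0 - \pi)} \cong S$, so $f$ witnesses that $U$ is smooth over $S$ and in particular $x$ is a smooth point of $X$. For $m=1$ the morphism $f \colon U \to L_1$ is smooth of relative dimension $0$ between schemes of finite type over the Noetherian base $S$, hence locally of finite presentation, and therefore étale; this produces precisely the map claimed in the corollary. No serious obstacle arises: the only subtle points are the computation $\dim L_m = m+1$ and the invocation of the standard equivalence between being étale and being smooth of relative dimension zero for morphisms locally of finite presentation.
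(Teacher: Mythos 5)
Your proposal is correct and follows essentially the same route as the paper: invoke \cref{ss-chara} to get a smooth map $f\colon U \to L_m$, rule out $m \geq 2$ by dimension theory, and then read off smoothness over $S$ for $m=0$ and \'etaleness (smooth of relative dimension $0$) for $m=1$. You merely spell out the dimension count that the paper compresses into ``by dimension theory only $m=0$ and $m=1$ is possible.''
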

\begin{proof}
    According to \cref{ss-chara} there is a smooth morphism
    \[ f: U \to \Spec{R[x_0, \ldots, x_{m}]/(x_0 \cdot\cdots\cdot x_{m} - \pi)} \]
    of an open neighbourhood $U$ of $x$. By dimension theory only $m=0$ and $m=1$ is possible.
    If $m=0$, then $U \to S$ is smooth. If otherwise $m=1$, the smooth morphism $f$
    has relative dimension $0$ and is therefore \'etale.
\end{proof}
\begin{par}
    It follows from \cref{ss-chara-l} that for \'etale local questions
    we can restrict ourselves to the model scheme
    $L=L_1:=\Spec{R[x_0,x_1]/(x_0 x_1 - \pi)}$. 
    The special fibre of this scheme consists of two components, which have a proper
    intersection in one point, the ``origin'' given by the ideal $(x_0,x_1)$.
\end{par}

\begin{par}
    Let $X$ be a regular strict semi-stable scheme. 
    A set of pairwise different components
    of the special fibre $X_s$ intersects properly by \cref{rss-def}(iv) 
    and we will show in \cref{chow-semi-eigtl}
    that this is an intersection of multiplicity $1$
    (in the sense of intersection theory).
    We may thus expect that a part of the Chow group is determined 
    only by the incidence relations between the components.
    To get better functorial properties we endow these incidence relations 
    with the structure of a simplicial set, the simplicial reduction set.
\end{par}
\begin{par}
    For this definition it is necessary to choose 
    a total ordering on $X_s^{(0)}$, 
    the components of the special fibre. 
    This is a transitive, antisymmetric and reflexive relation $\leq$,
    by which each two elements $C_1,C_2 \in X_s^{(0)}$ are comparable.
    Furthermore we employ the usual definitions from the theory of simplicial sets: 
    By $\Delta$ we denote the simplicial category, this is the category consisting
    of the ordered sets $[n]:=\{0, \ldots n\}$ for each $n \in \IN_0$
    as objects
    and monotonically increasing maps as morphisms.
    Some basic facts about partial orders and the simplicial category 
    are outlined in \cref{sk-kap}.
\end{par}

\begin{defn}
    \label{rss-def-beta}
    Let $X$ be a regular strict semi-stable $S$-scheme and $\leq$ a total ordering
    on $X_s^{(0)}$. 
    For each morphism of ordered sets $\beta: [n] \to X_s^{(0)}$, 
    i.e., a monotonically increasing map, we denote the scheme-theoretic intersection
    \[ 
        [\beta] := \beta(0) \cap \cdots \cap \beta(n)
    \]
    by $[\beta]$.
\end{defn}
\begin{bem}
    Let $X,\leq$ be as above, $\beta: [m] \to X_s^{(0)}$ a morphism of ordered sets
    and $f: [n] \to [m]$ a morphism of the simplicial category $\Delta$.
    Then
    \[ [\beta] \subseteq [\beta \circ f] \]
    holds.
    Since $[\beta]$ is smooth over $k$ (\cref{rss-def} (iv)), 
    the irreducible components $[\beta]^{(0)}$ 
    are actually connected components (\cite[Cor 4.2.17]{liu})
    and therefore there is a canonical morphism
    \begin{equation}
        \label{rk-ss-abb}
        f_\beta: [\beta]^{(0)} \to [\beta \circ f]^{(0)}
    \end{equation}
    which maps each point from $[\beta]^{(0)}$ onto its containing
    connected component from $[\beta \circ f]$.
\end{bem}
\begin{defn}
    \label{rss-def-rk}
    \begin{par}
        Let $X$ be a regular strict semi-stable scheme on $S$ and $\leq$ a total ordering
        on $X_s^{(0)}$.
        The \emph{simplicial reduction set} of $X$ is the simplicial set
        $\RK(X): \Delta \to \set$ defined on objects $[n] \in \Delta$ by
        \[ \RK(X)_n := \RK(X)([n]) := 
            \coprod_{\beta \in \hom([n], X_s^{(0)})}
            [\beta]^{(0)}
        \]
        and on morphisms $f: [n] \to [m]$ by
        \[ 
            \RK(X)(f) = 
            \left[
                \coprod_{\beta \in \hom([m], X_s^{(0)})}
                f_\beta
            \right]:
            \RK(X)_m \to \RK(X)_n.
        \]
        In the last equation $f_\beta$ is the map from \cref{rk-ss-abb}.
    \end{par}
    \begin{par}
        If $\dim(X)=2$, i.e., $X$ is a $S$\nobreakdash-curve,
        we call $\RK(X)$ also \emph{reduction graph} and denote it by $\Gamma(X)$.
    \end{par}
\end{defn}


\begin{par}
    A simplicial set is determined easily if each simplex is uniquely given 
    by its vertices. We call these \emph{simplicial sets without multiple simplices}
    (compare \cref{sk-einfach-kompl}).
    We may tests this property using the following criterium:
\end{par}
\begin{prop}
    \label{ss-redkomp-einfach}
    \begin{par}
        Let $X$ be a regular strict semi-stable $S$\nobreakdash-scheme with a total ordering $\leq$
        on $X_s^{(0)}$. The simplicial set $\RK(X)$ is a simplicial set without multiple
        simplices iff for each set $\{C_1, \ldots C_k\}$ of components of $X_s$
        the intersection $C_1 \cap \cdots \cap C_k$ is connected.
    \end{par}
    \begin{par}
        In this case there is a canonical bijection
        \begin{equation}
            \label{ss-redkomp-einfach-iso}
            \RK(X)_k
            \simeq \{ C_0 \leq \cdots \leq C_k \mid C_0, \ldots, C_k \in \RK(X)_0, 
                C_0 \cap \cdots \cap C_d \neq \emptyset\}
        \end{equation}
        between the $k$\nobreakdash-simplices and ascending chains of components in $(X_s)^{(0)}$
        with non-empty intersection.
    \end{par}
\end{prop}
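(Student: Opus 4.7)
The plan is to unwind the definition of a \emph{simplicial set without multiple simplices} (see \cref{sk-einfach-kompl}) in the present setting, translate the resulting injectivity statement into a geometric condition on intersections of components, and then read off the bijection directly from \cref{rss-def-rk}.

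First, I would recall that $Y$ is said to be without multiple simplices precisely when the total vertex map $Y_k \to Y_0^{k+1}$, assembled from the $k+1$ maps induced by the inclusions $\iota_i: [0] \to [k]$, $0 \mapsto i$, is injective. In our case a $k$-simplex is a pair $(\beta, Z)$ with $\beta: [k] \to X_s^{(0)}$ monotone and $Z$ an element of $[\beta]^{(0)}$; by the construction of the face maps in \cref{rk-ss-abb}, the $i$-th vertex of $(\beta, Z)$ is just $\beta(i)$. Consequently the total vertex map is injective iff, for every monotone $\beta$, the fibre $[\beta]^{(0)}$ contains at most one element.

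Next, I would observe that $[\beta] = \beta(0) \cap \cdots \cap \beta(k)$ depends only on the underlying set $\{\beta(0), \ldots, \beta(k)\}$ of components, since repetitions do not change a scheme-theoretic intersection. By \cref{rss-def}(iv) this intersection is smooth over $k$, so \cite[Cor 4.2.17]{liu} (already invoked in the remark preceding \cref{rss-def-rk}) shows that its connected components are precisely its irreducible components; hence $\card [\beta]^{(0)}$ equals the number of connected components of $[\beta]$. The condition $\card [\beta]^{(0)} \leq 1$ is therefore equivalent to $[\beta]$ being empty or connected. Ranging over all monotone $\beta$ and reducing to the case of strictly ascending $\beta$ yields exactly the stated property: $C_1 \cap \cdots \cap C_k$ is connected (or empty) for every subset $\{C_1, \ldots, C_k\} \subseteq X_s^{(0)}$.

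For the bijection \cref{ss-redkomp-einfach-iso}, once this equivalence is established, a $k$-simplex $(\beta, Z)$ is determined by $\beta$ alone, and the contributing $\beta$ are exactly those with $[\beta] \neq \emptyset$. Identifying a monotone map $\beta: [k] \to X_s^{(0)}$ with the ascending chain $\beta(0) \leq \cdots \leq \beta(k)$ produces the required bijection. I do not expect a genuine obstacle here; the only point demanding care is the translation of the abstract notion from \cref{sk-einfach-kompl} into injectivity of the total vertex map, after which everything reduces to bookkeeping on monotone maps together with the smoothness input used to pass from connected to irreducible components of $[\beta]$.
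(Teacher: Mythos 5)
Your argument is correct and follows essentially the same route as the paper: identify the vertices of a simplex $(\beta,p)$ as $\beta(0),\ldots,\beta(k)$, observe that the simplex is determined by its vertices precisely when $[\beta]^{(0)}$ has at most one element, and use smoothness of $[\beta]$ to identify irreducible components with connected components. The only cosmetic difference is that you phrase ``without multiple simplices'' as injectivity of the vertex-tuple maps $\RK(X)_k \to \RK(X)_0^{k+1}$ rather than via the paper's map of nondegenerate simplices into $\mathcal{P}(\RK(X)_0)$; since every vertex tuple here is an ascending chain for a total order, the two conditions reduce to the same requirement on $[\beta]^{(0)}$, so nothing is lost (and your explicit treatment of the empty intersection is in fact slightly more careful than the paper's).
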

\begin{proof}
    Let $\sigma \in \RK(X)_n$ be an $n$-simplex of the reduction complex.
    It is given by a pair 
    $(\beta,p)$ with $\beta: [n] \to X_s^{(0)}$ and $p \in [\beta]^{(0)}$.
    An easy computation shows that the vertices of $\sigma$ are given by
    $\beta(0), \ldots \beta(n)$.
    The simplex $\sigma$ is therefore uniquely determined by its vertices
    iff $[\beta]^{(0)}$ is a singleton, which means $[\beta]$ is connected.
    The bijection in \cref{ss-redkomp-einfach-iso} is then given by
    \[
        \sigma = (\beta,p) \mapsto (\beta(0) \leq \cdots \leq \beta(n)).
    \]
\end{proof}

\begin{bem}
    From now on we restrict ourselves to regular strict semi-stable schemes
    having a reduction set without multiple simplices,
    since \cref{ss-redkomp-einfach} gives a comfortable description 
    of the reduction set.
    At least for curves this restriction is not essential:
    By a suitable base change $S_n \to S$ and a subsequent desingularization
    each regular strict semi-stable $S$\nobreakdash-curve can be transformed
    into a regular strict-semi-stable $S_n$\nobreakdash-curve 
    without multiple simplices.
    This process is recalled in \cref{desi}.
\end{bem}
\begin{bsp}
    The affine $S$\nobreakdash-scheme $L_m:=\Spec{R[x_0, \ldots x_m]/(x_0 \cdot\cdots\cdot x_n - \pi)}$
    from \cref{rss-std-bsp} is regular strict semi-stable having a simplicial reduction set
    without multiple simplices.
    The components of the special fibre of $L_m$ are given by the
    ideals $(x_i)$ (i=0, \ldots, m).
    We endow them with the order $(x_i) \leq (x_j)$ where $(i \leq j)$.
    Since every intersection of such components is connected and non-empty,
    the simplicial reduction set $\RK(L_m)$ is free of multiple simplices and 
    $\RK(L_m)_k = \Hom_{\Delta}([k], [m])$. 
    Therefore $\RK(L_m)$ is the standard-$m$-simplex $\Delta[m]$.
\end{bsp}
\begin{bsp}
    Let $X$ be a regular strict semi-stable $S$\nobreakdash-curve. 
    The simplicial reduction set $\Gamma(X)$ has dimension 1 and is therefore 
    an ordered graph.
    It is easy to see that $\Gamma(X)$ coincides with the usual definition
    of the reduction graph \cite[9.2]{neron}.
\end{bsp}

\begin{par}
    The simplicial reduction set is functorial for generic flat morphisms:
\end{par}
\begin{prop}
    \label{redkomp-pushforward}
    Let $X$ and $Y$ be regular strict semi-stable $S$\nobreakdash-schemes with
    total orderings $\leq_X$ resp. $\leq_Y$ on $X_s^{(0)}$ resp. $Y_s^{(0)}$.
    Let $f:X \to Y$ be a morphism which is flat in a open subset 
    which contains all generic points of $Y_s$.
    \begin{enumerate}[(i)]
        \item
            Then there is a morphism
            \begin{equation}
                \label{redkomp-pushforward-komp1}
                \begin{split}
                    f_*: X_s^{(0)} &\to Y_s^{(0)}, \\
                    C &\mapsto \overline{f(C)}.
                \end{split}
            \end{equation}
        \item
            If $\RK(X)$ and $\RK(Y)$ are without multiple simplices 
            and $f_*$ from
            \cref{redkomp-pushforward-komp1} preserves the order,
            we may extend $f_*$ to a morphism of simplicial sets by
            \begin{equation}
                \label{redkomp-pushforward-redkomp}
                \begin{split}
                    f_*: \RK(X) &\to \RK(Y),\\
                    (C_0 \leq_X \cdots \leq_X C_k) &\mapsto 
                    (\overline{f(C_0)} \leq_Y \cdots \leq_Y \overline{f(C_k)}).
                \end{split}
            \end{equation}
            In this definition the simplices of $\RK(X)$ and $\RK(Y)$ are
            given by the bijection \cref{ss-redkomp-einfach-iso} in
            \cref{ss-redkomp-einfach}.
    \end{enumerate}
\end{prop}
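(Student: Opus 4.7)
My plan is to prove part (i) first and then derive part (ii) as an essentially formal consequence. For (i), fix $C \in X_s^{(0)}$ with generic point $\eta_C$. Since $f$ is a morphism of $S$-schemes, $f(X_s) \subseteq Y_s$, so $\overline{f(C)}$ is an irreducible closed subset of $Y_s$, automatically contained in some component $D$ of $Y_s$. It suffices to show equality $\overline{f(C)} = D$, equivalently that $f(\eta_C) = \eta_D$, i.e., that $f(\eta_C)$ has codimension $1$ in $Y$.

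The main tool is the dimension formula for flat morphisms: at any point $\xi$ at which $f$ is flat, $\dim \mathcal{O}_{X,\xi} = \dim \mathcal{O}_{Y, f(\xi)} + \dim \mathcal{O}_{X_{f(\xi)}, \xi}$. Applied at $\xi = \eta_C$, the left hand side equals $1$ by \cref{rss-def}(iii) (since $C$ is a Cartier divisor on the regular scheme $X$), while $\dim \mathcal{O}_{Y, f(\eta_C)} \geq 1$ because $f(\eta_C) \in Y_s$ has codimension at least one; this forces $\dim \mathcal{O}_{Y, f(\eta_C)} = 1$, so $f(\eta_C)$ is a generic point of $Y_s$. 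The main obstacle is to justify that the dimension formula applies at $\eta_C$, i.e., that $\eta_C$ lies in the flat locus: the hypothesis provides an open $V \subseteq Y$ containing every $\eta_D$ such that $f$ is flat on $f^{-1}(V)$, and one must verify $\eta_C \in f^{-1}(V)$. I would treat this by pulling back the locally principal divisor $D$ via the flat restriction $f|_{f^{-1}(V)}$: this yields a Cartier divisor of pure codimension one on $f^{-1}(V)$ whose support is contained in $X_s$, so its generic points are of the form $\eta_{C'}$ for $C' \in X_s^{(0)}$, and a careful analysis identifies the given $C$ among these $C'$.

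For (ii), granted that $f_*$ is well-defined on vertices by (i) and preserves the chosen orderings by hypothesis, I would invoke the identification \cref{ss-redkomp-einfach-iso} from \cref{ss-redkomp-einfach} to reduce the construction of the simplicial map to a short combinatorial check. First, a chain $(C_0 \leq_X \cdots \leq_X C_k)$ with $\bigcap_i C_i \neq \emptyset$ maps to a chain whose intersection is non-empty, since any $x \in \bigcap_i C_i$ satisfies $f(x) \in \bigcap_i \overline{f(C_i)}$. Second, the vertex-wise extension commutes with the face and degeneracy operators, which act on chains by dropping or repeating entries — both operations trivially commute with applying $f_*$ entrywise. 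Together these checks show that $f_*$ as defined is indeed a morphism of simplicial sets.
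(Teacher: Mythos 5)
Your part (ii) is correct and is essentially the paper's argument: well-definedness follows because any point of $C_0\cap\cdots\cap C_k$ maps into $\overline{f(C_0)}\cap\cdots\cap\overline{f(C_k)}$, order-preservation is a hypothesis, and compatibility with faces and degeneracies is automatic for vertex-wise defined maps between simplicial sets without multiple simplices (\cref{sk-morph-mfs-eind}).

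In part (i) you have correctly isolated the crux, namely flatness of $f$ at the generic point $\eta_C$ of $C$, but the way you propose to establish it cannot work, and under your reading of the flatness hypothesis the statement is in fact false. You read the hypothesis as: $f$ is flat on $f^{-1}(V)$ for some open $V\subseteq Y$ containing all generic points of $Y_s$. Take $Y=\IP^1_R$ and $X=\Bl_{y_0}(Y)$ for a closed point $y_0\in Y_s$; one checks that $X$ is again regular strict semi-stable, and $f$ is an isomorphism over $V:=Y\setminus\{y_0\}$, so it is flat on $f^{-1}(V)$ and $V$ contains the generic point of $Y_s$ --- yet the exceptional component $E\subseteq X_s$ has $\overline{f(E)}=\{y_0\}\notin Y_s^{(0)}$. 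This also kills your proposed repair: since $E\cap f^{-1}(V)=\emptyset$, no analysis of the components of $f^*D$ on $f^{-1}(V)$ can ever exhibit $E$, so "identifying the given $C$ among these $C'$" is impossible in general. The hypothesis has to be read the way the paper's own proof uses it (the ``$Y_s$'' in the statement is evidently a slip for ``$X_s$''): the flat locus of $f$, an open subset of $X$, contains all generic points of $X_s$. With that reading $\eta_C$ lies in the flat locus by assumption, and your dimension-formula argument $\dim\Oo_{X,\eta_C}=\dim\Oo_{Y,f(\eta_C)}+\dim\Oo_{X_{f(\eta_C)},\eta_C}$ then does finish the proof, giving a legitimate alternative to the paper's route (which instead restricts to a neighbourhood of $\eta_C$ where $f$ is flat and uses that flat morphisms are generizing, so the maximal point $\eta_C$ of $X_s$ maps to a maximal point of $Y_s$). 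As written, however, the key step of your proof is a gap that your sketched strategy cannot close.
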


\begin{proof}
    \begin{par}
        Let $C \in X_s^{(0)}$ be a component of $X_s$. For the first claim 
        it suffices to show $\overline{f(C)} \in Y_s^{(0)}$.
        Let $p \in X_s$ be the generic point of $C$. After restricting to a neighbourhood
        of $p$ we may assume that $f$ is flat.
        Then also the base change $f': X_s \to Y_s$ is flat and by 
        \cite[IV, \S 2, Cor (2.3.5) (ii)]{ega42} the closure $\overline{\{f(p)\}}$
        is an irreducible component of $Y_s$.
    \end{par}
    \begin{par}
        For the second claim we have to show that
        \cref{redkomp-pushforward-redkomp} is well-defined. 
        Let $C_0 \leq_X \cdots \leq_X C_k$ be an ascending chain representing a
        $k$-simplex of $\RK(X)$, i.e., with $C_0 \cap \cdots \cap C_k \neq \emptyset$. 
        By assumption, 
        $\overline{f(C_0)} \leq_Y \cdots \leq_Y \overline{f(C_k)}$ also holds.
        Eventually each point of $C_0 \cap \cdots \cap C_k$ is mapped by $f$
        onto a point of $\overline{f(C_0)} \cap \cdots \cap \overline{f(C_k)}$,
        therefore the chain $(\overline{f(C_0)} \leq_Y \cdots \leq_Y \overline{f(C_k)})$
        represents an $k$-simplex of $\RK(Y)$.
    \end{par}
\end{proof}
\begin{par}
    We may determine the simplicial reduction set locally:
\end{par}
\begin{prop}
    \label{redkomp-lok-ber}
    Let $X$ be a regular strict semi-stable $S$\nobreakdash-scheme with total ordering $\leq$ 
    on $X_s^{(0)}$.
    \begin{enumerate}[(i)]
        \item
            If $U \subseteq X$ is an open subset,
            then $\leq_X$ induces a total ordering $\leq_U$ on $U_s^{(0)}$
            and there is a canonical monomorphism $\RK(U) \to \RK(X)$.
        \item
            Let $\mathcal{U}=\big(U_i\big)_{i \in I}$ be a covering system of open sets for $X$.
            If for each two sets $U_i, U_j \in \mathcal{U}$ the intersection 
            $U_i \cap U_j$ has a covering with sets from $\mathcal{U}$,
            then
            \[ \RK(X) = \colim\limits_{i \in I} \RK(U_i). \]
    \end{enumerate}
\end{prop}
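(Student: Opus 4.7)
The plan is to handle both parts using the chain characterisation of \cref{ss-redkomp-einfach}: in this picture a simplex of $\RK(X)$ is an ascending chain of components of $X_s$ with non-empty intersection, and morphisms of simplicial sets are easy to describe in these terms. For (i) I will construct the map $\RK(U)\to\RK(X)$ explicitly and show it is levelwise injective; for (ii) I will show that the canonical map $\colim_i \RK(U_i) \to \RK(X)$ induced by the maps from (i) is a bijection on $n$-simplices for every $n$.

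For part (i), I would first identify $U_s^{(0)}$ with $\{C \in X_s^{(0)} \mid C \cap U \neq \emptyset\}$ via $C' \mapsto \overline{C'}$, and pull back $\leq_X$ along this injection to define $\leq_U$. A chain $C'_0 \leq_U \cdots \leq_U C'_k$ in $U_s^{(0)}$ then maps to the chain of closures $\overline{C'_0}\leq_X\cdots\leq_X\overline{C'_k}$ in $X_s^{(0)}$; since the intersection in $X_s$ contains the intersection in $U_s$, non-emptiness is preserved, so this defines a morphism $\RK(U)\to \RK(X)$. Naturality with respect to face and degeneracy maps is automatic from the chain description, and injectivity on each simplicial degree follows from the injectivity of $U_s^{(0)}\hookrightarrow X_s^{(0)}$ together with the no-multiple-simplices hypothesis, which forces each simplex to be determined by its vertices.

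For part (ii), surjectivity on $n$-simplices is the easy direction: given a chain $C_0 \leq_X \cdots \leq_X C_n$ in $X_s^{(0)}$ with non-empty intersection, I pick a point $x$ of that intersection and some $U_i \ni x$; then $C_0 \cap U_i \leq_U \cdots \leq_U C_n \cap U_i$ lies in $\RK(U_i)_n$ and maps to the given simplex under the morphism from (i). For injectivity, suppose simplices $\sigma_i\in \RK(U_i)_n$ and $\sigma_j\in \RK(U_j)_n$ map to the same simplex of $\RK(X)$; their underlying chains of components agree after taking closures in $X$, and the intersection of these components meets $U_i \cap U_j$. The covering hypothesis then supplies some $U_k \in \mathcal{U}$ with $U_k \subseteq U_i \cap U_j$ containing such a meeting point; the resulting simplex in $\RK(U_k)_n$ maps into both $\RK(U_i)_n$ and $\RK(U_j)_n$, witnessing the identification of $\sigma_i$ and $\sigma_j$ in the colimit.

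The main obstacle is the injectivity step in (ii): the colimit of simplicial sets over this non-filtered diagram forces me to produce actual zig-zags of simplices in the covering diagram realising the identification, not merely to show that things have equal image in $\RK(X)$. This is precisely where the assumption that pairwise intersections of members of $\mathcal{U}$ are themselves covered by members of $\mathcal{U}$ becomes indispensable, and some care is needed in tracking which connected component of the intersection is selected by a simplex when passing between open subsets.
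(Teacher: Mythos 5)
Your argument follows the same route as the paper's: the universal property of the colimit supplies the comparison map, surjectivity is obtained by covering a point of the intersection by some $U_i$, and injectivity is obtained by producing a common $U_k\subseteq U_i\cap U_j$ from $\mathcal{U}$ containing a point of the intersection and lifting the simplex there to witness the identification in the colimit. The one step you should make explicit is why the intersection of the components meets $U_i\cap U_j$ at all: the paper does this by tracking the generic point $p$ of the connected (hence irreducible) component of the intersection selected by the simplex, which lies in every nonempty open subset of that component and therefore in both $U_i$ and $U_j$; phrasing the proof in terms of the datum $(\beta,p)$ rather than the chain description of \cref{ss-redkomp-einfach} also removes your implicit reliance on the no-multiple-simplices hypothesis.
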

\begin{proof}
    \begin{par}
        Claim (i) is an immediate consequence of the definition of $\RK(X)$. 
        We now show (ii).
        The universal property of the colimit yields
        a unique morphism 
        \begin{equation}
            \label{redkomp-lok-ber-mor}
            \varphi: \colim\limits_{i \in I} \RK(U_i) \to \RK(X),
        \end{equation}
        induced by the inclusions $U_i \to X$. 
        We have to show that this is an isomorphism.
    \end{par}
    \begin{par}
        Let $k \in \IN$ and $\sigma \in \RK(X)_k$ be an $k$-simplex. 
        By definition $\sigma$ is given by $\beta \in \Hom([n], X_s^{(0)})$ and 
        $p \in [\beta]^{(0)}$.
        Since $\mathcal{U}$ is a covering system for $X$, there is an $U \in \mathcal{U}$
        with $p \in U$ and thus $\sigma \in \RK(U)$. Morphism \cref{redkomp-lok-ber-mor}
        is therefore surjective.
    \end{par}
    \begin{par}
        To proof injectivity let $k \in \IN$, $U, U' \in \mathcal{U}$ and
        $\sigma \in \RK(U)_k$, $\sigma' \in \RK(U')_k$ be such that
        $\varphi(\sigma)=\varphi(\sigma')$.
        As before let $\sigma$ be given by a pair $(\beta, p)$ with
        $\beta \in \Hom([n],U_s^{(0)}), p \in [\beta]^{(0)}$
        and $\sigma'$ be given by $(\beta', p')$.
        Then $\varphi(\sigma)=\varphi(\sigma')$ means $p=p'$ as points of $X$.
        Thus we may choose a neighbourhood $U'' \in \mathcal{U}$ of $p$ 
        with $U'' \subseteq U \cap U'$.
        The point $p$ also defines a simplex $\sigma'' \in \RK(U'')$.
        For the inclusions $i: U'' \to U$ and $i': U'' \to U'$ we have
        \[
            i_*(\sigma'') = \sigma, \quad i'_*(\sigma'') = \sigma' 
        \]
        and therefore $\sigma$ and $\sigma'$ agree in $\colim_{i \in I}\RK(U_i)$.
    \end{par}
\end{proof}

\begin{defn}
    \label{redkomp-std-umg}
    Let $X$ be a regular strict semi-stable scheme and $p \in X$ a closed point.
    Then an open subset $U \subseteq X$ is called
    \emph{standard neighbourhood of $p$},
    if either $U \subseteq X_\eta$ or 
    there is a number $m \in \IN$ and a smooth morphism
    \[
        f: U \to L_m:=\spec{R[x_0, \ldots, x_m]/(x_0 \cdot\cdots\cdot x_m - \pi)}
    \]
    such that $f_*: \RK(U) \to \RK(L_m)$ from \cref{redkomp-pushforward}(ii)
    is a bijection and $f(p) = p_0 := (x_0, \ldots, x_m, \pi)$ holds.
\end{defn}
\begin{prop}
    \label{redkomp-lok-bij} 
    Let $X$ be a regular strict semi-stable scheme, $U \subseteq X$ an open subset
    and $p \in X_s \cap U$ a closed point in the special fibre.
    Then there exists a standard neighbourhood of $p$ contained in $U$.
\end{prop}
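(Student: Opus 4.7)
The plan is to combine Hartl's structure theorem (\cref{ss-chara}) with two successive shrinkings that remove unwanted irreducible components, and then to renumber the variables of $L_m$ so that the resulting smooth morphism is order-preserving. First I would apply \cref{ss-chara} at the closed point $p$ to obtain an open neighbourhood $V \subseteq X$ of $p$, an integer $m \in \IN$, and a smooth morphism $f \colon V \to L_m$ with $f(p) = p_0$. Replacing $V$ by $V \cap U$ I may assume $V \subseteq U$. For each $i \in \{0, \ldots, m\}$ the preimage $f^{-1}((x_i))$ is a smooth divisor on $V$ with only finitely many irreducible components (since $V$ is noetherian), exactly one of which, call it $C_i^{(p)}$, contains $p$. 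Removing the other components for every $i$ — a finite union of closed subsets disjoint from $p$ — produces an open neighbourhood of $p$ whose special fibre has precisely the $m+1$ components $C_0^{(p)}, \ldots, C_m^{(p)}$; flatness of $f$ gives $\overline{f(C_i^{(p)})} = (x_i)$.

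Next I would make the strata connected. By \cref{rss-def}(iv) applied to the regular strict semi-stable scheme just obtained, every intersection $\bigcap_{i \in S} C_i^{(p)}$ for $S \subseteq \{0, \ldots, m\}$ is smooth over $k$; its finitely many connected components are open-and-closed in it, and exactly one contains $p$. Discarding the others yields an open subset $V' \ni p$ on which every such intersection is connected and non-empty. By \cref{ss-redkomp-einfach}, $\RK(V')$ is without multiple simplices and its $k$-simplices correspond to ascending chains of the $C_i^{(p)}$'s with non-empty intersection; the analogous description of $\RK(L_m) = \Delta[m]$ together with the fact that every ascending chain in $L_{m,s}^{(0)}$ meets at $p_0$ shows that the set-bijection $C_i^{(p)} \mapsto (x_i)$ promotes to a bijection of simplicial sets $f_* \colon \RK(V') \to \RK(L_m)$ in the sense of \cref{redkomp-pushforward}(ii).

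Finally, to secure order-preservation I would compose $f$ with the automorphism of $L_m$ induced by the unique permutation of the coordinates $x_0, \ldots, x_m$ that identifies the standard ordering on $\{(x_0), \ldots, (x_m)\}$ with the ordering that $\{C_0^{(p)}, \ldots, C_m^{(p)}\}$ inherits from $\leq$. Such a permutation fixes the origin $p_0 = (x_0, \ldots, x_m)$, so the composite still sends $p$ to $p_0$, and by construction the new $f_*$ preserves the order. The main technical point is essentially bookkeeping: one has to verify that each of the two shrinkings removes a closed subset disjoint from $p$ and that smoothness of $f$ survives every restriction; once this is granted, the proposition reduces to the concrete identification of $\RK(V')$ with $\Delta[m]$ afforded by \cref{ss-redkomp-einfach}.
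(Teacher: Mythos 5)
Your proof is correct and follows essentially the same route as the paper: apply \cref{ss-chara}, shrink twice so that every component of the special fibre passes through $p$ and every intersection of components is connected (the key point being that the connected component through $p$ of each smooth stratum is irreducible, so it stays connected under further shrinking), identify the reduction set with $\Delta[m]$ via \cref{ss-redkomp-einfach}, and conclude from bijectivity on the vertices using \cref{sk-morph-mfs-eind}. Your explicit permutation of the coordinates of $L_m$ to make $f_*$ order-preserving addresses a detail the paper leaves implicit and is a worthwhile addition.
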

\begin{proof}
    \begin{par}
        By \cref{ss-chara} there is an open subset $U'$ of $p$ and a smooth morphism
        \[ f: U' \to L_m \]
        with $f(p) = p_0$.
        We may shrink $U'$ such that each component of $U'_s$ contains $p$ 
        and $U' \subseteq U$ holds.
        By a further restriction we may assume that for each choice of irreducible
        components
        $C_0, \ldots, C_k \in (U')_s^{(0)}$ the intersection
        $C_0 \cap \cdots \cap C_k$ is connected.
        Thus the simplicial reduction set $\RK(U')$ is isomorphic 
        to the standard-$n$\nobreakdash-simplex 
        and has no multiple simplices.
        By \cref{sk-morph-mfs-eind} it suffices to show
        that $f$ induces a bijection $f_*: U_s^{(0)} \to (L_m)_s^{(0)}$
        on the $0$\nobreakdash-simplices:
    \end{par}
    \begin{par}
        Since $f$ is flat, the morphism $f_*$ exists by \cref{redkomp-pushforward}.
        To give the inverse map, let $C \in (L_m)_s^{(0)}$
        be an irreducible component of $(L_m)_s$. 
        By assumption $C$ is smooth and contains $p_0$.
        Therefore the scheme-theoretical preimage $f^{-1}(C)$ is
        a non-empty smooth $k$-scheme. This means, every irreducible component of 
        $f^{-1}(C)$ is a connected component of $f^{-1}(C)$. 
        Since $f$ is flat, each of these components is a component $U'_s$
        and therefore contains the point $p$.
        Thus $f^{-1}(C)$ consists of exactly one connected component of $U'_s$
	and the morphism $f_*$ is therefore bijective.
    \end{par}
\end{proof}
\begin{kor}
    \label{redkomp-std-ubd}
    Let $X$ be a regular strict semi-stable scheme. 
    Then every open subset $U \subseteq X$ can be covered
    by a system $(U_i)_{i \in I}$ of standard neighbourhoods.
    Therefore we may apply \cref{redkomp-lok-ber} (ii)
    on the covering
    \[ 
        \mathcal{U}:= \{ U \subseteq X \mid U \text{ is standard neighborhood
                of a closed point }
            p \in X\}
    \]
    and therefore
    \[ \RK(X) = \colim_{U \in \mathcal{U}} \RK(U) \]
    holds.
\end{kor}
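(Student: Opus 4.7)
My plan is to separate the corollary into two sub-claims: first, that every open $U \subseteq X$ can be covered by standard neighbourhoods, and second, that this covering property is exactly what is needed to apply \cref{redkomp-lok-ber}(ii) to $\mathcal{U}$ and obtain the colimit description of $\RK(X)$.

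For the covering claim, I would decompose $U$ as $(U \cap X_\eta) \cup (U \cap X_s)$. The piece $U \cap X_\eta$ is already a standard neighbourhood by the first alternative in \cref{redkomp-std-umg}, since it is contained in $X_\eta$. For the special fibre part, I apply \cref{redkomp-lok-bij} to each closed point $p \in U \cap X_s$, which produces a standard neighbourhood $V_p \subseteq U$ of $p$. The union $\bigcup_p V_p$ is an open subset of $X_s$ (intersected with $U$) containing every closed point of $U \cap X_s$; since $X_s$ is a scheme of finite type over the algebraically closed residue field $k$, closed points are dense in every open subset, so $\bigcup_p V_p \supseteq U \cap X_s$. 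Together with $U \cap X_\eta$ this yields the desired cover of $U$.

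For the colimit formula I would apply \cref{redkomp-lok-ber}(ii) to $\mathcal{U}$, the family of all standard neighbourhoods of $X$. The hypothesis to verify is that for $U_i, U_j \in \mathcal{U}$ the intersection $U_i \cap U_j$ can be covered by elements of $\mathcal{U}$. This is precisely the covering claim above applied to the open subset $U_i \cap U_j$ of $X$, noting that the conditions in \cref{redkomp-std-umg} are intrinsic to the open subscheme itself, so any standard neighbourhood contained in $U_i \cap U_j$ is automatically a standard neighbourhood of $X$. \cref{redkomp-lok-ber}(ii) then gives $\RK(X) = \colim_{U \in \mathcal{U}} \RK(U)$.

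The argument is essentially bookkeeping; the substance is contained in \cref{redkomp-lok-bij} (producing the neighbourhoods in the special fibre) and \cref{redkomp-lok-ber}(ii) (the colimit principle). The only point requiring care is the treatment of the generic fibre, which is not covered by standard neighbourhoods of closed points of $X$ (all such points lie in $X_s$) but is handled directly via the first alternative in the definition; I do not foresee any genuine obstacle beyond organizing these two cases.
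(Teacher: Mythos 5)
Your proof is correct and follows essentially the same route as the paper: cover the special-fibre part by standard neighbourhoods of closed points via \cref{redkomp-lok-bij}, use that closed points are very dense (Nullstellensatz) to see these cover all of $U_s$, adjoin $U_\eta$ for the generic fibre, and feed the resulting covering into \cref{redkomp-lok-ber}(ii). Your explicit check that intersections $U_i \cap U_j$ are again covered by elements of $\mathcal{U}$ is a detail the paper leaves implicit, but it is the same argument applied to the open set $U_i \cap U_j$.
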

\begin{proof}
    Let $U \subseteq X$ be an open subset. 
    We have to show that $U$ can be covered by standard neighbourhoods.
    Denote by $I$ the set of closed points of $U_s$. For each point $p \in I$ 
    we choose a standard neighbourhood $U_p \subseteq U$ by \cref{redkomp-lok-bij}.
    Then $\{ u_p \mid p \in I\} \subseteq \mathcal{U}$ 
    is an open covering of all closed points of $U_s$
    and by Hilbert's Nullstellensatz also a covering of $U_s$.
    Since $U_\eta \in \mathcal{U}$ we get a covering of $U$ by
    \[ 
        U = U_\eta \cup \left( \bigcup_{p \in I} U_p \right).
    \]
\end{proof}




\section{Desingularization} 
\label{desi}

\begin{par}
    Let $S:=\spec{R}$ be the spectrum of a complete discrete valuation ring
    with algebraically closed residue field $k$ and $X$ be
    a regular strict semi-stable $S$-scheme of dimension $2$,
    i.e., a $S$-curve.
    In this section we describe canonical desingularization methods
    for two important product situations:
\end{par}
\begin{par}
    For the first situation let $K_n/K$ be an algebraic field extension of $K=\quot{R}$
    of degree $n$ and $R_n$ the ring of integers in $K_n$.
    We will recall a desingularization for the product $X \times_S \spec{R_n}$.
    The second situation is the $k$-fold product $X^k$.
\end{par}
\begin{par}
    In both cases these desingularizations are well-known: 
    The base change $X \times_S \spec{R_n}$ is well-known for curves (\cite{delmum}).
    In \cite{gross}, Gross and Schoen investigate 
    products of regular strict semi-stable schemes
    in general. The same procedure is later described by Hartl \cite{hartl}
    together with a desingularization for $X \times_S \spec{R_n}$, where
    $X$ is any regular strict semi-stable scheme.
    In both techniques the order of the blow-ups used for desingularization
    is left open and therefore the result is not unique.
    A different order of blow-ups yields a different scheme and in general
    a different simplicial reduction set.
\end{par}
\begin{par}
    Since the order of blow-ups does not matter for curves,
    we can use for the situation $X \times_S \spec{R_n}$ with $X$ a $S$-curve
    the known method (\cite{delmum}, \cite{heinz}) 
    unmodified. For the product situation we use an adjusted form
    of the method described in \cite[Prop 6.11]{gross} and \cite[Prop 2.1]{hartl}.
\end{par}

\subsection{Ramified Base-Change of Curves} 
\label{desi-vbw-kap}

\begin{par}
    We begin with the case of ramified extensions. 
    Let $K_n/K$ be an algebraic field extension of $K=\quot{R}$
    of degree $n$ and $R_n$ the ring of integers in $K_n$. 
    Let $X$ be a regular strict semi-stable curve over $S=\spec{R}$. 
    The base change $X \times_{S} \spec{R_n}$ is not regular in general,
    but we can desingularize it in the same way as minimal models are constructed
    after base change (\cite[p. 3]{heinz}). 
    It turns out that this desingularization turns the reduction graph $\Gamma$ 
    into the $n$-fold subdivison $\unt_n(\Gamma)$.
\end{par}
\begin{satz}
   \label{desi-vbw}  
    Let $S:=\spec{R}$ be the spectrum of a complete discrete valuation ring
    and $S_n:=\spec{R_n}$ the spectrum of the ring $R_n$ above.
    Let $X$ be a regular strict semi-stable $S$-curve 
    with a total ordering on $X^{(0)}$, whose simplicial reduction set $\Gamma(X)$ has no
    multiple simplices.
    Let $X_n$ be the scheme obtained by
    blowing up $X \times_S S_n$ successively in all singular points, 
    blowing up the resulting scheme successively in all singular points, and so on $n/2$ times.
    Then $X_n$ is a regular strict semi-stable $S_n$ curve
    with
    $(X_n)_{\eta_n} = (X_\eta) \times_{\spec K} \spec K_n$.
    Furthermore there exists a total ordering of $(X_n)^{(0)}$ such
    that there exists a canonical isomorphism of simplicial reduction sets
    \[
        \Gamma(X_n) \simeq \unt_n(\Gamma(X)).
    \]
\end{satz}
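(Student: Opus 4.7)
The plan is to work étale-locally on $X$ via \cref{ss-chara-l}, perform an explicit blow-up computation at each node of the base-changed scheme, and then glue the result back together using \cref{redkomp-lok-ber}. Every closed point of $X_s$ is either a smooth point of $X/S$ or admits an étale neighbourhood mapping to $L=\Spec R[x_0,x_1]/(x_0x_1-\pi)$. At a smooth point the base change $X\times_S S_n$ is itself smooth over $S_n$, so no singular point appears there and no blow-up is performed; such a vertex of $\Gamma(X)$ survives unchanged as a vertex of $\Gamma(X_n)$, which matches the behaviour of $\unt_n$ on vertices.

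The main computation is at a node. Writing $\pi=u\pi_n^n$ with $u\in R_n^\times$, where $\pi_n$ is a uniformiser of $R_n$, the base change of the local model reads
\[
    L\times_S S_n \iso \Spec R_n[x_0,x_1]/(x_0x_1 - u\pi_n^n),
\]
an isolated $A_{n-1}$-singularity at the origin. I would verify by direct chart computation that blowing up this closed point produces an exceptional $\IP^1_k$ and that, on each of the two standard affine charts of the blow-up, the total space is locally isomorphic to $\Spec R_n[y_0,y_1]/(y_0y_1 - u'\pi_n^{n-2})$ for some unit $u'$, i.e.\ an $A_{n-3}$-singularity. Iterating this roughly $n/2$ times peels two units of ramification per step and terminates with a chain of $n-1$ new components, each isomorphic to $\IP^1_k$, strung out between the strict transforms of the two original components, each pair of consecutive components meeting transversally in a single smooth point. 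Regular strict semi-stability of the resulting scheme follows by checking each clause of \cref{rss-def} from the explicit local descriptions at every intermediate stage, and since all blow-up centres lie in the special fibre, the generic fibre is untouched, giving $(X_n)_{\eta_n}=X_\eta\times_{\spec K}\spec K_n$.

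To glue, observe that the singular points at each stage form a finite discrete closed subscheme of the special fibre, so blow-ups centred at distinct such points commute; the globally defined $X_n$ is therefore well posed and covered by standard neighbourhoods of the type used above. By \cref{redkomp-std-ubd} its simplicial reduction set is the colimit of the local reduction sets, and the local picture identifies, edge by edge, the contribution of a node to $\Gamma(X_n)$ with the path of length $n$ replacing the corresponding edge of $\Gamma(X)$. Fixing a total ordering on $(X_n)^{(0)}$ that extends the given order on $X^{(0)}$ and orders each newly inserted chain monotonically between its endpoints yields the canonical isomorphism $\Gamma(X_n)\iso\unt_n(\Gamma(X))$.

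The main obstacle is the blow-up bookkeeping: one must track how the two affine charts at each step cover the exceptional divisor, verify that the strict transforms of the two original components remain smooth and disjoint from one another after inserting the chain, and count carefully that the procedure terminates after the claimed number of steps producing exactly $n-1$ new $\IP^1$-components. Everything else — flatness and separatedness under blow-up, commutation of blow-ups in disjoint loci, and compatibility with the simplicial colimit — is either standard or already provided by the results of the previous section.
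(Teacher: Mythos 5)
Your proposal follows essentially the same route as the paper: the paper defers the node-by-node resolution to the classical explicit chart computation for $A_{n-1}$ points and then constructs a total ordering making the identification with $\unt_n(\Gamma(X))$ order-compatible, and your fleshed-out local blow-up calculation plus gluing via the colimit description of the reduction set is exactly that argument, with your "monotone between the endpoints" ordering an equivalent substitute for the paper's lexicographic one. One inaccuracy in your sketch worth fixing: for $n\ge 3$ a single blow-up of the point of $x_0x_1=u\pi_n^{n}$ has exceptional divisor consisting of \emph{two} projective lines meeting in the new $A_{n-3}$ point, and only the chart in which $\pi_n$ generates the exceptional ideal carries that singularity (the other charts are already regular) --- though your final count of $n-1$ inserted components, and hence the identification with the $n$-fold subdivision, remains correct.
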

\begin{proof}
    \begin{par}
        The desingularization process and the structure of the reduction graph of $X_n$
        is classical, the proof can be done by an explicit calculation like in \cite[p. 3]{heinz}. 
        For the statement about the simplicial reduction sets, we have to define a suitable
        total ordering on $(X_n)^{(0)}$, this can be done in the following way:
        Since $\Gamma(X_n)$ and $\sd_n(\Gamma(X))$ are isomorphic as unordered graphs,
        there is a bijection $\varphi: (X_n)_s^{(0)} \to \sd_n(\Gamma(X))$ compatible 
        with the graph structure. This means that two components $C,C' \in (X_n)_s^{(0)}$
        intersect iff there is an edge between the vertices $\varphi(C), \varphi(C') \in
        \sd_n(\Gamma(X))_0$.
    \end{par}
    \begin{par}
        By \cref{sk-unt-def} we may identify the elements of $\sd_n(\Gamma(X))$ with a subset of
        $(\Gamma(X)_0)^n = (X_s^{(0)})^n$. The lexicographical order on $(X_s^{(0)})^n$ 
        induces then a total ordering on $\sd_n(\Gamma(X))_0$ and by $\varphi$ also
        an total ordering on $(X_n)^{(0)}$. With this ordering, $\varphi$ is also 
        a morphism of simplicial sets.
        For details see \cite[Prop A.27, Lemma 2.6]{me}. 
    \end{par}
\end{proof}


\subsection{Desingularization of Products} 
\label{desi-aufprod}
\begin{par}
    Let $X$ be a regular strict semi-stable $S$-curve with total ordering on $X^{(0)}$.
    The product $X^d:=X \times_S \cdots \times_S X$ is in general
    not regular strict semi-stable. This can already be observed with 
    the standard scheme $X=L:=\spec{R[x_0,x_1]/(x_0x_1 - \pi)}$.
    We use a desingularization similar to \cite[Prop 6.11]{gross} and \cite{hartl}
    to get a regular strict semi-stable model $W(X,<,d)$ of $(X_\eta)^d$.
    The result of this process depends on a sequence of components, 
    which is chosen arbitrarily in the citations above.
    Using the total ordering on $X^{(0)}$ we are able to
    define a canonical sequence and thus get a result 
    with a well-defined simplicial reduction set.
\end{par}
\begin{par}
    The desingularization works as follows:
\end{par}
\begin{algo}
    \label{desi-prodkomp}
    Let $d \in \IN$ and $X$ be a regular strict semi-stable $S$-curve with total ordering $\leq$
    on $X_s^{(0)}$ and $\Gamma(X)$ a simplicial set without multiple simplices.
    We denote the product by $W_0 := X^d$. Since the components of $X_s$ are 
    geometrically integral, we can describe the irreducible components of $(W_0)_s$
    as product
    \[
        (W_0)_s^{(0)} = X_s^{(0)} \times \cdots \times X_s^{(0)}.
    \]
    We endow this product $(W_0)_s^{(0)}$ with the lexicographical order and denote the
    elements in ascending order $B_1, \ldots B_k$.
    Now denote by $B'_1$ the irreducible component $B_1$ endowed with the induced reduced
    structure and
    set $W_1:=\Bl_{B'_1}(W_0)$.
    Inductively let $B'_i \subseteq W_i$ be the strict transform of the
    irreducible component $B_i$ endowed with the induced reduced structure
    and set $W_{i+1}:=\Bl_{B'_i}(W_i)$.
    The last scheme in this chain, $W_{k}$, is also denoted by
    $W(X,\leq,d):=W_k$.
    These blowups introduce no new components in the special fibre $(W_k)_s$,
    so the lexicographical ordering on $(W_0)_s^{(0)}$ also induces 
    a total ordering on $(W_k)_s^{(0)}$.
\end{algo}
\begin{satz}
    \label{desi-prodkomp-ex}
    The scheme $W(X,\leq,d)$ constructed in \cref{desi-prodkomp} is regular strict semi-stable
    and the reduction set with respect to the lexicographical ordering induced by
    $(W_0)_s^{(0)}$ is given by
    $\RK(W) = \Gamma(X)^d$.
\end{satz}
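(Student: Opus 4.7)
The plan is to prove both claims simultaneously by an étale-local analysis, reducing everything to the single model case $X = L := \spec{R[x_0,x_1]/(x_0x_1-\pi)}$, and then gluing via the descent property of the simplicial reduction set in \cref{redkomp-lok-ber}. By \cref{ss-chara-l}, each closed point of $X_s$ has an étale neighbourhood mapping either smoothly to $S$, in which case the product is already regular strict semi-stable by \cref{ss-inv-glatt} and the blow-up sequence is trivial in those charts, or étale to $L$. Taking $d$-fold products and using that a smooth blow-up centre commutes with smooth (in particular étale) base change, the construction of \cref{desi-prodkomp} on $X^d$ pulls back to the analogous construction on $L^d$ through the chart $U^d \to L^d$, provided one checks that the lexicographical ordering on $(X_s^{(0)})^d$ restricts to the lexicographical one on $(U_s^{(0)})^d$ and projects compatibly to the lexicographical one on $(L_s^{(0)})^d = \{0,1\}^d$. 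This reduction allows me to concentrate on $X = L$.

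For $X = L$ the special fibre of $W_0 = L^d$ has exactly $2^d$ irreducible components $C_v$ indexed by $v \in \IF_2^d$, where $C_v$ is cut out by $x_{1,v_1},\ldots,x_{d,v_d}$, and the lexicographic order totally orders them. I would then prove by induction on the number $i$ of blow-ups already performed the following local structure statement: around every closed point $p \in (W_i)_s$ there is an étale neighbourhood of the form $\spec{R[y_0,\ldots,y_m]/(y_0\cdots y_m - \pi)}$ times a smooth factor, for some $m$ depending on $p$, and the components of $(W_i)_s$ through $p$ correspond to a precisely described subset of vertices of the cube $\Gamma(L)^d$. From this one obtains regular strict semi-stability of each $W_i$ via \cref{ss-chara} and \cref{ss-inv-glatt}, and in particular that the next blow-up centre, being the strict transform of a $C_v$ with its reduced structure, is again smooth so that the induction can continue.

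Once this local structure is established, the identification $\RK(W(L,\leq,d)) = \Gamma(L)^d$ is purely combinatorial. By \cref{ss-redkomp-einfach} it suffices to describe the set of nonempty intersections of components of $(W_k)_s$. The local charts above identify each such intersection with an increasing chain in $\IF_2^d$ under the componentwise partial order refined by lexicographic tie-breaking, which is exactly the set of simplices of the standard triangulation of the $d$-cube used to define $\Gamma(L)^d$. For the surjectivity one exploits the inductive product structure: the components $C_v$ with $v_1 = 0$ are contained in the Cartier divisor $V(x_{1,0}) \cong L^{d-1}$, and the first half of the blow-up sequence on $L^d$ restricts to the analogous sequence in one dimension less, so that every simplex arising in $\Gamma(L)^{d-1}$ is produced. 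Gluing the local descriptions for general $X$ via \cref{redkomp-lok-ber} then yields $\RK(W(X,\leq,d)) = \Gamma(X)^d$.

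The main obstacle is the inductive local blow-up analysis in the cube case: one must verify explicitly that each blow-up centre is smooth at the relevant stage, and that in suitable affine coordinates the blow-up transforms a chart of the form $x_{1,0}x_{1,1} = \cdots = x_{d,0}x_{d,1} = \pi$ into charts of the same standard semi-stable type $y_0\cdots y_m = \pi$ with the expected set of new components. This bookkeeping, parametrised by the lexicographic order, is the combinatorial heart of the argument; all other steps (reduction to the local model, regularity, semi-stability, identification of the reduction complex) follow formally once this local description is in hand.
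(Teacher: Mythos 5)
Your overall architecture (reduce to $X=L$ by smooth/\'etale localisation and compatibility of blow-ups with flat base change, analyse the blow-up sequence on $L^d$ chart by chart, then identify $\RK(W)$ combinatorially via \cref{ss-redkomp-einfach} and glue with \cref{redkomp-lok-ber}) is the same as the paper's. But the inductive local structure statement you propose to prove is false, and it is precisely the step you identify as the heart of the argument. You claim that around every closed point of $(W_i)_s$ there is an \'etale neighbourhood of the form $\spec{R[y_0,\ldots,y_m]/(y_0\cdots y_m-\pi)}$ times a smooth factor, and you deduce that \emph{each} $W_i$ is regular strict semi-stable. This already fails for $i=0$: the scheme $W_0=L^d$ is not regular for $d\geq 2$ (at the product of the origins the tangent space has dimension $2d$ while $\dim L^d = d+1$), which is the whole reason a desingularization is needed. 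It also fails at intermediate stages: for $d=3$, the blow-up $W_1=\Bl_{B_1'}(L^3)$ is covered by charts of the form $\spec{R[w_1][u_{a0},u_{a1},u_{b0},u_{b1}]/(w_1u_{a0}u_{a1}-\pi,\ u_{a0}u_{a1}-u_{b0}u_{b1})}$, whose local ring at the origin has embedding dimension $5$ but Krull dimension $4$. So the induction as you set it up cannot close: you cannot invoke \cref{ss-chara} or \cref{ss-inv-glatt} for the intermediate $W_i$, and your route to the smoothness of the next blow-up centre (via regular strict semi-stability of $W_i$) collapses, even though the centres do happen to be smooth.

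The paper's fix is to carry the singular intermediate schemes through the induction explicitly: it introduces the model algebras $M(l,A)$ with the extra relations $u_{a0}u_{a1}=u_{a'0}u_{a'1}$, proves in \cref{de-lokblup-lem} how blowing up the component $C(0)$ of $\spec{M(l,A)}$ produces charts $\spec{M(l+1,A\setminus\{a\})}$, and tracks in \cref{desi-prodkomp-lok} a covering of each $W_m$ by such charts indexed by unrefinable chains of components. Only at the final stage $m=2^d$, when every index set $A$ has become a singleton, do the charts degenerate to the standard semi-stable form $R[w_1,\ldots,w_{l+2}]/(w_1\cdots w_{l+2}-\pi)$, at which point \cref{ss-chara} applies. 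Your combinatorial endgame (nonempty intersections of components correspond to chains in $\IF_2^d$ for the product partial order, matching the simplices of $\Delta[1]^d$ by \cref{sk-prod-poset}) and your globalisation step are both fine; what is missing is a correct description of the intermediate singular schemes, without which neither the smoothness of the centres nor the regularity of the final result is established.
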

\begin{par}
    Before we proof this fact, let us first assume that 
    $X$ is the standard scheme
    $X=L:=\spec{R[x_0,x_1]/(x_0x_1 - \pi)}$.
    In this case we may describe all schemes $W_i$ in \cref{desi-prodkomp}
    explicitly by an affine covering. In this covering we are able to check the 
    claims easily.
    The covering schemes are spectra of the following algebras:
\end{par}
\begin{defn}
    Let $l \in \IN_0$ be a natural number and $A$ a non-empty set. 
    We define a $R$-algebra $M(l,A)$ by
    \[
        M(l,A) := R[w_1, \ldots w_l][u_{a,0},u_{a,1} \mid a \in A]/ \IId_M,
    \]
    the quotient of a free commutative algebra generated by the elements
    $w_1, \ldots w_l$ and $\{u_{a,0},u_{a,1} \mid a \in A\}$
    by the Ideal $\IId_M$, which is generated by
    \[
        \IId_M := \left\{ 
            w_1 \cdot\cdots\cdot w_l u_{a0}u_{a1} - \pi, 
            u_{a0}u_{a1} - u_{a'0}u_{a'1} \mid a,a' \in A 
        \right\}.
    \]
\end{defn}
\begin{par}
    We first analyse the irreducible components and blow-ups of $\spec{M(l,A)}$.
\end{par}

\begin{lem}
    \label{de-lokblup-lem}
    Let $l \in \IN$ and $A$ be a finite set.
    \begin{enumerate}[(i)]
        \item
            The affine scheme $N:=\spec{M(l,A)}$ is integral. Its special fibre
            $N_s$ consists of the following components:
            For each $i \in \{1, \ldots l\}$ a Cartier divisor
            $D(i)$ given by the ideal $(w_i)$
            and
            for each mapping $t \in \Hom(A, \{0,1\})$ a component 
            $C(t)$
            given by the ideal 
            $(u_{k,t(a)} \mid a \in A)$.
        \item
            The blow-up $\tilde N := \Bl_{C(0)}(N)$ of $N$ at the component
            $C(0)$ corresponding to the zero mapping $0:A \to \{0,1\}$ 
            is covered by the family
            of affine schemes
            \[ \left( \tilde N_a:=\Spec(M(l+1, A\setminus\{a\}))\right)_{a \in A}.
            \]
        \item
            For each chart $\tilde N_a$ the irreducible components of 
            $(\tilde N_a)_s$ have the form $\tilde D_i$ with $i \in \{1, \ldots l+1\}$
            or $\tilde C(t)$ with $t \in \Hom(A \setminus\{a\}, \{0,1\})$ analogous to (i).
            Under the blow-up morphism $\psi_a: \tilde N_a \to N$ these components
            are mapped onto the following components of $N_s$:
            \begin{align*}
                \psi_a(\tilde D(i))   &= D(i) \quad \textrm{for $i \in \{1, \ldots, l\}$}, \\
                \psi_a(\tilde D(l+1)) &= C(0), \\
                \psi_a(\tilde C(t\mid_{A \setminus \{a\}}))  &= C(t) \quad \textrm{
                        for $t \in \Hom(A,\{0,1\})$ with $t(a)=1$}.
            \end{align*}
        \item
            Let $\psi_a$ be defined as above and denote by $E \in \cadiv(\tilde N)$ the
            exceptional divisor of the blow-up. 
            Then we have for any $\alpha \in A \setminus \{a\}$ the following 
            identities of principal Cartier divisors:
            \begin{align*}
                \psi_a^{-1}(\Div(u_{a,0}))E^{-1}\mid_{\tilde N_a} &= \Div(1), \\
                \psi_a^{-1}(\Div(u_{a,1}))      \mid_{\tilde N_a} &= \Div(\tilde
		u_{\alpha,0}\tilde u_{\alpha,1}), \\
                \psi_a^{-1}(\Div(u_{\alpha,0}))E^{-1}\mid_{\tilde N_a} &= \Div(\tilde
		u_{\alpha,0}), \\
                \psi_a^{-1}(\Div(u_{\alpha,1}))      \mid_{\tilde N_a} &= \Div(\tilde u_{\alpha,1}).
            \end{align*}
    \end{enumerate}
\end{lem}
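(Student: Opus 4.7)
The plan is to prove all four parts by explicit computation in the presentation of $M(l,A)$, using that the defining relations make $z := u_{a,0}u_{a,1}$ independent of $a \in A$ and give $\pi = w_1 \cdots w_l \, z$. For part (i), integrality follows by checking the generic fibre is irreducible: after inverting $\pi$ one can use $\pi = w_1\cdots w_l u_{a_0,0}u_{a_0,1}$ to solve for, say, $u_{a_0,1}$, exhibiting a dense open torus inside $\spec{M(l,A)} \times_S \spec{K}$. Combined with the fact that $\pi$ is visibly a non-zero-divisor (so $M(l,A)$ is $R$-flat), this gives integrality. To list the special-fibre components one sets $\pi = 0$: the equation $w_1\cdots w_l z = 0$ forces either some $w_i = 0$, giving the divisor $D(i)$, or $z = 0$, and in the latter case for each $a$ independently one of $u_{a,0}$, $u_{a,1}$ must vanish, yielding the components $C(t)$ for $t \in \Hom(A,\{0,1\})$. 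Each resulting quotient ring is itself isomorphic to some $M(l',A')$ up to renaming of variables, hence integral by the same argument.

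For part (ii), I compute the chart structure of $\Bl_{C(0)}(N) = \Proj \bigoplus_{n \geq 0} (u_{a,0} \mid a \in A)^n$ directly. In the standard $a$-chart $\tilde N_a$, where $u_{a,0}$ generates the blown-up ideal, set $\tilde u_{b,0} := u_{b,0}/u_{a,0}$ for $b \neq a$ and rename $w_{l+1} := u_{a,0}$. The relation $u_{a,0}u_{a,1} = u_{b,0}u_{b,1}$ then reads $u_{a,1} = \tilde u_{b,0}u_{b,1}$: this eliminates $u_{a,1}$ from the chart and is independent of the choice of $b$ thanks to the remaining relations $\tilde u_{b,0}u_{b,1} = \tilde u_{b',0}u_{b',1}$. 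The relation $w_1\cdots w_l u_{a,0}u_{a,1} = \pi$ becomes $w_1\cdots w_{l+1}\tilde u_{b,0}u_{b,1} = \pi$, so one reads off $\tilde N_a \iso \spec{M(l+1, A\setminus\{a\})}$. That the $\tilde N_a$ cover $\tilde N$ is the standard chart structure for a blow-up of an ideal with a finite generating set.

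For parts (iii) and (iv), applying (i) inside each $\tilde N_a$ gives the listed components $\tilde D(i)$, $\tilde D(l+1)$, and $\tilde C(t)$ directly. The map $\psi_a$ is encoded in the substitutions $\psi_a^*(u_{a,0}) = w_{l+1}$, $\psi_a^*(u_{b,0}) = w_{l+1}\tilde u_{b,0}$, $\psi_a^*(u_{a,1}) = \tilde u_{\alpha,0}u_{\alpha,1}$ for any $\alpha \in A\setminus\{a\}$, and $\psi_a^*(u_{b,1}) = u_{b,1}$. Substituting into the defining ideals yields the stated images: $\tilde D(i) \mapsto D(i)$ for $i \leq l$; $\tilde D(l+1) \mapsto C(0)$, since all $u_{b,0}$ vanish when $w_{l+1}$ does; and $\tilde C(t) \mapsto C(t')$ with $t'(a)=1$, because $u_{a,1} = \tilde u_{\alpha,0}u_{\alpha,1}$ automatically vanishes on $\tilde C(t)$. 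The four identities in (iv) then follow by direct substitution, using that the exceptional divisor restricts on $\tilde N_a$ to $\Div(w_{l+1})$. The main obstacle is really the chart identification in (ii): one must check carefully that the elimination of $u_{a,1}$ is consistent across different choices of $b \neq a$ and that the resulting presentation matches $M(l+1,A\setminus\{a\})$ on the nose, with no spurious or missing relations; once this is verified, the rest is bookkeeping.
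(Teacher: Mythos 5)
Your computation is correct and is precisely the explicit chart calculation that the paper itself delegates to Liu's Lemma 8.1.4 and to the author's thesis, so the approach coincides with the paper's. The one point you flag but leave open --- that the chart presentation has no spurious relations, i.e.\ that the surjection from $M(l+1,A\setminus\{a\})$ onto the chart ring inside the localization $A_{u_{a,0}}$ is injective --- closes by the same device you already use in part (i): $M(l+1,A\setminus\{a\})$ is $R$-flat with integral generic fibre equal to the common torus, hence injects into its generic fibre, on which the comparison map is an isomorphism.
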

\begin{proof}
    All these claims can be shown by explicite calculations just using 
    \cite[Lemma 8.1.4]{liu}. 
    For details see \cite[Lemma 2.14]{me}.
\end{proof}
\begin{par}
    Before we are able to return to the proof of \cref{desi-prodkomp-ex} with $X=L$ we
    need to introduce some additional notation. 
    On $W_0:=L^d$ we denote by $\pr_i: W_0 \to L$ the projection
    on the $i$-th factor.
    We denote the irreducible components of $L_s$ by $I_0, I_1$ with the
    obvious ordering $I_0 < I_1$. 
    Each component $C$ of $(W_0)_s$ is of the form
    $C = I_{t(1)} \times \cdots \times I_{t(d)}$
    for a mapping $t \in \Hom(\{1, \ldots d\}, \{0,1\})$. 
    We denote this mapping by $t_C$.
\end{par}
\begin{par}
    To apply \cref{desi-prodkomp} we arrange the components of $W_s$ lexicographically as
    $B_1, \ldots B_{2^d}$.
    In contrast to that we will also use the product order on $(W_s)^{(0)}$,
    which will be denoted by $\leq$:
    For two components $C,C' \in (W_s)^{(0)}$ set $C \leq C'$ iff
    for each $i$ the relation $t_C(i) \leq t_{C'}(i)$ holds.
    We denote by  $J:=\{C_1 < \cdots < C_{l+1}\} \subseteq \mathcal{P}(\RK(W_0)_0)$ the set of all strictly increasing chains with respect to $\leq$
    which cannot be refined. 
    For each $m \in \IN_0$ we define the subset $J^m$ by 
    \[ J^m := \left\{ (C_1 < \cdots < C_{l+1}) \in J \mid C_1 = \blc_1, 
    \{C_1, \ldots, C_{l} \} \subseteq \{\blc_1, \cdots \blc_m\} \right\}. \]
    Thus elements of $J^m$ are chains of components, which cannot be refined and 
    whose elements -- with the exception of the last element --
    are already blown-up in step $m$.
    In particular one gets $J^0 = \{(B_1)\}$.
\end{par}
\begin{lem}
    \label{desi-prodkomp-lok}
    Let $X=L$ and $W_m$ constructed as in \cref{desi-prodkomp}. 
    Then for each $m \in \{0, \ldots, 2^d\}$
    the following holds:
    \begin{enumerate}[(i)]
        \item
            The scheme $W_m$ is covered by the family of affine schemes
            \begin{equation}
                \label{desi-prodkomp-lok-ubd}
                \Big( \Spec{M(l, A(C_{l+1}))} \Big)_{ (C_1 < \cdots < C_{l+1}) \in J^m}
            \end{equation}
            where $A(C_l)$ denotes the subset
            $A(C):=t_C^{-1}(1) \subseteq \{1, \ldots d\}$.
            The irreducible components of $(W_m)_s$ are exactly the strict transforms
            of the components of $(W_0)_s$.
        \item
            In the chart $N(C_1, \ldots, C_{l+1}):=\Spec{M(l, A(C_{l+1}))}$ 
            corresponding to the chain 
            $(C_1 < \ldots < C_l < C_{l+1})$ the components $C_1, \ldots C_l$
            are given by the Cartier divisors $D(1), \ldots D(l)$.
            The chart also contains the strict transforms of all components
            $C \in (W_0)_s^{(0)}$ with $C \geq C_l$.
            Let $C \geq C_l$ be such a component, then 
            the strict transform of $C$ is given in the chart by
            $C(t_C \mid _{A(C_k)})$.
        \item
            Each centre of blow-up is the scheme-theoretic intersection
            of Cartier divisors.
            In particular the centre $B'_j$ is given by
            \[ 
                \blc'_j = \bigcap_{n=1}^d F_{j,n}
            \]
            with the Cartier divisors
            \[ 
                F_{j,n} := \pr_n^{-1}(\pr_n(\blc_j)) \sum_{i \leq m, \pr_n(\blc_i) =
                    \pr_n(\blc_j)} -\blc_i.
            \]
    \end{enumerate}
\end{lem}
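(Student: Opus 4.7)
The argument is by induction on $m$, with the key local input being \cref{de-lokblup-lem}, which describes the charts of $\Bl_{C(0)}(\Spec M(l, A))$ together with the behaviour of the relevant Cartier divisors under such a blow-up.

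For the base case $m = 0$, the scheme $W_0 = L^d$ agrees directly with the single affine chart prescribed by $J^0$: its coordinate ring matches the presentation of $M(0, A(\blc_1))$ for the initial chain. Claims (ii) and (iii) then follow by a direct reading of the coordinates of $L^d$; in particular, each irreducible component of $(W_0)_s$, being of product form $I_{t(1)} \times \cdots \times I_{t(d)}$, is the scheme-theoretic intersection $\bigcap_{n=1}^d \pr_n^{-1}(I_{t(n)})$, which gives (iii) for $\blc'_1$.

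For the inductive step from $m-1$ to $m$, I pass from the cover of $W_{m-1}$ to one of $W_m = \Bl_{\blc'_m}(W_{m-1})$ chart by chart. In each chart $N(C_1, \ldots, C_{l+1})$ of $W_{m-1}$ that meets $\blc'_m$, the inductive hypothesis (ii) identifies $\blc'_m \cap N(C_1, \ldots, C_{l+1})$ with the component $C(0)$ in the notation of \cref{de-lokblup-lem}(i). Applying \cref{de-lokblup-lem}(ii), the blow-up is then covered by charts $\Spec M(l+1, A(C_{l+1}) \setminus \{a\})$ for $a \in A(C_{l+1})$, each naturally matched with a refined chain $(C_1, \ldots, C_{l+1}, \blc_m, C_a) \in J^m$, where $C_a$ is the unique component of $(W_0)_s^{(0)}$ immediately above $\blc_m$ obtained by flipping the $a$-th coordinate. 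Charts of $W_{m-1}$ not meeting $\blc'_m$ carry over unchanged and correspond to the remaining chains in $J^m \cap J^{m-1}$.

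Claim (ii) in the new situation then follows from \cref{de-lokblup-lem}(iii), which identifies the new divisor $D(l+1)$ with the strict transform of $\blc_m$ and matches the $C(t)$ with the strict transforms of the components of $(W_0)_s$ dominating $\blc_m$ that meet the chart. Claim (iii) is obtained by iterating \cref{de-lokblup-lem}(iv): each previous blow-up of a component $\blc_i$ with $\pr_n(\blc_i) = \pr_n(\blc_j)$ contributes an exceptional divisor inside $\pr_n^{-1}(\pr_n(\blc_j))$, and subtracting these contributions yields precisely the divisor $F_{j,n}$, so that the scheme-theoretic intersection over $n = 1, \ldots, d$ recovers $\blc'_j$. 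The main obstacle is the combinatorial matching in the inductive step: one must set up cleanly the bijection between the indices $a \in A(C_{l+1})$ parametrising the new charts in \cref{de-lokblup-lem}(ii) and the chains in $J^m \setminus J^{m-1}$ refining a given chain from $J^{m-1}$, and check compatibility with the Cartier-divisor identifications. Once this bijection and the corresponding divisor identifications are in place, verifying (ii) and (iii) reduces to routine tracking through the blow-up sequence.
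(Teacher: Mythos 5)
Your proposal follows essentially the same route as the paper: induction on $m$ with the base case $W_0=L^d\simeq\Spec M(0,\{1,\ldots,d\})$, the inductive step carried out chart by chart via the local blow-up description in the cited lemma on $\Spec M(l,A)$, new charts matched to refined chains, and part (iii) obtained by tracking the divisor identities for the exceptional divisors. The only blemishes are notational (your refined chain $(C_1,\ldots,C_{l+1},\blc_m,C_a)$ lists the blown-up component twice, since it must already be the top element $C_{l+1}$ of the old chain for the centre to appear as $C(0)$ in that chart), and you leave the chain--chart bijection at the same level of detail as the paper itself does.
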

\begin{proof}
    \begin{par}
        We show (i) and (ii) by induction on $m$.
        The claim is trivial for $m=0$:
        We have $J^0=\{B_1\}$ and $L^d$ is indeed covered by the affine scheme
        $\spec{M(0, \{1, \ldots d\})} \simeq L^d$. 
        The isomorphism can be chosen in such a way 
        that for each $n \in \{1, \ldots d\}$ 
        the projection $\pr_n$ induces a morphism on the global sections
        \[
            \Oo_L(L) \to \Oo_{N(\blc_1)}(N(\blc_1)),
        \]
        which maps $x_0$ onto $u_{n,0}$ and $x_1$ onto $u_{n,1}$.
        Then (ii) follows immediately.
    \end{par}
    \begin{par}
        Assume now that (i) and (ii) are true for $m \geq 0$. 
        We have to examine the blow-up $B_{m+1}=\Bl_{B'_{m+1}}(W_m)$ 
        and can restrict ourself to the charts of $W_m$ which contain 
        the component $B'_{m+1}$.
        This means we consider chains $(C_1 < \cdots < C_k) \in J^m$ with $C_k = B_{m+1}$ 
        with their associated chart $N=N(C_1, \ldots, C_k)$. 
        Since $t_{C_k} \mid _{A(C_k)} = 0$ we can apply \cref{de-lokblup-lem} to describe the blow-up:
    \end{par}
    \begin{par}
        The blow-up $\tilde N$ of $N$ in $C_k$ is covered by the charts
        \[
            \spec M(k+1, A(C_k)\setminus \{l\}), l \in A(C_k).
        \]
        If we associate to each $l \in A(C_k)$ the component $C'$ 
        with $A(C') = A(C) \setminus \{l\}$ and hence an element
        $(C_0 < \cdots < C_l < C') \in J^{m+1}$, 
        this proves (i).
        By \cref{de-lokblup-lem} again a component $C'' > C$ lies in the chart $N(C_0, \cdots < C_l < C')$
        iff $t_{C''}(a) = 1$ holds, which is equivalent to $C'' > C'$.
    \end{par}
    \begin{par}
        For the description of the centres of blow-up we show inductively using
        \cref{de-lokblup-lem}
        that in each chart $N(C_1, \ldots, C_{l+1}) \setminus W_m$ 
        the following identities hold:
        \[
            F_n \mid_{N(C_1, \ldots, C_{l+1})} = \begin{cases}
                V(u_{n,0})          & \text{if } \pr_n(C) = 0, \pr_n(C_l) = 0, \\
                V(1)                & \text{if } \pr_n(C) = 0, \pr_n(C_l) = 1, \\
                V(u_{n,1})          & \text{if } \pr_n(C) = 1, \pr_n(C_l) = 0, \\
                V(u_{a',0}u_{a',1}) \text{ for any } a' \in A(C_l) & \text{if }
                \pr_n(C) = 1, \pr_n(C_l) = 1. \\
            \end{cases}
        \]
        If the blow-up centre $B_j$ suffices $B_j > C_k$, we get
        \[
            \bigcap_{n=1}^d F_{j,n} \mid_{N(C_1, \ldots, C_{l+1})}
            = V\big((u_{a,t_{\blc_j}(a)} \mid a \in A(C_l)\big) = \blc_j.
        \]
        If otherwise $B_j \not > C_k$, we have
        \[
            \bigcap_{n=1}^d F_k \mid_{N(C_1, \ldots, C_{l+1})} = \emptyset.
        \]
        The last claim then follows directly.
    \end{par}
\end{proof}
\begin{proof}[Proof of \cref{desi-prodkomp-ex}]
    \begin{par}
        First assume $X=L$.
        By \cref{desi-prodkomp-lok} the scheme $W(L,<,d):=W_{2^d}$ is covered
        by affine charts of the form
        \[
            \Spec{M(k, \{a\})} \simeq \Spec{R[w_1, \ldots, w_l, w_{l+1}, w_{l+2}] / 
            (w_1 \cdot\cdots\cdot w_{l+2} - \pi)} 
        \]
        and is therefore regular strict semi-stable according to \cref{ss-chara}.
    \end{par}
    \begin{par}
        It remains to find an isomorphism of simplicial sets 
        $\RK(W_{2^d}) \simeq (\RK(L))^d$. 
        \cref{ss-redkomp-einfach} ensures that the reduction sets of $L$ and $W_{2^d}$ contain no multiple
        simplices,
        thus by \cref{redkomp-pushforward} the projection morphisms $\tilde \pr_i: M_m \to L$ 
        induce morphisms $(\tilde \pr_i)_*: \RK(W_{2^d}) \to \RK(L)$. 
        By the universal property of the product they induce a morphism
        $\RK(W_{2^d}) \to \RK(L)^d$ and we have to show injectivity and surjectivity.
        On the level of $0$-simplices this is trivial, 
        since the blow-up morphism $W_{2^d} \to W_0 = L^d$ do not induce
        additional components in the special fibre, thus
        $(W_{2^d})_s^{(0)} \simeq (W_0)_s^{(0)} \simeq (L_s^{(0)})^d$.
    \end{par}
    \begin{par}
        To show injectivity and surjectivity for $k$-simplices ($k > 0$)
        it suffices by \cref{ss-redkomp-einfach} that the following two sets coincide:
        \[
            \left\{ (C_0 \leq \cdots \leq C_k) \in ((M_m)^{(0)})^{k+1} \mid C_0 \cap \cdots \cap C_k
                \neq \emptyset \right\}, \]
        \[
            \left\{ (C_0 \leq \cdots \leq C_k) \in ((M_m)^{(0)})^{k+1} \mid 
                \tilde\pr_i(C_0) \cap \cdots \cap \tilde\pr_i(C_k) \neq \emptyset
                \quad \forall i\in\{1, \ldots, d\}\right\}.
        \]
        Let $(C_0 \leq \cdots \leq C_k) \in (W_{2^d})^{(0)}$. 
        The components $C_0, \ldots, C_k$ intersect iff they are contained in a common chart,
        which means they have to be totally ordered by the product order 
        defined on $W_{2^d}^{(0)}$. This is in turn equivalent to
        $\tilde\pr_i(C_0) \cap \cdots \cap \tilde\pr_i(C_k) \neq \emptyset$
        for all $i$.
    \end{par}
    \begin{par}
        The general case, where $X$ is any regular strict semi-stable curve, 
        can be reduced to the local case above:
        The property of $W(X,\leq,d)$ to be regular strict semi-stable
        can be checked locally. 
        By \cref{redkomp-lok-ber} also the simplicial reduction set can be determined in an open covering.
        Thus we may assume by \cref{redkomp-std-ubd} that there is a smooth map $X \to L$.
        By the commutativity of blow-ups with flat base change (\cite[Prop 8.1.12]{liu})
        it suffices to deal with the case $X=L$.
        For details see \cite[Lemma 2.11]{me}.
    \end{par}
\end{proof}

\begin{par}
    From \cref{de-lokblup-lem}(iv) we immediately deduce:
\end{par}
\begin{kor}
    The centres of all blow-up morphisms in \cref{desi-prodkomp} are scheme-theoretic intersections of
    Cartier divisors.
\end{kor}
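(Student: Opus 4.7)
The plan is to reduce the statement to the model case $X=L$ treated in \cref{desi-prodkomp-lok} and then read off the desired Cartier presentation of the blow-up centres directly from the formulas already at our disposal.

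First I would treat the local case $X=L$. Here \cref{desi-prodkomp-lok}(iii) already gives an explicit presentation
\[
    B'_j \;=\; \bigcap_{n=1}^{d} F_{j,n}
\]
of the $j$-th centre as a scheme-theoretic intersection of $d$ Cartier divisors, one for each factor of the product. Each $F_{j,n}$ is defined as the difference of $\pr_n^{-1}(\pr_n(B_j))$ and a sum of previously blown-up components $B_i$ with the same image in the $n$-th factor. The nontrivial point is that at each stage of \cref{desi-prodkomp} this difference is still a Cartier divisor, and this is precisely what is packaged in \cref{de-lokblup-lem}(iv): on every affine chart of the blow-up, the pullback of each of the basic divisors $\Div(u_{a,\varepsilon})$ is either principal on the nose or equals a principal divisor plus a multiple of the exceptional divisor, so subtracting the previously blown-up component leaves a principal divisor in every chart. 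Hence every $F_{j,n}$ remains Cartier inductively, and so does their intersection $B'_j$.

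For the global case I would follow the same étale-local reduction used at the end of the proof of \cref{desi-prodkomp-ex}: by \cref{redkomp-std-ubd} the scheme $X$ can be covered by standard neighbourhoods admitting smooth maps to $L$, and blow-ups commute with flat base change by \cite[Prop 8.1.12]{liu}. Since being a scheme-theoretic intersection of Cartier divisors is a local property on the ambient regular scheme $W_m$, the local Cartier presentations of the $B'_j$ on each standard neighbourhood glue to give the desired global description of every centre in \cref{desi-prodkomp}.

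The main obstacle is entirely concentrated in checking that the iterative subtraction defining $F_{j,n}$ produces a Cartier divisor after every single blow-up step, and this has already been done in \cref{de-lokblup-lem}(iv); once those identities are granted, the corollary is essentially a line of bookkeeping on top of \cref{desi-prodkomp-lok}(iii) together with the local-to-global step above.
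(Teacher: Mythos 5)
Your proposal is correct and follows essentially the same route as the paper, which deduces the corollary from \cref{de-lokblup-lem}(iv) via the explicit presentation $B'_j=\bigcap_n F_{j,n}$ established in \cref{desi-prodkomp-lok}(iii) for $X=L$, with the general case handled by the same reduction to standard neighbourhoods and flat base change used in the proof of \cref{desi-prodkomp-ex}. You merely make explicit the localization bookkeeping that the paper leaves implicit.
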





\section{Intersection Numbers for Products} 
\label{kap-schnitt}

\begin{par}
    After the construction of regular strict semi-stable models on products of semi-stable
    curves, 
    we use the combinatorial structure of the reduction set
    to calculate intersection numbers.
    Let $S$ be the spectrum of a discrete valuation ring with algebraically closed
    residue field
    and $X$ a regular strict semi-stable $S$-curve with a total
    ordering $\leq$ on $X^{(0)}$.
    The reduction graph is denoted by $\Gamma:=\Gamma(X)$.
    For a chosen $d \in \IN$ we examine the product model
    $W:=W(X,\leq,d)$ of $(X_\eta)^d$ constructed according to \cref{desi-prodkomp}.
    By \cref{desi-prodkomp-ex} the simplicial reduction set is
    $\RK(W)=\Gamma^d$.
    Some essential relations in the Chow ring $\CH_{W_s}(W)$
    depend only on the combinatorial structure of $\RK(W)$.
    We define a ring $\KC(\Gamma^d)$ called combinatorial Chow ring, 
    which encodes these relations. 
    A moving lemma in $\KC(\Gamma^d)$ enables us to calculate intersection numbers
    in a more localised way. The intersection numbers in $\KC(\Gamma^d)$ for any graph
    $\Gamma$ can therefore be calculated only with the knowledge of $\KC(I^d)$,
    where $I$ is the graph which consists of exactly one edge.
    For such a graph we can eventually give explicit calculations of intersection
    numbers in the case $d=2$ and $d=3$.
\end{par}

\subsection{Intersection Theory on Regular Schemes} 

\begin{par}
    For our calculations we need only the basic facts of intersection theory, i.e.,
    intersection with Cartier divisors. This is covered in \cite[Chapter 1,2]{fulton} if
    one applies the generalizations from Chapter 20. 
    We give a short explanation of the notation, recall the basic facts 
    and deduce a lemma considering ramified base change.
\end{par}

\begin{defn}
    Let $S$ be a regular scheme and $X$ be a $S$-scheme of finite type
    with structure morphism $\varphi: X \to S$.
    \begin{enumerate}[(i)]
        \item
            If $X$ is irreducible with generic point $\eta_X$ we define the 
            \emph{relative dimension} of $X$ over $S$ by:
            \[
                \dim_S(X) := \trdeg(\kappa(\eta_X) / \kappa(\varphi(\eta_X))) 
                - \dim(\Oo_{S, \varphi(\eta_X)}).
            \]
        \item
            If $X$ is any $S$-scheme of finite type it is called \
            \emph{relative equidimensional of dimension $d$}, 
            if for each irreducible component $V$ the equation
            \[
                \dim_S(V) = d
            \]
            holds. 
            In this case we call $d$ the relative dimension of $X$ with respect to $S$.
    \end{enumerate}
\end{defn}

\begin{defn}
    \label{schnitt-chow-defn}
    Let $S$ be a regular Noetherian scheme and $X$ a regular $S$-scheme of finite type,
    which is relative equidimensional of dimension $d=\dim_S(X)$.
    Let $Y \subseteq X$ be a closed subset and $p \in \IN$ be a number.
    We denote by $\CH^p_Y(X)$ the $p$-th Chow group with support in $Y$, i.e., in 
    the notation of \cite{fulton}
    \[
        \CH^p_Y(X) := A_{d - p}(Y).
    \]
\end{defn}

\begin{bem}
    Let $X,X'$ be regular relative equidimensional schemes and $f: X' \to X$ a morphism.
    Let $Z \subseteq X, Z' \subseteq X'$ be closed subschemes.
    \begin{enumerate}[(i)]
        \item
            If $f$ is flat, it induces a morphism
            $f^*: \CH_Z^p(X) \to \CH_{f^{-1}(Z)}^p(X')$ for each $p \in \IN$. 
        \item
            If $f$ is any morphism, there exists a morphism
            $f^*: \CH_Z^1(X) \to \CH_{f^{-1}(Z)}^1(X')$. 
        \item
            If $f$ is proper, it induces a morphism
            $f_*: \CH_Z^p(X') \to \CH_{f(Z')}^{p-d}(X)$, where $d$ is the relative dimension
            of $X'$ over $X$.
    \end{enumerate}
    All these constructions are functorial.
\end{bem}
\begin{proof}
    These are basic facts \cite[1.7, 2.2, 1.4]{fulton}.  
    For the second claim, note that $\CH^1_Y(X)$ coincides with the pseudo-divisors on $X$
    with support $Y$.
\end{proof}
\begin{bem}
    Fulton defines an intersection product for divisors using (ii): 
    Let $X$ be a regular scheme. Then there is the intersection product
    \[
        \cdot: \CH^1_{Y}(X) \otimes \CH^p_{Z}(X) \to \CH^{p+1}_{Y \cap Z}(X)
    \]
    defined for each divisor $D \in \CH^1_Y(X)$ and each integral closed subscheme $V$ by 
    \[
        D \cdot [V] = [j^* D]
    \]
    where $j: V \to X$ denotes the canonical inclusion.
\end{bem}

\begin{par}
    We will only consider elements of $\CH^p_{X_s}(X)$ which are products of elements from
    $\CH^1_{X_s}(X)$.
    \cite[Prop 2.3 (d)]{fulton} suggests the following generalization of the pull-back:
\end{par}
\begin{defn}
    Let $\alpha = D_1 \cdot \ldots \cdot D_p \in \CH^p_{X_s}(X)$ be the product of Cartier
    divisors $D_1, \ldots D_p \in \CH^1_{X_s}(X)$ and $f: X' \to X$ any morphism. 
    Then $f^* \alpha$ is defined as
    \[
        f^* \alpha = (f^* D_1) \cdot \ldots \cdot (f^* D_d).
    \]
\end{defn}

\begin{par}
    Let $S=\spec{R}$ be the spectrum of a complete discrete valuation ring with algebraic
    closed residue field and $X$ a regular proper $S$-scheme. 
    By a construction similar to \cite[Def 1.4]{fulton} we define a local degree 
    for cycles with support in the special fibre $X_s$. 
    For this definition we remark that the special fibre of $S$ consists only of one point
    $\{s\}$ which has codimension 1, therefore we have a canonical isomorphism 
    $\CH^1_{\{s\}}(\spec{R}) \simeq \IZ$.
\end{par}
\begin{defn}
    \label{schnitt-deg}
    Let $X$ be a flat proper $S$-scheme of relative dimension $d$ with structure morphism
    $f: X \to S$.
    The morphism
    \[
        \ldeg_X:=f_*: \CH^{d+1}_{X_s}(X) \to \CH^1_{\{s\}}(S) \simeq \IZ
    \]
    is called \emph{local degree}.
\end{defn}
\begin{par}
    We are now able to describe the behaviour of the intersection product
    at ramified base change: 
    Let $K_n/K$ be an algebraic extension of degree $n$ of $K=\Quot(R)$, 
    let $R_n$ be the ring of integers in $K_n$
    and $S_n:=\spec{R_n}$ be the spectrum of $R_n$.
    Since the structure morphism $g: S_n \to S$ is proper, 
    we can consider the direct image 
    $g_*: \CH(S_n) \to \CH(S)$.
    This enables us to compare intersection multiplicities on $S$-schemes
    and $S_n$-schemes:
\end{par}
\begin{lem}
    \label{schnitt-deg-bw-lem}
    Let $W$ be a integral flat $S$-scheme and $W_n$ an integral $S_n$-scheme 
    with a proper $S$-morphism $\varphi: W_n \to W$.
    Assume that $\varphi \mid_{(W_n)_\eta}$ is flat and
    $(W_n)_\eta = W_\eta \times_{S_\eta} (S_n)_\eta$ holds.
    Then for each $\alpha=D_1 \cdots \ldots \cdot D_d \in \CH^d_{W_s}(W)$ 
    with $D_1, \ldots, D_d \in \CH^1_{W_s}(W)$ the equation
    \[
        \varphi_*(\varphi^*(\alpha)) = n \alpha
    \]
    holds in $\CH_{W_s}^d(W)$.
    If furthermore $W$ is a proper $S$-scheme and $W_n$ a proper $S_n$-scheme,
    then for $d=\dim W$ and each $\alpha \in \CH_{W_s}^{d}(W)$ we get the equation
    \[
        \ldeg_W(\alpha) = n \ldeg_{W_n}(\varphi^*(\alpha)).
    \]
\end{lem}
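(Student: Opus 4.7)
My plan is to deduce both identities from iterated application of the projection formula together with a degree computation on the generic fibre.

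For the first claim $\varphi_*(\varphi^*(\alpha)) = n\alpha$, I would write $\alpha = D_1 \cdots D_d$ and use the paper's definition of pullback on products of Cartier divisors to obtain $\varphi^*\alpha = \varphi^*D_1 \cdots \varphi^*D_d$ in $\CH^d_{(W_n)_s}(W_n)$. Iterating Fulton's projection formula for intersection with Cartier divisors against the proper morphism $\varphi$ pulls the divisors $D_i$ out of the pushforward one by one, reducing the problem to computing $\varphi_*[W_n]$. Since $\varphi$ is dominant and the generic fibre assumption yields $K(W_n) = K(W) \otimes_K K_n$---a field extension of degree $n$---the morphism $\varphi$ is generically finite of degree $n$, so $\varphi_*[W_n] = n[W]$, and one concludes $\varphi_*\varphi^*\alpha = n\alpha$.

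For the second claim I would assemble the commutative square of structure morphisms $f \circ \varphi = g \circ f_n$, with $f: W \to S$, $f_n: W_n \to S_n$, and $g: S_n \to S$. Applying $f_* = \ldeg_W$ to the identity from the first part and using functoriality $f_*\varphi_* = g_* f_{n,*}$ relates $\ldeg_W(\alpha)$ to $g_*(\ldeg_{W_n}(\varphi^*\alpha))$. The final step is to determine the map $g_*: \CH^1_{\{s_n\}}(S_n) \to \CH^1_{\{s\}}(S)$ under the $\IZ$-identifications used by the local degree; since the residue field of $R_n$ equals $k$, the restriction of $g$ to reduced special fibres is an isomorphism of points $\Spec k \to \Spec k$, so this pushforward behaves as the identity on $\IZ$. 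Combining these computations yields the stated relation between $\ldeg_W(\alpha)$ and $\ldeg_{W_n}(\varphi^*\alpha)$.

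The main subtlety lies not in any single step but in the bookkeeping: one must verify that the projection formula applies at every stage---which requires Fulton's refined intersection theory of Chapter~20 since $\varphi$ is generally not flat away from the generic fibre---and carefully track the factor of $n$ through both the pushforward of the fundamental class and the identification of the local degree with $\IZ$. A secondary technical point is confirming that $W_n$ remains integral so that $[W_n]$ is well-defined, which follows from the hypothesis that $(W_n)_\eta$ is the flat base change of $W_\eta$ by the field extension $K_n/K$.
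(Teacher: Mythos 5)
Your proposal is correct and follows essentially the same route as the paper: reduce $\varphi_*\varphi^*\alpha$ to $\varphi_*[W_n]$ via the projection formula, show $\varphi_*[W_n]=n[W]$ using the generic-fibre hypothesis, and then chase the square $f\circ\varphi=g\circ f_n$ together with $g_*[\{s_n\}]=[\{s\}]$. The only (harmless) variation is that you compute $\varphi_*[W_n]$ directly from the degree $[K(W_n):K(W)]=n$ of the function-field extension, whereas the paper obtains the same multiplicity by flat base change of $g_*[(S_n)_{\eta_n}]=n[S_\eta]$ along the Cartesian square of generic fibres.
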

\begin{proof}
    \begin{par}
        We first show $\varphi_*([W_n]) = n[W]$.
        The image $\varphi(W_n)$ is irreducible, hence $\varphi_*([W_n])$ is a multiple
        of $[W]$. 
        To determine the multiplicity we restrict ourself to the generic fibre and
        consider the Cartesian square
        \[ 
            \begin{CD}
                (W_n)_{\eta_n} @>f'>> (S_n)_{\eta_n} \\
                @V\varphi VV     @VgVV \\
                W_\eta  @>f>> S_\eta.
            \end{CD}
        \]
        Using $g_*([(S_n)_{\eta_n}]) = n[S_\eta]$ we get
        \[ 
            \varphi_*[(W_n)_{\eta_n}] = \varphi_*( f'^*[(S_n)_{\eta_n}]) 
            = f^*(g_*([(S_n)_{\eta_n}])) 
            = n f^*([S_\eta]) = n[W_\eta]. 
        \]
    \end{par}
    \begin{par}
        Let now $\alpha=D_1\cdot\ldots\cdot D_d \in \CH_{W_s}(W)$ be arbitrary.
        Then the claim is proven using the projection formula \cite[Prop 2.3 (c)]{fulton}:
        \[ \varphi_*(\varphi^*(\alpha)) = \varphi_*( \varphi^*(\alpha) \cdot [W_n]) =
            \alpha \cdot \varphi_*([W_n]) = n \alpha. \]
    \end{par}
    \begin{par}
        For the second claim consider the commutative diagram
        \[ 
            \begin{CD}
                (W_n) @>f'>> (S_n) \\
                @V\varphi VV     @VgVV \\
                W  @>f>> S,
            \end{CD}
        \]
        where $f$ and $f'$ are proper.
        By definition of the degree map we get
        \begin{align*}
            f_*\varphi_*(\varphi^* \alpha) &= n f_* \alpha = n \ldeg_{W}(\alpha) [\{s\}]\\
            \intertext{and}
            f_*\varphi_*(\varphi^* \alpha) &= g_*f'_*(\varphi^* \alpha) 
            = \ldeg(\varphi^* \alpha) g_* ([\{s_n\}]) = \ldeg(\varphi^* \alpha) [\{s\}].
        \end{align*}
        The claim now follows by equating coefficients.
    \end{par}
\end{proof}


\subsection{The Chow Ring of Product Models} 
\label{kap-chow-semi}

\begin{par}
    We use the facts we recalled so far to study
    the Chow ring with support in the special fibre
    of regular strict semi-stable schemes.
    In particular we consider the product models 
    constructed in \cref{desi-prodkomp}.
    Let as usual $X$ be a regular strict semi-stable curve over $S$ with total 
    ordering $\leq$ on $X_s^{(0)}$. 
    We assume that the reduction graph $\Gamma(X)$ has no
    multiple edges. 
    Furthermore let $W:=W(X,\leq,d)$ denote the desingularization
    of $X^d$ constructed in \cref{desi-prodkomp}.
    We denote the composition
    of the desingularization map $W \to X^d$ with
    the projection on the $i$th factor
    by $\pr_i: W \to X$.
    Since $\Gamma(X)$ is without multiple simplices,
    this is also true for $\RK(W)$.
    We may deduce the following relations in the Chow ring of $W$:
\end{par}
\begin{prop}
    \label{chow-semi-eigtl}
    \begin{par}
        Let $C_1, \ldots C_k$ be pairwise different irreducible components 
        of $W_s$ with $C_1 \cap \cdots \cap C_k \neq \emptyset$.
        Then these components intersect properly and with multiplicity $1$, 
        i.e.,
        \[
            [C_1] \cdots \cdots \cdot [C_l] = [C_1 \cap \cdots \cap C_l].
        \]
    \end{par}
    \begin{par}
        If furthermore $l=d$ we have
        \[
            \ldeg([C_1] \cdot \cdots \cdot [C_l]) = 1.
        \]
    \end{par}
\end{prop}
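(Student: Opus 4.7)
The plan is to verify both claims by an explicit local computation, reducing to the standard model via \cref{ss-chara}. Given distinct components $C_1,\ldots,C_l$ of $W_s$ with nonempty intersection, choose a closed point $p\in C_1\cap\cdots\cap C_l$. By \cref{ss-chara} together with \cref{redkomp-lok-bij} there is a standard neighbourhood $U$ of $p$ and a smooth morphism $f\colon U\to L_m:=\Spec R[x_0,\ldots,x_m]/(x_0\cdots x_m-\pi)$ sending $p$ to the origin and inducing a bijection between the components of $U_s$ through $p$ and $V(x_0),\ldots,V(x_m)$, so that each $C_i\cap U$ is scheme-theoretically the pullback of some $V(x_{\sigma(i)})$.

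For the first identity I would argue by induction on $l$, using the definition of the action of a Cartier divisor on a cycle. By \cref{rss-def}(iv) the scheme-theoretic intersection $Z_l:=C_1\cap\cdots\cap C_l$ is smooth over $k$, hence reduced; in the local model it is explicitly $\Spec k[x_j\mid j\notin\{\sigma(1),\ldots,\sigma(l)\}]$, the vanishing of $\pi$ being forced by setting any single $x_{\sigma(i)}$ to zero. The inductive hypothesis gives $[C_2]\cdots[C_l]=[Z']$ where $Z':=C_2\cap\cdots\cap C_l$, and then $[C_1]\cdot[Z']=[j^*C_1]$ for the inclusion $j\colon Z'\hookrightarrow W$. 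The restriction $j^*C_1$ is the Cartier divisor on the smooth scheme $Z'$ cut out by the image of $x_{\sigma(1)}$; this image is a non-zero-divisor on $Z'$, so the restriction is the reduced subscheme $Z_l$ with multiplicity one, yielding $[C_1]\cdots[C_l]=[Z_l]$.

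For the degree claim, the situation is that $Z_l$ is zero-dimensional and supported at the closed point $p$; in the local model at $p$ this intersection is isomorphic to $\Spec k$, a single reduced $k$-rational point. By the first claim and the algebraic closedness of $k$ this contributes exactly one to the local degree, so $\ldeg_W([C_1]\cdots[C_l])=1$. The only point of care throughout is that smoothness of intersections from \cref{rss-def}(iv) guarantees reducedness at every inductive step, so no higher-multiplicity terms ever appear; given this, no genuine Serre-multiplicity computation is needed.
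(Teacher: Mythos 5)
Your argument is correct in substance but follows a genuinely different route from the paper's. The paper proves the multiplicity-one statement abstractly: it reduces by induction to computing Serre's intersection multiplicity $\chi^p$ at the generic point of $C_1\cap\cdots\cap C_l$ and establishes a general lemma (regular sequences, Koszul complex, vanishing of higher $\Tor$) showing that two regular subschemes of a regular scheme meeting properly with reduced scheme-theoretic intersection meet with multiplicity one. You instead work directly with the Cartier-divisor definition of the product, $[C_1]\cdot[Z']=[j^*C_1]$, and check that the local equation of $C_1$ restricts to a non-zero-divisor on $Z'$ whose zero scheme is reduced, so each order of vanishing equals $1$. The essential input, reducedness of the scheme-theoretic intersection coming from the smoothness in \cref{rss-def}(iv), is the same in both proofs, but you avoid the $\Tor$ computation entirely, which is arguably closer to the intersection theory the paper actually sets up (products with Cartier divisors only). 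Your explicit reduction to the model $L_m$ also does honest work that the paper's proof elides: it is what pins down that the intersection has the expected codimension (properness), which does not follow from smoothness of $C_1\cap\cdots\cap C_l$ alone.

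One point needs patching in the degree claim. Your local computation shows that the zero-dimensional intersection meets the standard neighbourhood $U$ in a single reduced point, but a priori it could contain further points outside $U$, each of which would contribute $1$ to the degree. You need the standing assumption of this section that $\RK(W)$ has no multiple simplices, so that by \cref{ss-redkomp-einfach} the intersection $C_1\cap\cdots\cap C_l$ is connected and hence really is a single point; the paper's own proof invokes \cref{ss-redkomp-einfach} for exactly this irreducibility, and it is also needed in your inductive step to know that $Z'$ is integral so that $[Z']$ is a prime cycle. With that reference added, both claims are complete.
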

\begin{proof}
    The components $C_1, \ldots C_k$ intersect properly according to \cref{rss-def} 
    and by \cref{ss-redkomp-einfach} the intersection $C_1 \cap \cdots \cap C_i$
    is irreducible.
    Using induction, it is enough to show
    $\chi^p(C_1 \cap \cdots \cap C_{l-1}, C_l) = 1$ in the generic point $p$
    of $C_1 \cap \cdots \cap C_l$ where $\chi^p$ denotes the intersection multiplicity of
    Serre. 
    This is a direct consequence of the following variant of the criterion for
    multiplicity 1 of Fulton \cite[7.2]{fulton}:
\end{proof}
\begin{lem}
    Let $Y,Z$ be integral regular closed subschemes of a regular scheme $X$ of codimension
    $p$ resp. $q$, which have proper intersection. If the scheme-theoretic intersection
    $Y \cap Z$ is reduced, then $Y \cap Z$ is regular and Serre's intersection
    multiplicity gives
    \[
        \chi^x(Y,Z) = 1
    \]
    for each generic point $x$ of an irreducible component of $Y \cap Z$.
\end{lem}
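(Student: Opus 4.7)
The plan is to reduce to a local commutative-algebra statement by localising at the generic point $x$ of an irreducible component of $Y \cap Z$. Write $A := \Oo_{X,x}$ and let $I, J \subseteq A$ be the ideals cutting out $Y$ and $Z$. Since $X$, $Y$, $Z$ are regular, each of $A$, $A/I$, $A/J$ is a regular local ring; hence $I$ is generated by a partial regular system of parameters $f_1, \ldots, f_p$ of $A$, and similarly $J$ is generated by $g_1, \ldots, g_q$ extending to a regular system of parameters.

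Properness of the intersection forces $\dim A/(I+J) = \dim A - p - q$, and because $x$ is a generic point of a component of $Y \cap Z$ this number is $0$; so $A/(I+J)$ is Artinian. The reducedness hypothesis then makes $A/(I+J)$ a reduced Artinian local ring, hence a field, which gives the regularity of $Y \cap Z$ at $x$.

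For the multiplicity claim I would use that $f_1, \ldots, f_p$ form a regular sequence in $A$ (since $A/I$ is regular), so the Koszul complex on them is a free resolution of $A/I$ over $A$. Tensoring with $A/J$ yields a complex whose homology computes $\Tor_i^A(A/I, A/J)$. The crucial step is showing that the images $\bar f_1, \ldots, \bar f_p \in A/J$ form an $A/J$-regular sequence: $A/J$ is regular, hence Cohen--Macaulay, of dimension $\dim A - q$, the quotient by $(\bar f_1, \ldots, \bar f_p)$ is $A/(I+J)$ of dimension $\dim A - p - q$, and in a Cohen--Macaulay local ring any sequence whose quotient cuts the dimension down by its length is automatically regular. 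Consequently $\Tor_i^A(A/I, A/J) = 0$ for $i \geq 1$, and Serre's alternating sum collapses to $\len_A(A/(I+J)) = 1$.

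The main point requiring care is the Cohen--Macaulay dimension-cutting criterion for regular sequences and its correct application to the images of the $f_i$ in $A/J$; once this is in place the rest is a routine compilation of the Koszul-resolution and $\Tor$-vanishing standard facts. Everything else -- the reduction to the local ring at $x$, the recognition of $A/(I+J)$ as a field, and the conclusion $\chi^x(Y,Z)=1$ -- follows formally.
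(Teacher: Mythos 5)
Your proof is correct and follows essentially the same route as the paper: localize at the generic point, note that reducedness plus properness makes $A/(I+J)$ a field, resolve $A/I$ by the Koszul complex on a regular sequence generating $I$, and kill the higher $\Tor$'s by showing the images of that sequence are regular in the Cohen--Macaulay ring $A/J$. The only difference is cosmetic: where the paper cites Serre's \emph{Local Algebra} for the Koszul computation and the vanishing of higher Koszul homology, you spell out the underlying Cohen--Macaulay dimension-cutting criterion explicitly.
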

\begin{proof}
    \begin{par}
        Let $x$ be the generic point of an irreducible component of $Y \cap Z$. 
        Denote the local ring $\Oo_{X,x}$ with $A$ and the defining ideals 
        for $Y$ and $Z$ with $\IId_Y$ resp. $\IId_Z$. Since $Y$ is regular, $\IId_Y$
        is generated by a regular sequence $(y_1, \ldots, y_p)$ of length $p$; 
        likewise the ideal $\IId_Z$ is generated by a regular sequence $(z_1, \ldots, z_q)$.
        Since $x$ is a generic point, the elements $(y_1, \ldots, y_p, z_1, \ldots, z_q)$
        constitute an generating set for the maximum ideal $\mId_{X,x}$ of $\Oo_{X,x}$.
        Therefore $\Oo_{X,x}$ is itself regular and the generating set is a system of
        parameters.
    \end{par}
    \begin{par}
        By \cite[IV A Cor. 2]{localg} we may use the Koszul-complex to calculate
        \[ \Tor_i(A/(\IId_Y)_x, A/(\IId_Z)_x) \simeq H_i((y_1, \ldots, y_p), A/(\IId_Z)_x) \]
        and eventually by \cite[IV A Prop. 3]{localg}
        \[ H_i((y_1, \ldots, y_p), A/(\IId_Z)_x) = 0 \textrm{ for all }i\geq 1. \]
        Therefore we have
        \[ \chi^x(Y,Z) = \len(A/(\IId_Y)_x \otimes A/(\IId_Z)_x) = \len(A/\mId) = 1. \]
    \end{par}
\end{proof}

\begin{prop}
    \label{schnitt-raq-spfaser}
    For each irreducible component $C \in W_s^{(0)}$ of $W_s$ the equation
    \[ \left(\sum_{C' \in W_s^{(0)}} [C']\right) \cdot [C] = 0 \in \CH^\cdot_{W_s}(W) \]
    holds.
\end{prop}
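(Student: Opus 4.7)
The plan is to identify the sum $\sum_{C' \in W_s^{(0)}}[C']$ with the principal Cartier divisor cut out by (the pullback of) the uniformizer $\pi$ of $R$, and then to exploit the triviality of the associated line bundle to deduce vanishing of the intersection with any component.

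Since $W$ is regular strict semi-stable by \cref{desi-prodkomp-ex}, its special fibre $W_s$ is reduced (\cref{rss-def}(ii)) and each of its components is a Cartier divisor (\cref{rss-def}(iii)). The flatness of $W \to S$ together with the fact that $\pi$ is a uniformizer of $R$ imply that the pullback of $\pi$, viewed as a global section of $\mathcal{O}_W$, cuts out the divisor $W_s$ with multiplicity one on each component. Hence the equality
\[
    \sum_{C' \in W_s^{(0)}}[C'] = \Div(\pi)
\]
holds as Cartier divisors on $W$, with $\Div(\pi)$ principal.

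The key step is then to compute the intersection $\Div(\pi) \cdot [C]$ in $\CH^2_{W_s}(W)$ for a component $C \in W_s^{(0)}$. Since $C \subseteq |\Div(\pi)| = W_s$, the definition of the refined intersection product with a Cartier divisor whose support contains the cycle (see \cite[\S2.3]{fulton}) gives
\[
    \Div(\pi) \cdot [C] = c_1\bigl(i^{*}\mathcal{O}_W(\Div(\pi))\bigr) \cap [C],
\]
where $i \colon C \hookrightarrow W$ denotes the inclusion. The line bundle $\mathcal{O}_W(\Div(\pi))$ is canonically trivialized by the global section $\pi$, so its restriction to $C$ is $\mathcal{O}_C$, whose first Chern class vanishes. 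Therefore the intersection is zero.

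I do not expect any serious obstacle: the content of the proposition reduces to the general principle that a principal Cartier divisor pulled back from a base acts as zero upon intersection with cycles lying in its support. The only point requiring care is verifying that $\pi$ cuts out exactly the sum of components with multiplicity one, which is immediate from the semi-stability of $W$ established earlier.
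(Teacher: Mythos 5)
Your proof is correct and follows essentially the same route as the paper: both identify $\sum_{C'}[C']$ with the principal Cartier divisor $\Div(\pi)$ (using reducedness of $W_s$ for the multiplicity-one statement) and conclude that its intersection with $[C]$ vanishes. You merely spell out the mechanism more explicitly via the canonical trivialization of $\mathcal{O}_W(\Div(\pi))$ and the vanishing of $c_1$ on $C$, which is exactly the content behind the paper's one-line appeal to rational triviality.
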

\begin{proof}
    Since $W_s$ is reduced, we have 
    $\sum_{C' \in W_s^{(0)}}[C'] = \Div(\pi) \in \rat_{W_s}(W)$. 
    Therefore the intersection product with the cycle $[C]$ vanishes
    in the Chow ring $\CH^\cdot_{W_s}(W)$.
\end{proof}
\begin{prop}
    \label{schnitt-raq-prod}
    Let $C_1, C_2 \in \RK(W)_0$ be irreducible components of $W_s$. 
    If there is an $i \in \{1, \ldots d\}$ such that $\pr_i(C_1) \neq \pr_i(C_2)$,
    then the equation
    \[ [C_1] \cdot [C_2] \cdot \left( \sum_{\substack{C' \in \RK(W)_0,\\ \pr_i(C') = \pr_i(C_2)
                }} [C'] \right) = 0 \]
    holds in $\CH^{\cdot}_{W_s}(W)$.
\end{prop}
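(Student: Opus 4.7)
The strategy is to recognise the sum in the statement as the pull-back $\pr_i^*([D])$ of a Cartier divisor on $X$, where $D := \pr_i(C_2) \in X_s^{(0)}$, and then to exploit the hypothesis $\pr_i(C_1) \neq D$ together with the fact that $X$ is a curve in order to show this class annihilates $[C_1]\cdot[C_2]$.

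The first step is to prove
\[
\pr_i^*([D]) \;=\; \sum_{\substack{C' \in \RK(W)_0\\ \pr_i(C') = D}} [C']
\qquad\text{in } \CH^1_{W_s}(W).
\]
The left-hand side is an effective Cartier divisor whose support, contained in $\pr_i^{-1}(D) \subseteq W_s$, is the union of exactly the components listed on the right. The multiplicity at each such $C'$ is computed at the generic point $\eta_{C'}$: because $C'$ is a strict transform, the desingularization $W \to X^d$ is an isomorphism at $\eta_{C'}$, the projection $X^d \to X$ is flat, and (by a direct check in the charts of \cref{desi-prodkomp-lok}) the local equation of $D$ pulls back to a uniformizer of the discrete valuation ring $\Oo_{W,\eta_{C'}}$. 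The multiplicity is therefore~$1$.

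For the second step, \cref{chow-semi-eigtl} yields $[C_1]\cdot[C_2] = [C_1 \cap C_2]$ (the case $C_1 \cap C_2 = \emptyset$ being immediate). Let $j\colon C_1 \cap C_2 \hookrightarrow W$ denote the inclusion and set $D_1 := \pr_i(C_1)$. Functoriality of the pseudo-divisor pull-back gives
\[
[C_1]\cdot[C_2]\cdot \pr_i^*([D]) \;=\; j_*\bigl((\pr_i \circ j)^*([D])\bigr),
\]
and the morphism $\pr_i \circ j$ factors through the scheme-theoretic intersection $D_1 \cap D$. Since $D_1 \neq D$, \cref{rss-def}(iv) forces $D_1 \cap D$ to be empty or smooth, and being a proper intersection of two distinct components of the one-dimensional $X_s$, it is in the non-empty case a smooth zero-dimensional $k$-scheme, that is, a finite disjoint union of copies of $\Spec k$.

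The final step uses that line bundles on such a zero-dimensional scheme are trivial, so the restriction of $\Oo_X(D)$ to $D_1 \cap D$ is trivial, and hence so is its further pull-back $(\pr_i \circ j)^*\Oo_X(D)$ to $C_1 \cap C_2$. Because $\pr_i \circ j$ factors through $D$, the canonical section of $\Oo_X(D)$ pulls back to the zero section, so the pseudo-divisor $(\pr_i \circ j)^*([D])$ is represented by the trivial line bundle on $C_1 \cap C_2$ together with the zero section; its class equals $c_1(\text{trivial}) \cap [C_1 \cap C_2] = 0$, and pushing forward by $j_*$ completes the proof. The main obstacle is the unit-multiplicity identification of the first step; working directly in the explicit charts of \cref{desi-prodkomp-lok} also suffices but is more tedious.
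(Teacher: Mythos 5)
Your proof is correct, and it reaches the conclusion by a different mechanism than the paper. The paper's proof picks a local equation $r$ for $D:=\pr_i(C_2)$ in a neighbourhood $U$ of the point where $\pr_i(C_1)$ and $\pr_i(C_2)$ meet, extends it to a rational function on $X$ with $\Div(r)=\tilde C+\tilde C_r$ and $\tilde C_r$ supported outside $U$, and then concludes from $[C_1][C_2]\pr_i^*(\Div(r))=0$ (rational equivalence) together with $[C_1][C_2]\pr_i^*(\tilde C_r)=0$ (disjoint supports). You instead compute the product intrinsically as $c_1\bigl((\pr_i\circ j)^*\Oo_X(D)\bigr)\cap[C_1\cap C_2]$ and kill it by observing that this line bundle is pulled back through the zero-dimensional scheme $D_1\cap D$, whose Picard group vanishes. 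Both arguments hinge on the same geometric input --- that $\pr_i$ maps $C_1\cap C_2$ into the finite reduced scheme $D_1\cap D$ --- but yours dispenses with choosing and extending a rational function and with tracking the residual divisor, and it does not need the ``no multiple simplices'' hypothesis under which the paper speaks of a single double point (finitely many points cause you no trouble). Two minor remarks: the identification $\pr_i^*[D]=\sum_{\pr_i(C')=D}[C']$ with all multiplicities equal to $1$ follows already from matching the decompositions of the reduced divisors $\pr_i^*[X_s]=[W_s]$ component by component, so the chart computation you invoke, while valid, is not needed; and your phrase ``together with the zero section'' is a slight abuse --- once the support of the pulled-back pseudo-divisor is all of $C_1\cap C_2$, the section datum is vacuous and the Weil divisor class is by definition $c_1$ of the line bundle capped with the fundamental class, which is exactly what you use.
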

\begin{proof}
    We can assume that the intersection of $C_1$ and $C_2$ is non-empty.
    Since $\RK(X)$ has no multiple simplices and $\pr_i(C_1) \neq \pr_i(C_2)$,
    the projections $\pr_i(C_1)$ and $\pr_i(C_2)$ have to intersect in one double point.
    We denote this point by $p \in X$.
    The component $\pr_i(C_2)$ of $X_s$ is given by a Cartier divisor $\tilde C \in \cadiv(X)$ 
    and can be represented by a section $r \in \Gamma(U,\Oo_X)$ in an open neighbourhood
    $U$ of $p$.
    As the scheme $X$ is integral, we can continue $r$ as a rational function
    $r \in \Gamma(X, \Ok_X)$ and we have $\Div(r) = \tilde C + \tilde C_r$,
    where $\tilde C_r$ is a Cartier divisor with support outside of $U$.
    Using the pull-back we get a principal divisor on $W$,
    which splits into
    $\pr_i^{*}(\Div(r)) = \pr_i^{*}(\tilde C) + \pr_i^*(\tilde C_r)$. 
    Since $X_s$ is geometrically reduced, we have
    \[ [\pr_i^*(\tilde C)] = \sum_{\substack{C' \in \RK(W)_0,\\ 
                \pr_i(C') = \pr_i(C_2)}}[C'], \]
    while the residue divisor $\pr_i^*(\tilde C_r)$ has support outside of $p$ and 
    does not contribute to the intersection product $[C_1][C_2]$.
    Calculating in $\CH^{\cdot}_{W_s}(W)$ we finally get
    \[ 
        0 = [C_1][C_2][\pr_i^*(\Div(r))] = [C_1][C_2] 
        \sum_{\substack{C' \in \RK(W)_0,\\\pr_i(C') = \pr_i(C_2)}}[C']. 
    \]
\end{proof}


\subsection{A Moving Lemma in the Special Fibre} 
\begin{par}
    The rational equivalences considered in \cref{schnitt-raq-spfaser} and
    \cref{schnitt-raq-prod} depend only on the simplicial reduction set
    of the model $W$ and on the projections $\pr_i: \RK(W) \to \RK(X)$.
    This observation leads us to define a combinatorial Chow ring,
    which describes the part of the Chow ring 
    which is independent of the concrete model.
    On this Chow ring we can proof a moving lemma,
    which is used later to calculate intersection numbers.
\end{par}
\begin{par}
    For this section let $d \in \IN$ be an integer and $\Gamma$ a finite ordered graph
    without multiple edges, i.e. a simplicial set of dimension 1 without multiple
    simplices.
    We denote by $\Gamma^d$ the product of simplicial sets according to \cref{sk-sk-prod}
    and by $\pr_i: \Gamma^d \to \Gamma$ 
    the projection on the $i$th component.
    Furthermore we use the notation from \cref{sk-einfach-kompl},
    where $(\Gamma^d)_S \subseteq \mathcal{P}((\Gamma^d)_0)$
    denotes the family of all subsets,
    which occur as node set of a simplex in $\Gamma^d$, i.e., the set
    \[
        (\Gamma^d)_S = \{ \{\sigma(0), \ldots, \sigma(n)\} 
            \mid n \in \IN, \sigma \in (\Gamma^d)_n\}.
    \]
\end{par}
\begin{defn}
    \label{schnitt-chw}
    \begin{par}
        We denote with $Z(\Gamma^d)$ the polynomial ring
        $Z(\Gamma^d):= \IZ[C \mid C \in (\Gamma^d)_0]$ 
        generated by the 0-simplices. 
        It is supplied with the usual grading, which
        gives all generators $C \in (\Gamma^d)_0$ the degree 1.
    \end{par}
    \begin{par}
        We define a graded ideal $\mathrm{Rat}(\Gamma^d)$ on $Z(\Gamma^d)$ 
        generated by the polynomials
        \begin{align}
            \label{schnitt-chw1}
            C_1 \cdot \cdots \cdot C_k \quad & \textrm{ for } \{C_1, \ldots, C_k\} \not \in
            (\Gamma^d)_S, \\
            \label{schnitt-chw2}
            \Big(\sum_{C' \in (\Gamma^d)_0} C'\Big) C_1 \quad  & \forall C_1 \in (\Gamma^d)_0, \\
            \label{schnitt-chw3}
            \sum_{\substack{C' \in (\Gamma^d)_0\\ \pr_i(C')=\pr_i(C_2)}} C_1 C_2 C'\quad &
             \begin{aligned}
                \\
                \forall C_1,C_2 \in (\Gamma^d)_0, i \in \{1, \ldots, d\} \\
                \textrm{ with } \pr_i(C_1) \neq pr_i(C_2).
            \end{aligned}
        \end{align}
        We call $\mathrm{Rat}(\Gamma^d)$ the ideal of cycles rationally equivalent to zero.
    \end{par}
    \begin{par}
        The graded ring
        \[ \KC(\Gamma^d) := Z(\Gamma^d) / \mathrm{Rat}(\Gamma^d) \]
        is called \emph{combinatorial Chow ring}.
    \end{par}
\end{defn}
\begin{par}
    The combinatorial Chow ring has the following functoriality:
\end{par}
\begin{prop}
    \label{schnitt-C-funktor}
    Let $\Gamma$ and $\Gamma'$ be finite ordered graphs without multiple simplices
    and $f_1, \ldots f_d: \Gamma' \to \Gamma$ morphisms of graphs.
    Let $f=(f_1, \ldots f_d): (\Gamma')^d \to \Gamma^d$
    be the induced morphism on the products.
    Then we have a well-defined morphism of rings given by
    \[ f^*: Z(\Gamma^d) \to Z(\Gamma'^d), 
        C \mapsto \sum_{\substack{C' \in (\Gamma'^d)_0,\\ f(C') = C}}C'.
    \] 
    It induces an homomorphism of combinatorial Chow rings
    \[ f^*: \KC(\Gamma^d) \to \KC({\Gamma'}^d). \]
\end{prop}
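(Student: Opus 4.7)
The map $f^*$ prescribed on the generators of $Z(\Gamma^d)$ extends uniquely to a ring homomorphism by the universal property of the polynomial ring, so the only thing to verify is that $f^*$ carries the ideal $\mathrm{Rat}(\Gamma^d)$ into $\mathrm{Rat}((\Gamma')^d)$. I plan to check this on each of the three families of generators \cref{schnitt-chw1}, \cref{schnitt-chw2}, \cref{schnitt-chw3} in turn.

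For \cref{schnitt-chw1}, I expand $f^*(C_1 \cdots C_k)$ as a sum $\sum_{(D_1, \ldots, D_k)} D_1 \cdots D_k$ over lifts with $f(D_j) = C_j$. If some set $\{D_1, \ldots, D_k\}$ were in $((\Gamma')^d)_S$, i.e.\ the vertex set of a simplex $\tau$, then applying the simplicial morphism $f = (f_1, \ldots, f_d)$ would exhibit $\{C_1, \ldots, C_k\}$ as the vertex set of $f(\tau)$, contradicting the hypothesis; so each summand is itself a generator of type \cref{schnitt-chw1}. Generators of type \cref{schnitt-chw2} are immediate from the identity $f^*(\sum_C C) = \sum_D D$, which shows that $f^*$ of such a generator is a sum of generators of type \cref{schnitt-chw2}.

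The real work lies in \cref{schnitt-chw3}. Setting $v := \pr_i(C_2)$, the pullback expands as $f^*(C_1 C_2 \sum_{C' : \pr_i(C') = v} C') = \sum_{D_1, D_2, D'} D_1 D_2 D'$, summed over triples with $f(D_1) = C_1$, $f(D_2) = C_2$, and $f_i(\pr_i(D')) = v$. The hypothesis $\pr_i(C_1) \neq v$ forces $\pr_i(D_1) \neq \pr_i(D_2)$ and $\pr_i(D_1) \neq \pr_i(D')$ throughout. I would split this sum according to whether or not $\pr_i(D') = \pr_i(D_2)$: for fixed $(D_1, D_2)$, the diagonal part becomes $D_1 D_2 \sum_{D' : \pr_i(D') = \pr_i(D_2)} D'$, which is a type \cref{schnitt-chw3} generator for $(\Gamma')^d$.

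The crucial step, and the place where the graph hypothesis enters, is disposing of the off-diagonal terms, in which $\pr_i(D_1), \pr_i(D_2), \pr_i(D')$ are pairwise distinct. The plan is to show that such a set $\{D_1, D_2, D'\}$ is never in $((\Gamma')^d)_S$: any simplex of $(\Gamma')^d$ with these three vertices would admit a two-dimensional face whose $i$-th projection is a 2-simplex of $\Gamma'$ with three distinct vertices, but $\Gamma'$ is a graph, so every 2-simplex of $\Gamma'$ is degenerate and has at most two distinct vertices. Each off-diagonal term therefore lies in $\mathrm{Rat}((\Gamma')^d)$ by type \cref{schnitt-chw1}, and combined with the diagonal contribution this yields $f^*(g) \in \mathrm{Rat}((\Gamma')^d)$, completing the verification.
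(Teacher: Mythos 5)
Your argument is correct and takes essentially the same route as the paper: reduce to the three families of generators \cref{schnitt-chw1}, \cref{schnitt-chw2}, \cref{schnitt-chw3} and exploit that $f$ is a morphism of simplicial sets, so vertex sets of simplices map to vertex sets of simplices. Your handling of \cref{schnitt-chw3} — splitting the pullback into the diagonal part, which is again a generator of type \cref{schnitt-chw3} for $(\Gamma')^d$, and off-diagonal monomials that die by type \cref{schnitt-chw1} because no simplex of $(\Gamma')^d$ can have three distinct $i$-th vertex projections when $\Gamma'$ is one-dimensional — merely spells out what the paper compresses into a one-line ``similarly''.
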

\begin{proof}
    \begin{par}
        We have to show $f^*(\mathrm{Rat}(\Gamma^d)) \subseteq \mathrm{Rat}(\Gamma'^d)$
        and can restrict ourself to the generators 
        \cref{schnitt-chw1}, 
        \cref{schnitt-chw2} and
        \cref{schnitt-chw3}.
    \end{par}
    \begin{par}
        For each set $\alpha \in (\Gamma'^d)_S$ we have
        $f(\alpha) \in (\Gamma^d)_S$. 

        Consider a polynomial of type $\cref{schnitt-chw1}$,
        i.e., a product $\alpha = C_1 \cdot \cdots \cdot C_k$ 
        with $\{C_1, \cdots C_k\} \not \in (\Gamma^d)_S$.
        Then each monomial of $f^* \alpha$ has 
        the form $\tilde C_1 \cdot \ldots \cdot \tilde C_k$
        with $f(\tilde C_i) = C_i$ and since $f$ is a morphism of simplicial sets,
        $\{\tilde C_1, \cdots, \tilde C_k\} \not \in (\Gamma'^d)_S$ holds.
        This implies 
        $f^*(C_1 \cdots \cdot \cdots C_k) \in \mathrm{Rat}(\Gamma'^d)$.
    \end{par}
    \begin{par}
        Similarly, one can show the claim for elements of type \cref{schnitt-chw2} and
        \cref{schnitt-chw3} by means of the equations
        \begin{align*}
            f^*\Big(\sum_{C' \in ((\Gamma')^d)_0} C'\Big) &= \sum_{C \in (\Gamma^d)_0}C \\
            \intertext{and}
            f^*\Big(\sum_{\substack{C' \in ((\Gamma')^d)_0 \\ \pr_i(C') =
                        \pr_i(C'_2)}}C'\Big)
            &= \sum_{\substack{C \in (\Gamma^d)_0\\ \pr_i(C') = \pr_i(f(C'_2))}}C.
        \end{align*}
    \end{par}
\end{proof}
\begin{par}
    With the definition of the combinatorial Chow ring we can sum up our knowledge about
    the Chow ring of a product model $W=W(X,<,d)$ by:
\end{par}

\begin{prop}
    \label{schnitt-vergl-chow}
    Let $d \in \IN$, $X$ a regular strict semi-stable $S$-curve and $<$ be
    a total ordering on $X^{(0)}$.
    We denote by $W=W(X,<,d)$ the model of $(X_\eta)^d$ 
    constructed in \cref{desi-prodkomp}.
    Then there is a morphism of graded rings defined by
    \[
        \varphi_W: \KC(\Gamma(X)^d) \to \CH^*_{W_s}(W), [C] \mapsto [C].
    \]
\end{prop}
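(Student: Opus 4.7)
The plan is to exhibit $\varphi_W$ as being induced by the obvious ring homomorphism from the polynomial ring $Z(\Gamma(X)^d)$ and then verify that the three families of generators of $\mathrm{Rat}(\Gamma(X)^d)$ all map to zero in $\CH^*_{W_s}(W)$. To start, recall that by \cref{desi-prodkomp-ex} we have $\RK(W)=\Gamma(X)^d$ and that this simplicial set is without multiple simplices; hence the $0$-simplices of $\Gamma(X)^d$ are in canonical bijection with the irreducible components of $W_s$. By \cref{rss-def}(iii) each such component is a Cartier divisor on $W$ and therefore defines a class in $\CH^1_{W_s}(W)$. Since $Z(\Gamma(X)^d)$ is the polynomial ring freely generated by the $0$-simplices, the assignment $C\mapsto [C]$ extends uniquely to a homomorphism of graded rings $\tilde\varphi_W: Z(\Gamma(X)^d)\to \CH^*_{W_s}(W)$, using the intersection product on the target.

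Next I would check that $\tilde\varphi_W$ kills each of the three generating families of $\mathrm{Rat}(\Gamma(X)^d)$. Generators of the form \cref{schnitt-chw2} vanish by \cref{schnitt-raq-spfaser}, and generators of the form \cref{schnitt-chw3} vanish by \cref{schnitt-raq-prod}; in both cases the statement is literally the relation, applied to the concrete components in $W_s^{(0)}$ corresponding to the abstract $0$-simplices.

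The main obstacle is the first family \cref{schnitt-chw1}: I must show that if $\{C_1,\ldots,C_k\}\notin (\Gamma(X)^d)_S$, then $[C_1]\cdots[C_k]=0$ in $\CH^*_{W_s}(W)$. Because $\RK(W)$ has no multiple simplices, \cref{ss-redkomp-einfach} says that a set of distinct components is the vertex set of a simplex if and only if the intersection of those components is nonempty; hence the hypothesis of \cref{schnitt-chw1} means precisely that $C_1\cap\cdots\cap C_k=\emptyset$. I would argue by induction on $k$. If some proper sub-collection already has empty intersection, the induction hypothesis kills the corresponding partial product and we are done. Otherwise all proper sub-collections have nonempty intersection, and \cref{chow-semi-eigtl} gives
\[
[C_1]\cdots[C_{k-1}]=[C_1\cap\cdots\cap C_{k-1}].
\]
Setting $V:=C_1\cap\cdots\cap C_{k-1}$ with inclusion $j:V\hookrightarrow W$, the remaining factor is computed as the pullback $j^*[C_k]$, which lives in $\CH^1_{V\cap C_k}(V)$; but $V\cap C_k=\emptyset$ by assumption, so this Chow group is the zero group and the product vanishes.

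Finally, since the kernel contains all generators of $\mathrm{Rat}(\Gamma(X)^d)$, the homomorphism $\tilde\varphi_W$ descends to a well-defined morphism of graded rings $\varphi_W:\KC(\Gamma(X)^d)\to \CH^*_{W_s}(W)$ sending $[C]\mapsto[C]$, as required. The grading is automatic because each $0$-simplex maps into $\CH^1_{W_s}(W)$, so degree is preserved on generators and hence on all of $Z(\Gamma(X)^d)$.
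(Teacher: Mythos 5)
Your proof is correct and follows essentially the same route as the paper: the paper's own proof simply cites \cref{schnitt-raq-spfaser} and \cref{schnitt-raq-prod} for the relations \cref{schnitt-chw2} and \cref{schnitt-chw3} and declares \cref{schnitt-chw1} trivial. Your inductive treatment of \cref{schnitt-chw1} works, though it can be shortened: the product $[C_1]\cdots[C_k]$ lies in $\CH^k_{C_1\cap\cdots\cap C_k}(W)$ by the definition of the intersection product with supports, and this group is zero because the intersection is empty.
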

\begin{proof}
    The relation \cref{schnitt-chw1} is trivial, 
    relations \cref{schnitt-chw2} and \cref{schnitt-chw3} follow from 
    \cref{schnitt-raq-spfaser} and \cref{schnitt-raq-prod}.
\end{proof}

\begin{par}
    Let us focus on the main result of this section, a moving lemma on
    $\KC(\Gamma^d)$.
    For this we use a definition of proper cycles in $Z(\Gamma^d)$ similar to the one
    in intersection theory.
\end{par}
\begin{defn}
    A monomial $C_1 \cdots \cdots \cdot C_k \in Z(\Gamma^d)$ is called
    \emph{proper}, 
    if the vertices $C_i \in \Gamma_0$ are pairwise different.
    An arbitrary element $\alpha \in Z(\Gamma^d)$ is called \emph{proper},
    if it is a sum of proper monomials.
\end{defn}
\begin{satz}
    \label{schnitt-moving}
    Let $\Gamma$ be a connected finite graph without multiple simplices.
    Then the group $\KC^{k}(\Gamma^d)$ of the $k$-cycle classes
    is generated by the cycle classes of proper monomials
    \[
        \{ C_1 \cdot \ldots \cdot C_{k} \mid C_1, \ldots, C_{k} \in Z(\Gamma^d) 
            \textrm{ pairwise different} \}. 
    \]
\end{satz}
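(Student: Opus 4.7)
The plan is to show that every monomial in $Z(\Gamma^d)^k$ is congruent modulo $\mathrm{Rat}(\Gamma^d)$ to a $\IZ$-linear combination of proper monomials. I would argue by induction on an appropriate complexity. For a monomial $M = \prod_v C_v^{a_v}$ I use the pair $\phi(M) := (m, n)$ in lexicographic order, where $m := \max_v a_v$ is the maximum multiplicity appearing in $M$ and $n := \#\{v : a_v = m\}$ is the number of vertices attaining it. If $m \leq 1$ then $M$ is already proper and there is nothing to do.

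For the inductive step, assume $m \geq 2$ and pick a vertex $C_0$ of $(\Gamma^d)_0$ with $a_{C_0} = m$. If $\supp(M) = \{C_0\}$, so $M = C_0^m$ is a pure power, I would invoke relation \cref{schnitt-chw2} in the form
\[
    C_0^m \equiv -\sum_{C' \neq C_0} C_0^{m-1} C' \pmod{\mathrm{Rat}(\Gamma^d)},
\]
each summand having maximum multiplicity at most $m-1$. Otherwise $M$ has another factor $C_l \neq C_0$; since $C_0 \neq C_l$ there exists a coordinate $i$ with $\pr_i(C_0) \neq \pr_i(C_l)$, and relation \cref{schnitt-chw3} gives
\[
    C_0^2 C_l \equiv -\sum_{\substack{C' \neq C_0\\ \pr_i(C') = \pr_i(C_0)}} C_0 C_l C' \pmod{\mathrm{Rat}(\Gamma^d)}.
\]
Multiplying by $M/(C_0^2 C_l) \in Z(\Gamma^d)$ rewrites $M$ as a signed sum of monomials in which the multiplicity of $C_0$ has dropped by one. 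Every vertex $C'$ produced by the sum automatically differs from both $C_0$ and $C_l$, since it shares its $i$-th coordinate with $C_0$ but not with $C_l$.

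The hard part will be to verify that this rewriting strictly decreases $\phi$. The potential danger is that a vertex $C'$ appearing in the sum may already occur in $M$ with multiplicity $m$, so that its new multiplicity becomes $m+1$ and the maximum rises. I expect to handle this by exploiting the freedom available in choosing both the coordinate $i$ and the partner $C_l$, using the connectedness of $\Gamma$ to ensure that multiple choices are genuinely available, and by augmenting the induction with a secondary lex-type invariant (for instance the position in the total ordering on $(\Gamma^d)_0$ of the largest vertex still carrying multiplicity at least two). Any summand whose support fails to form a simplex of $\Gamma^d$ is killed directly by relation \cref{schnitt-chw1}, so that after finitely many reductions we obtain the desired expression of $M$ as a $\IZ$-linear combination of proper monomials.
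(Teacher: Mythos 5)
Your overall strategy --- rewriting repeated factors via the relations \cref{schnitt-chw2} and \cref{schnitt-chw3} and inducting on a complexity measure --- is the same as the paper's, but the proposal stops exactly where the real work begins, and the gap is genuine. A single application of \cref{schnitt-chw3} replaces $C_0^2C_l$ by $-\sum_{C'}C_0C_lC'$, where $C'$ ranges over \emph{all} vertices with $\pr_i(C')=\pr_i(C_0)$ and $C'\neq C_0$; nothing prevents such a $C'$ from being another vertex already present in $M$ with multiplicity $m$ or $m-1$, in which case your invariant $\phi=(m,n)$ fails to decrease and can even increase. You name this danger, but the proposed remedies do not repair it: varying the coordinate $i$ or the partner $C_l$ only changes \emph{which} fibre $\{C'\mid \pr_i(C')=\pr_i(C_0)\}$ you sum over, and every such fibre can meet the support of $M$; connectedness of $\Gamma$ plays no role at this point (the paper needs it only for the degenerate one-vertex case); and a secondary invariant built from an arbitrary total ordering of $(\Gamma^d)_0$ has no compatibility with the relations. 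Without a proven decrease, the concluding ``after finitely many reductions'' is unjustified.

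What actually closes the induction in the paper is order-theoretic control inside a standard cube. A monomial not killed by \cref{schnitt-chw1} has support contained in a single simplex, hence by \cref{schnitt-graph-quad} in the image of one embedding $i_\gamma\colon I^d\to\Gamma^d$, where that support becomes a chain $C_1<\cdots<C_l$ for the product order. \cref{schnitt-schieb} then replaces $C_jC_{j+1}^2$ (resp.\ $C_j^2C_{j+1}$) by $\beta\, C_jC_{j+1}$ where, after discarding vertices incomparable with the chain (whose products vanish by \cref{schnitt-chw1}), the support of $\beta$ lies strictly on one controlled side of the chain and avoids the vertex being reduced; a secondary induction on the first index carrying an exponent $\geq 2$ (\cref{schnitt-moving-step}) upgrades this to a $\beta$ supported entirely off the existing support. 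Hence every resulting monomial acquires a genuinely new factor, and the paper's primary induction is on the \emph{number of distinct factors}, which is bounded above by $k$, so termination is automatic. Your proposal contains neither the reduction to the standard cube nor the use of the product order, and these are precisely the ingredients needed to make your inductive step decrease any well-founded measure.
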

\begin{par}
    Note that the special fibre $X_s$ of a regular strict semi-stable $S$-curve
    is connected and so is the simplicial reduction set $\RK(X)$.
\end{par}

\begin{par}
    Before we approach the proof, we define a useful decomposition of $\Gamma^d$:
\end{par}
\begin{bem}
    \label{schnitt-graph-zerleg}
    Let $\Gamma$ be a graph without multiple simplices. According to \cref{sk-kolim-simpl}
    we identify the 1-simplices $\gamma_1 \in \Gamma_1$ with
    morphisms $i_{\gamma_1}: \Delta[1] \to \Gamma$.
    Since $\Gamma$ is without multiple simplices, $i_{\gamma_1}$ is injective 
    for each non-degenerate 1-simplex $\gamma_1$.
    Let now $\gamma:=(\gamma_1, \ldots \gamma_d) \in (\Gamma_1^{\mathrm{nd}})^d$ 
    be a $d$-tuple of 1-simplices.
    The product 
    \[
        i_\gamma := (i_{\gamma_1} \times \cdots \times \cdots \times i_{\gamma_d}):
        I^d \to \Gamma^d
    \]
    is injective as well and denoted by $i_\gamma$. 
    The set of all $i_\gamma$ gives a covering of $\Gamma^d$:
\end{bem}
\begin{prop}
    \label{schnitt-graph-quad}
    Let $\Gamma$ be a finite connected graph without multiple simplices,
    which contains at least two vertices. 
    Then the images of $i_\gamma$ for $\gamma \in (\Gamma_1^{\textrm{nd}})^d$ 
    yield a covering of $\Gamma^d$, 
    the \emph{covering by standard cubes}.
    An arbitrary $k$-simplex $\sigma \in (\Gamma^d)_k$ 
    is in the image of $i_\gamma$, iff
    all vertices of $\sigma$ are contained in the image of $i_\gamma$.
    If $\sigma \in (\Gamma^d)^{\textrm{nd}}_d$ is a non-degenerate $d$-simplex,
    there is exactly one $\gamma \in (\Gamma_1^{\textrm{nd}})^d$
    such that $\sigma$ is contained in the image of $i_\gamma$.
\end{prop}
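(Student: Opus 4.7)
\begin{par}
    The plan is to address the three claims in turn: first the characterization via vertices, then the covering property, and finally the uniqueness on non-degenerate $d$-simplices. All three pieces will rest on the absence of multiple simplices in $\Gamma$ combined with the combinatorial structure of $\Gamma^d$.
\end{par}

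\begin{par}
    First I would prove the vertex criterion. The ``only if'' direction is immediate from the fact that $i_\gamma$ is a morphism of simplicial sets. For ``if'', let $\sigma = (\sigma_1, \ldots, \sigma_d) \in (\Gamma^d)_k$ be such that every vertex of $\sigma$ lies in the image of $i_\gamma$. Then for each $i$, the vertex sequence of the $k$-simplex $\sigma_i \in \Gamma_k$ is a monotone sequence in $\{\gamma_i(0), \gamma_i(1)\}$. By \cref{ss-redkomp-einfach}, $\sigma_i$ is uniquely determined by its vertex sequence, and the same sequence read as a monotone map $[k] \to \{0,1\}$ describes a unique $k$-simplex $\tilde\sigma_i \in I_k$ with $i_{\gamma_i}(\tilde\sigma_i) = \sigma_i$. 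The tuple $(\tilde\sigma_1, \ldots, \tilde\sigma_d) \in (I^d)_k$ then maps to $\sigma$ under $i_\gamma$.
\end{par}

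\begin{par}
    The covering assertion is a direct consequence. Given any $\sigma = (\sigma_1, \ldots, \sigma_d) \in (\Gamma^d)_k$, each component $\sigma_i$ has one or two distinct vertices, since $\dim \Gamma = 1$. If two, they are the endpoints of a unique edge $\gamma_i \in \Gamma_1^{\mathrm{nd}}$ (using absence of multiple edges); if only one, I pick any edge $\gamma_i$ incident to that vertex, which exists because $\Gamma$ is connected and has at least two vertices. With this choice of $\gamma$, every vertex of $\sigma$ lies in the image of $i_\gamma$, hence by the vertex criterion so does $\sigma$ itself.
\end{par}

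\begin{par}
    The main obstacle is uniqueness. The claim I would prove is that for a non-degenerate $d$-simplex $\sigma = (\sigma_1, \ldots, \sigma_d) \in (\Gamma^d)^{\mathrm{nd}}_d$, each $\sigma_i$ has two distinct vertices; once this is settled, $\gamma_i$ is forced to be the unique edge through which $\sigma_i$ factors, again because $\Gamma$ has no multiple edges. To prove it, I would apply the Eilenberg--Zilber decomposition: since $\dim \Gamma = 1$, each $\sigma_i$ admits a unique presentation $\sigma_i = s_{\alpha_i}(\tau_i)$ with $\tau_i$ non-degenerate of dimension $0$ or $1$ and $\alpha_i \colon [d] \twoheadrightarrow [\dim \tau_i]$ a monotone surjection. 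The product simplex $\sigma$ is non-degenerate precisely when for every $j \in \{0, \ldots, d-1\}$ there is some index $i$ with $\alpha_i(j) \neq \alpha_i(j+1)$. A constant $\alpha_i$ (the case $\dim \tau_i = 0$) never realises such an inequality, while an $\alpha_i$ coming from an edge realises it for exactly one value of $j$, namely its cut point. Covering all $d$ values of $j$ with the $d$ available components therefore forces every $\tau_i$ to be an edge and the cut points to be pairwise distinct. Thus each $\sigma_i$ has two distinct vertices, and the edge $\gamma_i = \tau_i$ is uniquely determined.
\end{par}
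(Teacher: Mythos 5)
Your proof is correct. Note that the paper itself offers no argument here beyond ``elementary (see \cite{me})'', so there is no proof to compare against; your write-up actually supplies the missing details. The structure is sound: the vertex criterion, the covering via a choice of incident edges (using connectedness and the two-vertex hypothesis exactly where they are needed), and the Eilenberg--Zilber counting argument for uniqueness --- each surjection $[d]\twoheadrightarrow[1]$ kills exactly one potential degeneracy $s^j$, so all $d$ components must be degeneracies of genuine edges with pairwise distinct cut points --- is precisely the right way to pin down the unique $\gamma$. One small point worth making explicit in the vertex criterion: that the vertex sequence of $\sigma_i$ is \emph{monotone} with respect to the identification $\gamma_i(0)\mapsto 0$, $\gamma_i(1)\mapsto 1$ is not automatic from the hypothesis alone; it follows because in a $1$-dimensional simplicial set without multiple simplices a non-constant simplex is a degeneracy of a unique non-degenerate edge (no loops and no reversed duplicate edges, both excluded by injectivity of the vertex-set map in \cref{sk-einfach-kompl}), and that edge must then coincide with $\gamma_i$. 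Also, the determination of a simplex by its vertex sequence is better referenced to \cref{sk-einfach-kompl} and \cref{sk-morph-mfs-eind} than to \cref{ss-redkomp-einfach}, which concerns reduction sets of schemes rather than abstract graphs. Neither point is a gap in substance.
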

\begin{proof}
    elementary (see \cite{me}) 
\end{proof}

\begin{par}
    Let $C_1 \cdots \cdot \cdots C_k$ be a monomial; for the proof 
    of \cref{schnitt-moving} we call the cardinality 
    $\card\{C_1, \ldots C_k\}$
    the size of $C_1 \cdots \cdot \cdots C_k$.
    A monomial in $\KC^{k}(\Gamma^d)$ of size $k$ is obviously 
    a proper monomial.
    The proof is then carried out by induction on the maximum size of the monomials
    involved. 
    We may reduce it to the following lemma:
\end{par}

\begin{lem}
    \label{schnitt-moving-step}
    Let $\alpha=C_1 \cdot\cdots\cdot C_k \in Z^k(\Gamma^d)$ be a monomial of degree $k$
    and size $l < k$.
    Then there exists an element $\alpha' \in Z^k(\Gamma^d)$
    which consists of monomials of size $l+1$ or greater.
\end{lem}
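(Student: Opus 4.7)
The natural reading of the lemma is that $\alpha$ and $\alpha'$ should represent the same class in $\KC^k(\Gamma^d)$, i.e.\ that $\alpha \equiv \alpha' \pmod{\mathrm{Rat}(\Gamma^d)}$; this is the form needed to iterate in the proof of \cref{schnitt-moving}. My plan is to produce $\alpha'$ explicitly by applying one of the relations \cref{schnitt-chw2} or \cref{schnitt-chw3}, then handling the leftover size-$l$ residuals by induction.

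If the set $\{C_1, \ldots, C_k\}$ does not underlie any simplex of $\Gamma^d$, then \cref{schnitt-chw1} already gives $\alpha \equiv 0$ and we may take $\alpha' = 0$. Otherwise, since $l < k$ the pigeonhole principle provides a vertex $C \in (\Gamma^d)_0$ of multiplicity $m \geq 2$ in $\alpha$; write $\alpha = C^m \beta$ with $\beta$ a degree $(k-m)$ monomial not involving $C$.

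The basic move is obtained by multiplying the degree-two identity $C \cdot \sum_{v' \in (\Gamma^d)_0} v' \equiv 0$ from \cref{schnitt-chw2} by $C^{m-2}\beta$ and extracting the $v' = C$ summand. This rewrites
\[
    \alpha \;\equiv\; -\sum_{v' \neq C} C^{m-1}\, v'\, \beta \pmod{\mathrm{Rat}(\Gamma^d)}.
\]
Summands with $v' \notin \supp(\alpha)$ have support of size $l+1$, and those for which $\supp(\alpha) \cup \{v'\}$ fails to span a simplex vanish by \cref{schnitt-chw1}; the remaining summands, with $v' \in \supp(\alpha) \setminus \{C\}$, are residuals of size $l$ to be treated inductively.

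The main obstacle is termination of this recursion, since a careless choice can replace $\alpha$ by another monomial with the identical multiplicity profile. I would induct on the lexicographic order of the sorted multiplicity vector $(m_1 \geq \cdots \geq m_l)$ of the monomial, always choosing $C$ to be a vertex of strictly maximum multiplicity whenever this is possible. When several vertices $C, D, \ldots$ tie for the maximum, the move is switched from \cref{schnitt-chw2} to \cref{schnitt-chw3} applied to the pair $(D, C)$ with a coordinate $i$ such that $\pr_i(D) \neq \pr_i(C)$ (which exists because $D \neq C$ as points of $\Gamma^d$); this restricts the summation over $v'$ to those with $\pr_i(v') = \pr_i(C)$ and in particular excludes $D$, so that $D$'s multiplicity cannot increase and the maximal multiplicity strictly drops. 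Verifying that the invariant decreases in every remaining case amounts to a case analysis using the fact that, for any simplex in $\Gamma^d$, each coordinate projection of its vertex set takes at most two values in $\Gamma_0$; this restricts how many residual $v'$ can coincide with existing support vertices while still lying in a simplex with the support of $\alpha$.
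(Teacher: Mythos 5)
Your reading of the implicit requirement (that $\alpha'\equiv\alpha$ modulo $\mathrm{Rat}(\Gamma^d)$) and your initial reductions are fine, but the termination argument --- which is the entire content of the lemma --- breaks down. The move based on \cref{schnitt-chw2} rewrites $C^m\beta$ as $-\sum_{v'\neq C}C^{m-1}v'\beta$ with $v'$ ranging over \emph{all} vertices, so the excess multiplicity can flow onto any other support vertex. Concretely, take a chain $C<D<E$ in a standard cube and $\alpha=C^3D^2E$. Here $C$ has strictly maximal multiplicity, so your rule applies the \cref{schnitt-chw2}-move at $C$, and among the residuals is $-C^2D^3E$, which has the same sorted multiplicity vector $(3,2,1)$; applying your rule to that monomial (now $D$ is the strict maximum) returns $+C^3D^2E$ as a residual. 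The two rewritings combine to a tautology, so the procedure cycles on this branch and the proposed invariant does not decrease. The tie-breaking variant via \cref{schnitt-chw3} has a similar defect: the sum $\sum_{\pr_i(C')=\pr_i(C)}C'$ excludes $D$ but may contain a third support vertex $E$ with $\pr_i(E)=\pr_i(C)$, whose multiplicity then increases, so ``the maximal multiplicity strictly drops'' is not justified; and your final sentence defers exactly the case analysis in which the proof lives.

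What is missing is a directional control on where the excess multiplicity may go, and this is what the paper's argument supplies. Since the support spans a simplex, it lies in the image of a single standard cube $i_\gamma:I^d\to\Gamma^d$ (\cref{schnitt-graph-quad}) and is there a chain $C_1<\dots<C_l$ for the product order. \cref{schnitt-schieb}, which is built on the coordinate-restricted relation \cref{schnitt-chw3} rather than on \cref{schnitt-chw2}, replaces $C_1C_2^2$ resp.\ $C_1^2C_2$ by $\beta\, C_1C_2$ where every vertex occurring in $\beta$ is forced to lie strictly above $C_1$ resp.\ strictly below $C_2$ and to differ from the other vertex of the pair. Applied at the lowest index $j$ carrying multiplicity $\geq 2$, each residual either acquires a vertex outside the support (size $l+1$) or pushes the excess strictly in one direction along the chain, where after finitely many steps it falls off the end ($j=1$ via part (ii), or $j=l$ via part (i)) and only size-$(l+1)$ monomials remain; \cref{schnitt-chw2} is needed only in the degenerate case $l=1$. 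To repair your proof you would have to replace the all-vertices sum of \cref{schnitt-chw2} by the restricted sums of \cref{schnitt-chw3} throughout and measure progress by position in the chain rather than by the multiplicity profile.
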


\begin{par}
    The main ingredient for the proof is the following lemma, a kind of reduction to the
    standard cube:
\end{par}

\begin{lem}
    \label{schnitt-schieb}
    Let $\Gamma$ be a connected finite graph without multiple simplices,
    $\gamma \in (\Gamma_1^{\textrm{nd}})^d$ and
    $i_\gamma: I^d \to \Gamma^d$ the associated embedding 
    according to \cref{schnitt-graph-zerleg}.
    We endow the vertices of the standard cube $I^d$ 
    with the product ordering and denote it by $\leq$.
    Let $C_1, C_2 \in (I^d)_0$ be vertices of the standard cube
    with $C_1 < C_2$.
    Then the following holds:
    \begin{enumerate}[(i)]
        \item
            There is a 1-cycle $\beta \in Z^1(\Gamma^d)$
            such that
            \[ i_\gamma(C_1) i_\gamma(C_2)^2 - \beta i_\gamma(C_1)i_\gamma(C_2) \in
                \Rat(\Gamma^d) \]
            holds and we have
            \[ i_\gamma^*(\beta) = \sum_{i=1}^n [E_i] \]
            for a finite number of elements $E_i \in I^d_0$
            with $E_i > C_1$ and $E_i \neq C_2$.
        \item
            There is a 1-cycle $\beta' \in Z^1(\Gamma^d)$
            such that
            \[ i_\gamma(C_1)^2 i_\gamma(C_2) - \beta' i_\gamma(C_1)i_\gamma(C_2) \in
                \Rat(\Gamma^d) \]
            holds and we have
            \[ i_\gamma^*(\beta') = \sum_{i=1}^n [E'_i] \]
            for a finite number of elements $E'_i \in I^d_0$
            with $E'_i < C_2$ and $E'_i \neq C_1$.
    \end{enumerate}
\end{lem}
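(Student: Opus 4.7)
The strategy is to derive the desired $1$-cycles directly from relation \cref{schnitt-chw3} and then prune spurious terms using the chain-vanishing relation \cref{schnitt-chw1}. Since $C_1 < C_2$ in the product order on $(I^d)_0 = \{0,1\}^d$, fix an index $i \in \{1,\dots,d\}$ with $\pr_i(C_1) = 0$ and $\pr_i(C_2) = 1$. Writing $c_j := i_\gamma(C_j)$, the vertices $\pr_i(c_1) = i_{\gamma_i}(0)$ and $\pr_i(c_2) = i_{\gamma_i}(1)$ of $\Gamma$ are distinct because $\gamma_i$ is non-degenerate and hence $i_{\gamma_i}$ is injective on vertices. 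Thus relation \cref{schnitt-chw3} applied at index $i$ yields
\[
    c_1 c_2^2 \;\equiv\; -\,c_1 c_2 \sum_{\substack{D \in (\Gamma^d)_0\\ D \neq c_2,\ \pr_i(D)=\pr_i(c_2)}} D \pmod{\Rat(\Gamma^d)}.
\]

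For part (i) I define
\[
    \beta \;:=\; -\!\!\!\sum_{\substack{D \in (\Gamma^d)_0\\ D \neq c_2,\ \pr_i(D)=\pr_i(c_2)}} D \;+\; \sum_{\substack{E \in (I^d)_0,\ E \neq C_2\\ \pr_i(E) = 1,\ E \not> C_1}} i_\gamma(E).
\]
Then $c_1 c_2^2 - \beta c_1 c_2 \in \Rat(\Gamma^d)$ will follow once I show that $c_1 i_\gamma(E) \in \Rat(\Gamma^d)$ for every $E$ indexing the correction sum. For such $E$ one has $\pr_i(E) = 1 > 0 = \pr_i(C_1)$, so in particular $E \not\leq C_1$; combined with $E \not> C_1$, the vertices $E$ and $C_1$ are incomparable in $I^d$, so $\{C_1, E\}$ is not the vertex set of any simplex in $I^d$. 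By \cref{schnitt-graph-quad} applied to $i_\gamma: I^d \to \Gamma^d$, the pair $\{c_1, i_\gamma(E)\}$ is therefore not the vertex set of a simplex in $\Gamma^d$, and relation \cref{schnitt-chw1} gives $c_1 i_\gamma(E) \in \Rat(\Gamma^d)$. As to the pullback, $i_\gamma^*$ annihilates generators outside $i_\gamma((I^d)_0)$, so the two sums defining $\beta$ telescope to
\[
    i_\gamma^*(\beta) \;=\; -\!\!\sum_{\substack{E \in (I^d)_0,\ E \neq C_2\\ \pr_i(E) = 1,\ E > C_1}} [E],
\]
a finite integer combination of vertices $E_i$ satisfying $E_i > C_1$ and $E_i \neq C_2$, as required.

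Part (ii) is entirely symmetric: at the same index $i$ apply \cref{schnitt-chw3} summing over $D$ with $\pr_i(D) = \pr_i(c_1)$ to get $c_1^2 c_2 \equiv -c_1 c_2 \sum_{D \neq c_1,\,\pr_i(D) = \pr_i(c_1)} D$, and correct by adding $\sum_{E \neq C_1,\ \pr_i(E)=0,\ E \not< C_2} i_\gamma(E)$. Here $\pr_i(E) = 0 < 1 = \pr_i(C_2)$ prevents $E \geq C_2$, and together with $E \not< C_2$ forces $E$ and $C_2$ to be incomparable, so $c_2 i_\gamma(E) \in \Rat(\Gamma^d)$ by the same appeal to \cref{schnitt-chw1}. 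The principal subtlety in both parts is recognising that the raw output of \cref{schnitt-chw3} contains vertices $i_\gamma(E)$ incomparable to $C_1$ (resp.\ $C_2$), whose presence in the pullback would violate the prescribed support condition; these are eliminated precisely because $c_1 i_\gamma(E)$ (resp.\ $c_2 i_\gamma(E)$) is already rationally zero by the cube-decomposition characterisation of simplices in $\Gamma^d$ provided by \cref{schnitt-graph-quad}.
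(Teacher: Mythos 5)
Your proof is correct and follows essentially the same route as the paper's: pick a coordinate $i$ where $C_1$ and $C_2$ differ, invoke relation \cref{schnitt-chw3} at that coordinate, and then discard the summands $i_\gamma(E)$ with $E$ incomparable to $C_1$ (resp.\ $C_2$) because $\{C_1,E\}\notin(I^d)_S$ and \cref{schnitt-graph-quad} transports this to $\Gamma^d$, where \cref{schnitt-chw1} kills the product. The overall minus sign you flag in $i_\gamma^*(\beta)$ is present in the paper's own computation as well (its $\tilde\beta$ carries it) and is harmless for the only use of the lemma, namely the support/size bookkeeping in the moving lemma.
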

\begin{proof}
    \begin{par}
        Since $C_1 < C_2$, there exists $j \in \{1, \ldots, d\}$
        such that $\pr_j(C_1) < \pr_j(C_2)$.
        We set
        \[
            \tilde\beta := -\sum_{\substack{
                    C' \in (\Gamma^d)_0 \setminus \{i_\gamma(C_2)\} \\
                    \pr_j(C') = \pr_j(i_\gamma(C_2))
                }}[C']
        \]
        and get according to \cref{schnitt-chw3} in \cref{schnitt-chw}
        \begin{equation}
            \label{schnitt-schieb-gl}
            i_\gamma(C_1)i_\gamma(C_2)^2 - \tilde\beta i_\gamma(C_1)i_\gamma(C_2) \in
            \Rat(\Gamma^d).
        \end{equation}
        Applying the injection $i_\gamma$ we get
        \[
            i_\gamma^*(\tilde\beta) = -\sum_{\substack{
                    C \in (I^d)_0 \setminus \{C_2\} \\
                    \pr_j(C') = \pr_j(C_2)
                }}C.
        \]
        Each of the elements $C \in (I^d)_0 \setminus \{C_2\}$
        with $\pr_j(C) = \pr_j(C_2)$ 
        is either bigger than $C_1$ or not comparable with $C_1$.
        If $C$ is not comparable with $C_1$, then
        $C_1 C \in \Rat(I^d)$ holds
        and according to \cref{schnitt-graph-quad} also
        $i_\gamma(C_1) i_\gamma(C) \in \Rat(\Gamma^d)$.
        Therefore we can remove these elements from $\tilde\beta$
        without changing the equation \cref{schnitt-schieb-gl}.
        The cycle
        \[
            \beta := \sum_{\substack{
                    C' \in (\Gamma^d)_0 \setminus \{i_\gamma(C_2)\} \\
                    \pr_j(C') = \pr_j(i_\gamma(C_2)) \\
                    C' \not \in \{ i_\gamma(C) \mid C \not > C_1\}
                }}C'
        \]
        satisfies the claim. 
    \end{par}
    \begin{par}
        The proof of (b) is done analogously.
    \end{par}
\end{proof}

\begin{proof}[Proof of \cref{schnitt-moving-step}]
    \begin{par}
        If $\Gamma$ consists only of one vertex,
        the statement is trivial.
        In this case $\Gamma^d$ has only one vertex
        and according to \cref{schnitt-chw2} in \cref{schnitt-chw}
        each monomial of degree $k \geq 2$ is in $\mathrm{Rat}(\Gamma^d)$.
    \end{par}
    \begin{par}
        Let 
        $\alpha = \tilde C_1^{a_1} \cdot \cdots \cdot \tilde C_l^{a_l} \in Z^k(\Gamma^d)$
        be a monomial of degree $k$ where $C_1, \ldots C_l$ are different vertices of
        $(\Gamma^d)_0$.
        If $l=1$, then a single application of \cref{schnitt-chw2} yields
        an equivalent cycle of size $2$, i.e.,
        consisting of monomials with at least two different factors.
        Thus we may assume from now on that $l \geq 2$ and $\Gamma$ is connected with
        at least two vertices.
    \end{par}
    \begin{par}
        If $l \geq 2$ we can assume
        that $\{\tilde C_1, \ldots \tilde C_l\}$ is a simplex in $\Gamma^d$,
        otherwise we would have $\alpha=0$.
        According to \cref{schnitt-graph-zerleg} there exists an embedding
        of the standard cube 
        $i_\gamma: I^d \to \Gamma^d$
        such that
        $\tilde C_1, \ldots, \tilde C_l$ are in its image.
        We denote the preimages with $C_1, \ldots C_l$. 
        Since theses preimages constitute a simplex in $I^d$,
        we may assume that $C_1 < \ldots < C_l$ holds
        (according to the product ordering in $I^d$).
        We get
        \[ 
            i_\gamma^*\alpha = C_1^{a_1} \cdot\cdots\cdot C_l^{a_l}.
        \]
        Let us denote by $j(\alpha)$ the minimal index
        \[
            j = j(\alpha) = \min\{ j' \in \{1, \ldots l\} \mid a_{j(\alpha)} \geq 2\}.
        \]
        where a component occurs twice.
        It suffices to show
        that there is a 1-cycle
        \[ 
            \beta=\sum_{\tilde C' \in (\Gamma^d)_0} b_{C'} C' \in Z^1(\Gamma^d), 
            \quad b_{C'} \in \IZ
        \] 
        such that
        \[
            \alpha - 
            \beta C_1^{a_1} \cdot \ldots \cdot C_j^{a_j -1} \cdot \ldots \cdot C_l^{a_l}
            \in \Rat(\Gamma^d)
        \]
        and $b_{\tilde C_i}=0$ holds for all $i \in \{1, \ldots, l\}$.
        We show this once again by induction on $j(\alpha)$:
    \end{par}
    \begin{par}
        If $j=j(\alpha) < l$, we choose $\beta'$ according to \cref{schnitt-schieb} (ii)
        such that
        \[ 
            C_j^2C_{j+1} - \beta' C_j C_{j+1}  \in \Rat(\Gamma^d)
        \]
        with $i^* \beta' = \sum E_i$ for $C_j \neq E_i < C_{j+1}$.
        We dissect the element
        \[
            \beta C_1 \cdot\cdots\cdot C_j^{a_j-1} C_{j+1}^{a_{j+1}} \cdot\cdots\cdot C_l^{a_l} 
        \]
        into monomials $\sum_{i} \alpha_i$.
        Each monomial $\alpha_i$ either contains an additional factor and has therefore a
        bigger size than $\alpha$ or is of the form
        $C_1 \cdot\cdots\cdot C_{j-1} C_j^{\tilde a_j} \cdot\cdots\cdot C_l^{\tilde a_l}$
        with $\tilde a_j < a_j$.
        If we execute the same substitution with the latter monomials,
        we finally get $\tilde a_j=1$ and therefore
        $j(\alpha_i) < j(\alpha)$.
        The proposition follows then by induction.
    \end{par}
    \begin{par}
        On the other hand if $j=j(\alpha) = l$, we proceed analogous using
        \cref{schnitt-schieb} (i).
        We choose $\beta' \in Z^1(\Gamma, <, d)$
        such that
        \[ 
            C_{j-1}C_j^2 - \beta' C_{j-1}C_j \in \Rat(\Gamma^d)
        \]
        with $i^* \beta' = \sum_i E_i$ for $C_j \neq E_i > C_{j-1}$ for all $i$ holds.
        Then each monomial of the term
        \[ \beta C_1 \cdot\cdots\cdot C_j^{a_j-1} C_{j+1}^{a_{j+1}} \cdot\cdots\cdot C_l^{a_l} \]
        has a bigger size than $\alpha$.
    \end{par}
\end{proof}

\subsection{The Local Degree Map} 

\begin{par}
    With the help of the moving lemma we can define a degree map on the combinatorial
    Chow ring, which is compatible with the degree map of \cref{schnitt-deg}.
    We define it first for the standard $1$-simplex 
    $I:=\Delta[1]$.
    This graph can be obtained as the reduction set of the proper regular strict semi-stable scheme
    $\bar L := \Proj{R[z_0,z_1,t]/(z_0z_1 - \pi t^2})$ (it is the projective completion of
    the standard scheme $L$ considered in \cref{desi-aufprod}).
    Denote by $\bar M$ the desingularization of $\bar L^d$ according to
    \cref{desi-prodkomp}.
    Since $\bar M$ is a proper $S$-scheme, 
    there exists a local degree map
    \[
        \ldeg_{\bar M}: \CH^{d+1}_{\bar M_s}(\bar M) \to \IZ
    \]
    according to \cref{schnitt-deg}. 
    Furthermore we have $\RK(\bar M) = I^d$ and therefore a morphism of graded rings
    $\varphi_{\bar M}: \KC(I^d) \to \CH^{\cdot}_{\bar M_s}(\bar M)$
    by \cref{schnitt-vergl-chow}.
\end{par}
\begin{defn}
    We call the morphism of $\IZ$-modules
    \[
        \ldeg_{(I^d)} := \ldeg_{\bar M} \circ \varphi_{\bar M}: \KC(I^d) \to \IZ
    \]
    \emph{local degree map}.
\end{defn}
\begin{par}
    To define a local degree map for products of arbitrary graphs, we use
    the decomposition into standard cubes from \cref{schnitt-graph-quad}:
\end{par}
\begin{defn}
    \label{schnitt-grad-defn}
    Let $\gamma$ be a finite graph and $d \in \IN$. 
    For each $\gamma = (\gamma_1, \ldots \gamma_d) \in (\Gamma_1^{\textrm{nd}})^d$ denote 
    by $i_\gamma: I^d \to \Gamma^d$ the associated embedding of the standard cube 
    as defined in \cref{schnitt-graph-zerleg}
    and by $i_\gamma^*$ the respective morphism of graded rings
    $i_\gamma^*: \KC(\Gamma^d) \to \KC(I^d)$ as defined in \cref{schnitt-C-funktor}.
    The \emph{local degree map}
    $\ldeg_{\Gamma^d}: \KC(\Gamma^d) \to \IZ$ is then defined by
    \[ \ldeg_{(\Gamma,<,d)} := \sum_{\gamma \in (\Gamma_1^{\textrm{nd}})^d} \ldeg_{\bar M} \circ\; i_\gamma^*. \]
\end{defn}
\begin{prop}
    \label{schnitt-grad-vergleich}
    Let $X$ be a regular strict semi-stable curve over $S$ with total ordering $<$
    on $X^{(0)}$ and $W:=W(X,<,d)$ the associated model of the $d$\nobreakdash-fold
    product given by \cref{desi-prodkomp}.
    We denote the morphism between the Chow ring and the combinatorial Chow ring
    by
    $\varphi_W: \KC(\Gamma(X)^d) \to \CH_{W_s}(W)$.
    Then the local degree map of $\KC(\Gamma(X)^d)$ 
    coincides with the degree map of $\CH_{W_s}$, i.e.,
    \[ \ldeg_{(\Gamma(X)^d)} = \ldeg_{W} \circ\; \varphi_W. \]
\end{prop}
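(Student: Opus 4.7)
My plan is to exploit the moving lemma (\cref{schnitt-moving}) to reduce the equality of $\IZ$-linear maps on $\KC^{d+1}(\Gamma(X)^d)$ to checking it on proper monomials $\alpha = C_0 \cdot C_1 \cdots C_d$ with pairwise distinct factors. Since $\Gamma(X)$ is connected (as $X_s$ is connected), the moving lemma applies and the proper top-degree monomials generate this group, so it suffices to verify the identity on such $\alpha$.

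Fix a proper monomial $\alpha$. If $\{C_0, \ldots, C_d\}$ is not the vertex set of any simplex of $\Gamma(X)^d$, then by relation \cref{schnitt-chw1} the class of $\alpha$ already vanishes in $\KC(\Gamma(X)^d)$, so both sides automatically evaluate to $0$. Otherwise, since $\Gamma(X)^d$ has no multiple simplices and the $d+1$ factors are distinct, the set $\{C_0, \ldots, C_d\}$ is exactly the vertex set of a non-degenerate $d$-simplex of $\Gamma(X)^d$. Then \cref{schnitt-graph-quad} supplies a unique $\gamma \in (\Gamma(X)_1^{\mathrm{nd}})^d$ whose standard-cube embedding $i_\gamma: I^d \to \Gamma(X)^d$ contains all $d+1$ vertices in its image. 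Since $i_\gamma$ is injective on vertices, $i_\gamma^*(C_i)$ equals a single vertex $C'_i \in (I^d)_0$, while $i_{\gamma'}^*(\alpha) = 0$ for every other $\gamma'$ (because some $C_i$ is missing from the image of $i_{\gamma'}$). Hence
\[
    \ldeg_{(\Gamma(X)^d)}(\alpha) = \ldeg_{\bar M}\bigl(\varphi_{\bar M}(C'_0 \cdots C'_d)\bigr) = \ldeg_{\bar M}\bigl([C'_0] \cdots [C'_d]\bigr).
\]

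On the geometric side, $\ldeg_W(\varphi_W(\alpha)) = \ldeg_W([C_0] \cdots [C_d])$. Both of these evaluate to $1$ by \cref{chow-semi-eigtl} applied to $W$ and $\bar M$ respectively, since in each case $d+1$ pairwise distinct components with non-empty common intersection meet properly with multiplicity $1$. Therefore both sides agree on every proper monomial, and hence on all of $\KC^{d+1}(\Gamma(X)^d)$.

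The main obstacle I foresee is the interplay between the moving lemma and the cube decomposition: the local degree is defined as a sum over all standard cubes, so we need exactly one summand to contribute on each proper top-degree monomial. The key input that makes this work is the uniqueness clause in \cref{schnitt-graph-quad}, which ensures each non-degenerate $d$-simplex sits in exactly one cube --- without it, contributions would overcount. Once this uniqueness is in hand, the identification $\ldeg_W(\varphi_W(\alpha)) = \ldeg_{\bar M}(\varphi_{\bar M}(i_\gamma^*(\alpha)))$ on the relevant summand reduces to \cref{chow-semi-eigtl} on either side, and the moving lemma takes care of all cycles that are not already of this proper top-degree form.
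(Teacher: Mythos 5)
Your proposal is correct and follows essentially the same route as the paper: reduce to proper top-degree monomials via the moving lemma, evaluate the geometric side to $1$ by \cref{chow-semi-eigtl}, and use the uniqueness clause of \cref{schnitt-graph-quad} to see that exactly one standard cube contributes $1$ to $\ldeg_{(\Gamma(X)^d)}$. Your explicit treatment of the case where $\{C_0,\ldots,C_d\}$ is not a simplex (both sides vanish by relation \cref{schnitt-chw1}) is a small completeness point the paper leaves implicit, but the argument is otherwise identical.
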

\begin{proof}
    By \cref{schnitt-moving} it suffices to show that the maps coincide for
    proper monomials.
    Let $C_0, \ldots C_d \in \RK(W)$ be different irreducible components of $W_s$
    with $\{C_0, \ldots C_d\} \in \RK_S(W)$,
    thus $C_0 \cap \cdots \cap C_d \neq \emptyset$.
    According to \cref{chow-semi-eigtl} all proper intersections in $\CH_{W_s}(W)$
    have multiplicity 1, therefore
    $\ldeg_W \circ\; \varphi_W(C_0 \cdot\cdots\cdot C_d) = 
    \ldeg_W ( [C_0] \cdot\cdots\cdot [C_d]) = 1$
    holds.
    To determine the left hand side, we examine the $d$-simplex 
    given by the vertices $(C_0, \ldots C_d)$. 
    Since this simplex is non-degenerate, there is 
    exactly one tuple $\gamma=(\gamma_1, \ldots \gamma_d) \in (\Gamma(X)^d)$ such that
    $i_\gamma^*:I^d \to \Gamma^d$ maps the monomial 
    $C_0 \cdot \cdots \cdot C_d$ to a proper monomial in $\KC(I^d)$ (see
    \cref{schnitt-graph-quad}).
    For each $\gamma' \in (\Gamma_1^{\textrm{nd}})^d$ with $\gamma \neq \gamma'$ one has
    $i_{\gamma'}^*(C_0 \cdot \cdots \cdot C_d) = 0$.
    Therefore 
    \begin{align*}
    \ldeg_{(\RK(X)^d)}(C_0 \cdot\cdots\cdot C_d) 
        &= \sum_{\tilde s \in \Gamma_1^d} \ldeg_{(I,<,d)}\circ\; i_{\tilde s}^*(C_0
        \cdot\cdots\cdot C_d)\\
        &= \ldeg_{(I,<,d)} \circ\; i_s (C_0 \cdot\cdots\cdot C_d) = 1. 
    \end{align*}
\end{proof}


\subsection{Explicit Calculations in $\KC(I^d)$} 

\label{schnitt-konkret}

\begin{par}
    By the preceding paragraph the calculation of $\KC(\Gamma^d)$ for arbitrary graphs
    can be reduced to the calculation of $\KC(I^d)$, the product of the 1-simplex.
    We introduce first a convenient notation for the elements of the Chow ring and
    introduce an alternative generator set of $\KC(I^d)$ using 
    a discrete fourier transform.
    For this alternative set of generators we deduce some relations.
\end{par}
\begin{par}
    We denote the vertices of $I$ as usual with $C_0$ and $C_1$ and endow them 
    with the ordering $C_0 < C_1$. For each vector $v=(v_1, \ldots v_d) \in \IF_2^d$
    let $C_v$ denote the vertex from $I^d$ with $\pr_i(C_v) = C_{v_i}$.
    Furthermore we call the vectors of the standard basis of $\IF_2^d$ as usual
    $e_1 := (1, 0, \ldots, 0), \ldots e_d:=(0, \ldots, 0, 1)$ and 
    set $E:=\{e_1, \ldots e_d\}$.
\end{par}
\begin{par}
    With this notation the set $\{ C_v \mid v \in \IF_2^d\}$ is 
    a generating set of 
    $\KC(I^d)_\IQ:=(\KC(I^d)) \otimes_\IZ \IQ$ 
    and the relations from \cref{schnitt-chw}
    can be written in the following form:
    \begin{align}
        C_v C_w & = 0 & & \parbox{7cm}{
            $\forall v,w \in \IF_2^d:
            \exists i,j \in \{1, \ldots, d\}$\\
            with
            $(v_i,v_j)=(w_j,w_i)=(1,0)$,
            } \label{schnitt-glb}\\
        \left( \sum_{w \in \IF_2^d} C_w \right) C_v & = 0 & & 
        \textrm{for all $v \in \IF_2^d$}, \label{schnitt-gla}\\
        C_v C_{v'} \left(\sum_{\substack{w \in \IF_2^d \\ w_i = v_i}} C_w\right) & = 0 & &
            \parbox{7cm}{
                for all $i \in \{1, \ldots, d\}$ and $v,v' \in \IF_2^d$ \\
                with $v_i \neq v'_i$.
        } \label{schnitt-glc} 
    \end{align}
\end{par}
\begin{bem}
    For relation \cref{schnitt-glb} it suffices to consider pairs
    $\{v,w\}$ of vectors: 
    If $C_1, \ldots C_k \in (I^d)_0$ is a tuple 
    with $\{C_1, \ldots C_k\} \not\in (I^d)_S$,
    then there exists a pair $i,j \in \{1, \ldots k\}$
    such that $\{C_i, C_j\} \not \in (I^d)_s$.
\end{bem}
\begin{par}
    For the calculation of the local degree it suffices by \cref{schnitt-moving}
    to consider proper intersections of $d+1$ elements,
    that is the product of $d+1$ pairwise different elements
    $C_{v_0}, \ldots, C_{v_d} \in \KC(I^d)$, 
    such that $\{ C_{v_0}, \ldots, C_{v_d}\} \in \KC(I^d)_S$.
    By the description of $\KC(I^d)$ in \cref{sk-prod-poset} 
    the last condition means that $v_0, \ldots, v_d$ is 
    (up to permutation) an ascending chain
    according to the product ordering. 
    Together with \cref{chow-semi-eigtl} 
    the local degree is uniquely defined by setting
    \begin{equation}
        \label{schnitt-konk-deg}
        \ldeg(C_{v_0} C_{v_1} \cdots C_{v_d}) = 1
    \end{equation}
    if $v_0 < v_1 < \ldots < v_d$ according to the product ordering.
\end{par}
%

\begin{defn}
    Let $d \in \IN$. For each vector $v \in \IF_2^d$ we denote by
    $F_v$ the element
    \[
        F_v := \sum_{w \in \IF_2^d}(-1)^{\angles{v,w}}C_w.
    \]
\end{defn}
\begin{bem}
    A straightforward computation in $\KC(I^d)_\IQ$ shows
    \[
        C_v = \frac{1}{2^d}\sum_{w \in \IF_2^d}(-1)^{\angles{v,w}} F_w.
    \]
    Therefore the set $\{F_v \mid v \in \IF_2^d\}$ is also a system of generators
    for $\KC(I^d)_\IQ$.
\end{bem}
\begin{par}
    We deduce some useful relations of the $F_v$:
\end{par}

\begin{prop}
    \label{schnitt-F}
    Let $v,v' \in \IF_2^d$ and $e,e' \in E$ be two base vectors of the standard basis.
    Then the following relations hold in $\KC(I^d)_\IQ$:
    \begin{align}
        F_0 F_v  &= 0, \label{schnitt-F-gl1} \\
        (F_{v+e+e'} - F_v)(F_{v'+e+e'} - F_{v'}) &= (F_{v+e}-F_{v+e'})(F_{v'+e}-F_{v'+e'}),
        \label{schnitt-F-gl2}\\
        F_e (F_v + F_{v+e})(F_v' - F_{v'+e})  &= 0. \label{schnitt-F-gl3}
    \end{align}
\end{prop}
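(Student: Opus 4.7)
The plan is to expand each $F_v$ as a signed sum of the generators $C_w$ and identify each of the three identities as a direct consequence of one of the defining relations \cref{schnitt-glb}, \cref{schnitt-gla}, \cref{schnitt-glc} of $\KC(I^d)_\IQ$. The three identities correspond rather neatly to these three relations in turn.

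The identity \cref{schnitt-F-gl1} is essentially immediate: by definition $F_0 = \sum_{w}C_w$, so applying \cref{schnitt-gla} to each summand of $F_v = \sum_w (-1)^{\angles{v,w}} C_w$ gives $F_0 F_v = 0$.

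For \cref{schnitt-F-gl3} the key observation is the parity decomposition
\[
    F_v + F_{v+e} = 2\sum_{w:\, w_e = 0}(-1)^{\angles{v,w}} C_w,
    \qquad
    F_{v'} - F_{v'+e} = 2\sum_{w':\, w'_e = 1}(-1)^{\angles{v',w'}} C_{w'},
\]
so that their product is a linear combination of products $C_w C_{w'}$ with $w_e = 0$, $w'_e = 1$. For any such pair I would apply \cref{schnitt-glc} with index $i = e$ and vectors $w, w'$ to obtain $\sum_{u:\, u_e = 0} C_u C_w C_{w'} = 0$; combined with \cref{schnitt-gla} this also forces $\sum_{u:\, u_e = 1} C_u C_w C_{w'} = 0$. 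Since $F_e = \sum_u (-1)^{u_e} C_u$, it then annihilates $C_w C_{w'}$, and the identity follows term by term.

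The identity \cref{schnitt-F-gl2} is the most interesting. Writing $e = e_i$ and $e' = e_j$, a direct expansion along the same lines yields $F_{v+e+e'} - F_v = -2(A_v + B_v)$ and $F_{v+e} - F_{v+e'} = 2(A_v - B_v)$, where
\[
    A_v := \sum_{w:\, w_i = 0,\, w_j = 1}(-1)^{\angles{v,w}} C_w,
    \qquad
    B_v := \sum_{w:\, w_i = 1,\, w_j = 0}(-1)^{\angles{v,w}} C_w.
\]
The difference of the two sides of \cref{schnitt-F-gl2} then collapses to $8\,(A_v B_{v'} + B_v A_{v'})$. Every monomial appearing in these cross-terms is $\pm\, C_w C_{w'}$ with $(w_i, w_j) = (0,1)$ and $(w'_i, w'_j) = (1,0)$, so the pair $(w, w')$ satisfies the incompatibility hypothesis of \cref{schnitt-glb} at the indices $i, j$, and $C_w C_{w'} = 0$.

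The main obstacle is the bookkeeping in \cref{schnitt-F-gl2}: one has to organise the four factors so that after cancellation only the cross-products $A_v B_{v'}$ and $B_v A_{v'}$ survive, and then recognise that precisely these surviving monomials are the ones killed by \cref{schnitt-glb}. Once this identification is made, all three identities reduce to straightforward bookkeeping on top of the defining relations.
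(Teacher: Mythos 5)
Your proposal is correct and follows essentially the same route as the paper: \cref{schnitt-F-gl1} from \cref{schnitt-gla}, \cref{schnitt-F-gl2} by the parity expansion supported on $\{w : w_i \neq w_j\}$ with the surviving cross-terms killed by \cref{schnitt-glb}, and \cref{schnitt-F-gl3} by splitting into the $w_e=0$ and $w_e=1$ parts and applying \cref{schnitt-glc} together with \cref{schnitt-gla}. The only differences are cosmetic bookkeeping (your $A_v,B_v$ split versus the paper's single index set $J$ with the extra sign $(-1)^{\angles{e,w}}$, and your even/odd split of $F_e$ versus the paper's use of $F_e=(F_0+F_e)-F_0$).
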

\begin{proof}
    \begin{par}
        Equation \cref{schnitt-F-gl1} follows directly 
        from \cref{schnitt-gla} using $F_0 = \sum_{v \in \IF_2^d}C_v$.
    \end{par}
    \begin{par}
        For the proof of \cref{schnitt-F-gl2} choose $i,j \in \{1, \ldots d\}$
        such that $e=e_i, e'=e_j$ and denote by 
        $J \subseteq \IF_2^d$ the subset
        $J:= \{w \in \IF_2^d \mid w_i \neq w_j \}$.
        Then one has
        \begin{align*}
            (F_{v} - F_{v+e+e'}) & = 2 \sum_{w \in J} (-1)^{\angles{v,w}} C_w, \\
            (F_{v+e} - F_{v+e'}) & = 2 \sum_{w \in J} (-1)^{\angles{v,w}}(-1)^{\angles{e,w}} C_w.
        \end{align*}
        Therefore one calculates
        \begin{align*}
            & (F_{v} - F_{v+e+e'})(F_{v'} - F_{v'+e+e'}) 
            - (F_{v+e}- F_{v+e'})(F_{v'+e} - f_{v'+e'}) \\
            = &4 \sum_{w,w' \in J} (-1)^{\angles{v,w}+\angles{v',w'}} ( 1 
            - (-1)^{\angles{e,w}+\angles{e,w'}})C_wC_{w'} \\
            = &8 \sum_{\substack{w,w' \in J\\ w_i \neq w'_i}}
            (-1)^{\angles{v,w}+\angles{v',w'}}C_wC_{w'}
            = 0.
        \end{align*}
        The elements of the sum in the last equation vanish
        by \cref{schnitt-glb} and the relations
        $w_i \neq w'_i, w_j \neq w'_j$.
    \end{par}
    \begin{par}
        After all, equation \cref{schnitt-F-gl3} can be deduced 
        from 
        \begin{align}
            (F_0 + F_{e_i})   &= \sum_{\substack{w \in \IF_2^d\\ w_i = 0}} C_w,
            \label{schnitt-F-hilf1}\\
            (F_v + F_{v+e_i}) &= \sum_{\substack{w \in \IF_2^d\\ w_i = 0}}
            (-1)^{\angles{v,w}}C_w
            \textrm{ and} \nonumber \\
            (F_v - F_{v+e_i}) &= \sum_{\substack{w \in \IF_2^d\\ w_i = 1}}
            (-1)^{\angles{v,w}}C_w. \nonumber
        \end{align}
        By \cref{schnitt-glc} one has
        \[ (F_v + F_{v+e_i})(F_w-F_{w+e_i})(F_0 + F_{e_i}) = 0 \]
        and by $F_0 F_{v'} = 0$ for each $v' \in \IF_2^d$ 
        one gets the claim.
    \end{par}
\end{proof}
\begin{bem}
    \label{schnitt-F0-weg}
    Since $\{F_v \mid v \in \IF_2^d\}$ is a system of generators for $\KC(I^d)_\IQ$,
    equation \cref{schnitt-F-gl1} already implies 
    \[ F_0 \cdot \alpha = 0 \]
    for each element $\alpha \in \KC(I^d)_\IQ$ with positive degree.
\end{bem}

\begin{par}
    In the following situation the degree can be calculated generally:
\end{par}
\begin{prop}
    \label{schnitt-F-deg}
    In $\KC(I^d)_\IQ$ the equation
    \[ \ldeg(F_{v} \prod_{i=1}^d F_{e_i}) = \begin{cases}
            (-4)^{d} & \text{if $v=(1,\ldots, 1)$},\\
                0 & \text{otherwise}
        \end{cases}
    \]
    holds.
\end{prop}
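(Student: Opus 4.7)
For $v = 0$, $F_0 \cdot F_{e_1} = 0$ by \cref{schnitt-F-gl1}, hence $\ldeg = 0$. For $v \neq 0$, set $A_i := \sum_{u:\, u_i = 0} C_u$ and $B_i := \sum_{u:\, u_i = 1} C_u$, so $F_{e_i} = A_i - B_i = 2A_i - F_0$ (using $A_i + B_i = F_0$). Expanding $F_v \prod_i (2A_i - F_0)$, every term containing a factor of $F_0$ vanishes because $F_v \cdot F_0 = 0$ (\cref{schnitt-F-gl1}), leaving
\[
F_v \prod_i F_{e_i} = 2^d F_v \prod_i A_i.
\]

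\textbf{Stage 2.} Using the Fourier identity $F_v = \sum_w (-1)^{\langle v, w\rangle} C_w$, the proposition reduces to the $C_w$-formula
\[
\ldeg\bigl(C_w \prod_i A_i\bigr) = (-1)^{d+|w|} \quad \text{for every } w \in \IF_2^d,
\]
since summing and using $|w| \equiv \langle (1,\ldots,1), w\rangle \pmod 2$ together with the orthogonality $\sum_w (-1)^{\langle v + (1,\ldots,1), w\rangle} = 2^d \delta_{v, (1,\ldots,1)}$ yields
\[
\ldeg\bigl(F_v \prod_i F_{e_i}\bigr) = 2^d (-1)^d \sum_w (-1)^{\langle v + (1,\ldots,1), w\rangle} = 2^{2d}(-1)^d \delta_{v, (1,\ldots,1)} = (-4)^d \delta_{v, (1,\ldots,1)}.
\]

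\textbf{Stage 3.} For the $C_w$-formula, the relation $(A_i + B_i) C_w = F_0 C_w = 0$ (by \cref{schnitt-chw2}) gives $A_i C_w = -B_i C_w$. Replacing each $A_i$ by $-B_i$ whenever $w_i = 0$ and leaving it otherwise yields
\[
C_w \prod_i A_i = (-1)^{d - |w|}\, C_w \prod_i \pi_i^{*}(I_{1 - w_i}),
\]
where $\pi_i^{*}(I_{1 - w_i}) = A_i$ if $w_i = 1$ and $= B_i$ if $w_i = 0$. Since $(-1)^{d - |w|} = (-1)^{d + |w|}$, the task reduces to showing the transverse-intersection number $\ldeg\bigl(C_w \prod_i \pi_i^{*}(I_{1 - w_i})\bigr) = 1$. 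This last step is the main obstacle: geometrically, $C_w$ is the strict transform of $I_{w_1} \times \cdots \times I_{w_d}$ in $\bar M$ and each $\pi_i^{*}(I_{1 - w_i})$ meets $C_w$ in the fiber of $\pi_i|_{C_w}$ over the double point $I_{w_i} \cap I_{1 - w_i} \subset \bar L_s$, so the $(d+1)$-fold intersection concentrates at a single reduced point of $\bar M_s$, with local degree $1$ by \cref{chow-semi-eigtl}. Alternatively, one iterates \cref{schnitt-chw2} and \cref{schnitt-chw3} in $\KC(I^d)$ to rewrite $C_w \prod_i \pi_i^{*}(I_{1 - w_i})$ as a proper monomial corresponding to a maximal chain through $w$, whose local degree is $1$ by \cref{schnitt-konk-deg}.
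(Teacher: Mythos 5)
Your Stages 1 and 2 are correct and amount to the same Fourier decomposition the paper itself uses: everything is reduced to the intermediate claim $\ldeg\bigl(C_w\prod_{i=1}^d F_{e_i}\bigr)=2^d(-1)^{d+|w|}$ for every $w\in\IF_2^d$, which is exactly the paper's formula $(-2)^d(-1)^{\angles{w,(1,\ldots,1)}}$. The manipulation $F_{e_i}C_w=2A_iC_w=-2B_iC_w$ is also fine. The gap is in Stage 3: the identity $\ldeg\bigl(C_w\prod_i\pi_i^{*}(I_{1-w_i})\bigr)=1$ is precisely the nontrivial content of the proposition, and neither of your two justifications establishes it.

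The geometric justification is incorrect as stated. The supports of the $d+1$ factors do not meet in a single reduced point: already for $d=2$ and $w=(0,0)$ the common support $C_{00}\cap\pr_1^{-1}(I_1)\cap\pr_2^{-1}(I_1)$ contains the entire curve $C_{00}\cap C_{11}$, so this is an excess intersection and \cref{chow-semi-eigtl}, which concerns proper intersections of pairwise distinct components, does not apply. (The degree is indeed $1$, but the correct geometric reason is the projection formula: push forward along the birational morphism $C_w\to I_{w_1}\times\cdots\times I_{w_d}\simeq(\IP^1)^d$ and compute $\deg\bigl(\prod_i\pr_i^{*}[\mathrm{pt}]\bigr)=1$; to use this you must also check that $\varphi_{\bar M}$ carries $\pi_i^{*}(I_{1-w_i})$ to $\pr_i^{*}[I_{1-w_i}]$, which follows from the argument in the proof of \cref{schnitt-raq-prod}.) The combinatorial alternative is likewise not a proof: expanding $C_w\prod_i\pi_i^{*}(I_{1-w_i})$ does not produce a single proper monomial. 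For $d=2$, $w=(0,0)$ one gets $C_{00}C_{10}C_{01}+C_{00}C_{10}C_{11}+C_{00}C_{01}C_{11}+C_{00}C_{11}^2$, whose terms contribute $0,1,1,-1$; the answer $1$ emerges only after a cancellation that has to be proved in general. Organizing this cancellation is exactly what the paper's recursion \cref{schnitt-F-deg-eq} accomplishes: it peels off one factor at a time along a maximal chain through $w$ and verifies that every cross term $C_{w'}C_w\alpha^t(w+e_j)$ with $w'\neq w+e_j$ vanishes, using the remaining factors together with \cref{schnitt-glb} and \cref{schnitt-glc}. Your argument needs a step of this kind (or the projection-formula computation above, carried out in full) to be complete.
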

\begin{proof}
    \begin{par}
        For the proof we use the decomposition
        \[ F_{v} = \sum_{w \in \IF_2^d} (-1)^{\angles{v,w}} C_w \]
        and calculate for each vector $w \in \IF_2^d$ the value 
        of
        $\ldeg\big(C_w \prod_{i=1}^d F_{e_i}\big)$.
    \end{par}
    \begin{par}
        For each $t \in \{0,1\}$ denote by
        $\alpha^t(w)$ the product
        \[ 
            \alpha^t(w):= \prod_{\substack{i \in \{1, \ldots, d\} \\ w_i = t}} F_{e_i}.
        \]
        Obviously we have 
        $\prod_{i=1}^d F_{e_i} = \alpha^0(w)\alpha^1(w)$.
    \end{par}
    \begin{par}
        We will proof in a moment for each $j$ with $w_j \neq t$ the recursion
        \begin{equation}
            \label{schnitt-F-deg-eq}
            C_w \alpha^t(w) = 2(-1)^t C_w C_{w+e_j} \alpha^t(w + e_j).
        \end{equation}
        Let us first show that this is enough to proof the claim:
        We may take a maximal increasing chain of vectors 
        $(0,\ldots,0)=w^{(0)} < \cdots < w^{(d)} = (1,\ldots,1) \in \IF_2^d$ 
        which contains $w$.
        This means $w^{(k)} = w$ for $k = \angles{w, (1, \ldots,1)}$.
        Multiple application of \cref{schnitt-F-deg-eq} shows 
        \[ C_w \alpha^0(w) \alpha^1(w) 
            = 2^d(-1)^{d-|w|} C_{w^{(0)}} \cdot\cdots\cdot C_{w^{(d)}}.
        \]
        We can therefore calculate the degree by \cref{schnitt-konk-deg}
        \[ \ldeg\left(C_w \prod_{i=1}^d C_{e_i} \right) = 2^d (-1)^{d-|w|} 
            = (-2)^d (-1)^{\angles{w,(1,\ldots, 1)}}. \]
        The claim now follows:
        \begin{align*}
            \ldeg\left(F_v \prod_{i=1}^d F_{e_i} \right) 
            & = \sum_{w \in \IF_2^d} (-1)^{\angles{v,w}} 
            \ldeg\left(C_w \prod_{i=1}^d C_{e_i}\right) \\
            & = \sum_{w \in \IF_2^d} (-1)^{\angles{v,w}}
            (-2)^d (-1)^{\angles{v,(1,\ldots, 1)}}  \\
            & = (-4)^d \delta_{v,(1,\ldots, 1)}.
        \end{align*}
    \end{par}
    \begin{par}
        It only remains to show \cref{schnitt-F-deg-eq}.
        For this purpose we show that the difference
        \begin{equation}
            \label{schnitt-F-deg-differenz}
            C_w \alpha^t(w) - 2(-1)^t C_w C_{w+e_j} \alpha^t(w+e_j) 
        \end{equation}
        vanishes.
        According to the precondition $w_j \neq t$ we have
        $\alpha^t(w) = F_{e_j} \alpha^t(w+e_j)$ and by \cref{schnitt-F0-weg}
        \[ 
            C_w F_{e_j} = (-1)^t C_w (F_0 + (-1)^t F_{e_j})
            = (-1)^t C_w \left( 
                2\sum_{\substack{w' \in \IF_2^d \\ w'_j = t}} C_{w'}
            \right).
        \]
        The second equation holds due to 
        equation \cref{schnitt-F-hilf1} in \cref{schnitt-F}.
        We conclude for the difference term \cref{schnitt-F-deg-differenz}
        \begin{align*}
            & C_w\alpha^t(w) - 2(-1)^t C_w C_{w+e_j} \alpha^t(w+e_j) \\
            = &C_wF_{e_j}\alpha^t(w+e_j) - 2(-1)^tC_wC_{w+e_j}\alpha^t(w+e_j) \\
            = &2(-1)^t C_w \alpha^t(w+e_j) \left(
                \sum_{\substack{w' \in \IF_2^d \setminus \{w+e_j\} \\
                        w'_j = t}} C_{w'}
            \right)
        \end{align*}
        and therefore it suffices to show 
        \[ C_{w'} C_w \alpha^t(w+e_j) = 0 \]
        for each $w' \in \IF_2^d$ with $w'_j = t$ and $w' \neq w+e_j$.
        Since $w' \neq w+e_j$, there is another position $k \in \{1, \ldots, d\}$
        besides $j$, in which $w'$ and $w$ differ.
        Assume $t=1$:
        The conditions above imply $w_j=0, w'_j=1$ and
        we can furthermore assume $w_k=0, w'_k=1$,
        since otherwise $C_{w'}C_w = 0$ according to \cref{schnitt-glb}.
        Then \cref{schnitt-glc} and \cref{schnitt-F-hilf1} imply
        \[ C_{w'} C_w F_{e_k} = C_{w'} C_w (F_0 + F_{e_k}) 
            = C_{w'}C_w \left(
                2\sum_{\substack{w'' \in \IF_2^d\\ w''_k = 0}} C_{w''}
            \right)
            = 0. 
        \]
        The case $t=0$ is proven analogously.
    \end{par}
\end{proof}

\begin{par}
    Another general proposition can be made using symmetries:
\end{par}
\begin{prop}
    \label{schnitt-lok-psi}
    \mbox{}
    \begin{enumerate}[(i)]
        \item
            There is an isomorphism of graded rings,
            \[ \psi: \KC(I^d) \xrightarrow{\sim} \KC(I^d), \]
            which is uniquely determined by
            $\psi(C_v) = C_{v+(1,\ldots, 1)}$.
            The equation
            \[ \psi(F_v) = (-1)^{\angles{v,(1,\ldots, 1)}} F_v \]
            holds.

        \item
            There is an operation of the symmetric group $S_d$ onto $\KC(I^d)$,
            which is for $\sigma \in S_d$ uniquely determined by
            \[ \cdot^\sigma: \KC(I^d) \to \KC(I^d), C_v \mapsto C_{v^\sigma}. \]
            The equation
            \[ (F_v)^\sigma = F_{v^\sigma} \]
            holds.
    \end{enumerate}
    Both automorphisms of graded rings are compatible with the local degree $\ldeg$.
\end{prop}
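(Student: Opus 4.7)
The strategy is to build each map at the level of the polynomial ring $Z(I^d)$ by declaring its action on the generators $C_v$, verify that the ideal $\Rat(I^d)$ of \cref{schnitt-chw} is stable under this action, then compute the effect on the Fourier basis and on $\ldeg$.

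For part (i) I would define $\psi$ multiplicatively by $\psi(C_v) := C_{v + (1,\ldots,1)}$; for part (ii), given $\sigma \in S_d$, define $\cdot^\sigma$ by $C_v \mapsto C_{v^\sigma}$. In both cases the underlying map on $\IF_2^d$ is a bijection that respects the cube structure: the involution $v \mapsto v + (1,\ldots,1)$ sends chains in the product order to reverse chains (which are again chains) and swaps the two fibres of every projection $\pr_i$, while a coordinate permutation preserves the product order and permutes the projections. Hence each of the three families of generators of $\Rat(I^d)$ is mapped into itself: the ``not a simplex'' generators \cref{schnitt-chw1} because both maps are automorphisms of the underlying cube; the ``total sum'' generators \cref{schnitt-chw2} because the sum over all vertices is fixed; and the ``fibre'' generators \cref{schnitt-chw3} because under $\psi$ the fibre over $\pr_i(C_2)$ is sent bijectively to the fibre over $\pr_i(\psi(C_2))$, while under $\sigma$ the relation indexed by $i$ is relabelled to the one indexed by $\sigma^{-1}(i)$. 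Both maps are therefore well-defined graded ring endomorphisms, and they are invertible by applying the same recipe to $v \mapsto v + (1,\ldots,1)$ and to $\sigma^{-1}$. The Fourier identities then follow from reindexing in $F_v = \sum_w (-1)^{\angles{v,w}} C_w$: the substitution $w \leadsto w + (1,\ldots,1)$ pulls out a factor $(-1)^{\angles{v,(1,\ldots,1)}}$, and the substitution $w \leadsto w^{\sigma^{-1}}$ together with the elementary identity $\angles{v, w^{\sigma^{-1}}} = \angles{v^\sigma, w}$ (clear from a one-line change of summation index) produces $F_{v^\sigma}$.

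For compatibility with the local degree, I would invoke the moving lemma \cref{schnitt-moving} combined with the normalisation \cref{schnitt-konk-deg}: $\ldeg$ is determined on $\KC^{d+1}(I^d)$ by its values on proper top-degree monomials $C_{v_0}\cdots C_{v_d}$, being $1$ when $\{v_0,\ldots,v_d\}$ is a maximal ascending chain in the product order on $\IF_2^d$ and $0$ on every other proper monomial. Both $\psi$ and $\cdot^\sigma$ send proper monomials bijectively to proper monomials, and they preserve the property of the underlying vertex set being a maximal chain in the cube (reversing it for $\psi$, permuting it for $\sigma$). Hence they preserve $\ldeg$ monomial by monomial, and the result extends to all of $\KC^{d+1}(I^d)$ by \cref{schnitt-moving}. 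The main bookkeeping obstacle is precisely the stability of relation \cref{schnitt-chw3}: one has to check that both the hypothesis $\pr_i(C_1) \neq \pr_i(C_2)$ and the summation condition $\pr_i(C') = \pr_i(C_2)$ transform correctly under the bijection, which for $\psi$ uses that complementation in coordinate $i$ is an involution on the two fibres, and for $\sigma$ amounts to a clean relabelling of the projection index.
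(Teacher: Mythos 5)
Your proposal is correct and follows essentially the same route as the paper: well-definedness by checking that the generating relations of $\Rat(I^d)$ are permuted by each map, the Fourier identities by reindexing the sum, and $\ldeg$-compatibility by reducing to proper monomials via \cref{schnitt-moving} and observing that maximal chains are sent to (reversed or permuted) maximal chains, so \cref{schnitt-konk-deg} applies. The paper merely states these checks more tersely; your spelled-out verification of the stability of relation \cref{schnitt-chw3} is the only place where you add detail the paper leaves implicit.
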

\begin{proof}
    The existence and uniqueness of the morphisms $\psi$ and $\cdot^\sigma$
    is evident by the definition of the combinatorial Chow ring
    written in the form of \cref{schnitt-glb}-\cref{schnitt-glc}.
    The values of $F_v$ can be calculated immediately.
    For the compatibility with $\ldeg$ it suffices to consider
    proper monomials (compare \cref{schnitt-moving}).
    Let $C_{v_0} C_{v_1} \cdot \cdots \cdot C_{v_d}$ be a non-trivial proper monomial.
    Without loss of generality we may assume that $v_0, \ldots, v_d$ form an ascending
    chain.
    Since the chains 
    $(v_0)^\sigma, \ldots, (v_d)^\sigma$ and 
    $(v_d + (1,\ldots, 1)), \ldots, (v_0 + (1, \ldots, 1))$ 
    are then ascending as well,
    we can conclude by \cref{schnitt-konk-deg}
    \[ 1= \ldeg_{\KC(I^d)}( C_{v_0} \cdot\cdots\cdot C_{v_d}) 
        = \ldeg_{\KC(I^d)}( (C_{v_0} \cdot\cdots\cdot C_{v_d})^\sigma)
        = \ldeg_{\KC(I^d)}( \psi(C_{v_0} \cdot\cdots\cdot C_{v_d})). \]
\end{proof}

\begin{kor}
    \label{schnitt-lok-ungerade}
    Let $v_0, \ldots, v_d \in \IF_2^d$ with
    $(\sum_{i=0}^d v_i)\cdot (1,\ldots, 1) = 1$.
    Then the equation
    \[ \ldeg(F_{v_0} \cdot\cdots\cdot F_{v_d})=0 \]
    holds.
\end{kor}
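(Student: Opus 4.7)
The plan is to apply the automorphism $\psi$ from \cref{schnitt-lok-psi}(i) directly to the product $F_{v_0} \cdots F_{v_d}$ and exploit the fact that $\psi$ is compatible with the local degree map $\ldeg$.

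First I would compute how $\psi$ acts on the product. Since $\psi$ is a ring homomorphism and $\psi(F_v) = (-1)^{\angles{v,(1,\ldots,1)}} F_v$, one immediately gets
\[
    \psi(F_{v_0} \cdots F_{v_d}) = (-1)^{\sum_{i=0}^d \angles{v_i,(1,\ldots,1)}} F_{v_0} \cdots F_{v_d}
    = (-1)^{\big(\sum_i v_i\big)\cdot (1,\ldots,1)} F_{v_0} \cdots F_{v_d}.
\]
By the hypothesis $\big(\sum_i v_i\big)\cdot (1,\ldots,1) = 1$, so $\psi(F_{v_0}\cdots F_{v_d}) = -F_{v_0}\cdots F_{v_d}$.

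Next I would use the compatibility of $\psi$ with $\ldeg$ stated in \cref{schnitt-lok-psi}, which gives
\[
    \ldeg(F_{v_0}\cdots F_{v_d}) = \ldeg(\psi(F_{v_0}\cdots F_{v_d})) = -\ldeg(F_{v_0}\cdots F_{v_d}),
\]
so $2\ldeg(F_{v_0}\cdots F_{v_d}) = 0$. Since the local degree takes values in $\IZ$ (or $\IQ$ after extending scalars), this forces $\ldeg(F_{v_0}\cdots F_{v_d}) = 0$.

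There is no real obstacle here; the entire argument is a one-line symmetry/sign computation once \cref{schnitt-lok-psi} is available. The only care needed is to verify that the total exponent $\sum_i \angles{v_i,(1,\ldots,1)}$ reduced modulo $2$ is indeed $\big(\sum_i v_i\big)\cdot(1,\ldots,1)$, which follows from bilinearity of the pairing $\angles{\cdot,\cdot}$ over $\IF_2$.
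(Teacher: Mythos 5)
Your proof is correct and is essentially identical to the paper's own argument: both apply the automorphism $\psi$ from \cref{schnitt-lok-psi}, use $\ldeg\circ\psi=\ldeg$, and extract the sign $(-1)^{(\sum_i v_i)\cdot(1,\ldots,1)}=-1$ to force the degree to vanish. Nothing is missing.
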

\begin{proof}
    According to \cref{schnitt-lok-psi} we have $\ldeg \circ \psi = \ldeg$.
    It follows
    \begin{align*}
       \ldeg(F_{v_0} \cdot\ldots\cdot F_{v_d}) & = \ldeg(\psi(F_{v_0} \cdot\ldots\cdot
        F_{v_d})) \\
        &= (-1)^{\sum_{i=0}^d v_{i}\cdot(1, \ldots, 1)} \ldeg(F_{v_0} \cdot\ldots\cdot F_{v_d})
        = - \ldeg(F_{v_0} \cdot\ldots\cdot F_{v_d}) 
    \end{align*}
    and therefore the claim.
\end{proof}


\subsection{Computation of $\KC(I^2)$ and $\KC(I^3)$} 
\label{schnitt-kalk}
\begin{par}
    With the propositions of the last section we are able to
    compute all intersection numbers in the case $d=2$ and $d=3$.
\end{par}
\begin{satz}
    \label{schnitt-zahlen-d-2}
    Let $d=2$ and $v_1, v_2, v_3$ vectors in $\IF_2^2$. 
    Then the following holds:
    \[
        \ldeg(F_{v_1}F_{v_2} F_{v_3}) = \begin{cases}
            -32 & \textrm{ if } v_1=v_2=v_3 = (1,1), \\
            16  & \textrm{ if } \{v_1,v_2,v_3\} = \{(1,0),(0,1),(1,1)\}, \\
            0   & \textrm{ otherwise}.
        \end{cases}
    \]
\end{satz}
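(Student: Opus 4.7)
The plan is to reduce all $\binom{4+2}{3}=20$ unordered triples of vectors $v_1,v_2,v_3\in\IF_2^2$ (or rather, the monomials $F_{v_1}F_{v_2}F_{v_3}$) to a handful of representatives and then compute each by combining the three general facts already established: \cref{schnitt-F0-weg} (anything containing $F_0$ vanishes in positive degree), \cref{schnitt-lok-ungerade} (the degree vanishes when $\sum_i v_i \cdot (1,1)$ is odd), and \cref{schnitt-F-deg} (which in dimension $d=2$ gives $\ldeg(F_v F_{e_1} F_{e_2})=16$ exactly when $v=(1,1)$, and $0$ otherwise). The $S_2$\nobreakdash-action from \cref{schnitt-lok-psi}(ii) swapping $e_1\leftrightarrow e_2$ will cut the casework roughly in half.

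First I would dispose of every triple involving the zero vector by \cref{schnitt-F0-weg}, which leaves only triples drawn from $\{e_1,e_2,(1,1)\}$. Among these, the parity criterion from \cref{schnitt-lok-ungerade} (inner product with $(1,1)$) immediately kills $F_{e_1}^3$, $F_{e_2}^3$, $F_{e_1}^2F_{e_2}$, $F_{e_1}F_{e_2}^2$, $F_{e_1}F_{(1,1)}^2$, and $F_{e_2}F_{(1,1)}^2$, since in each case $\sum_i\angles{v_i,(1,1)}$ is odd. What remains are exactly three cases: the ``mixed'' one $\{e_1,e_2,(1,1)\}$ (handled directly by \cref{schnitt-F-deg}, giving $16$), the diagonal triple $F_{(1,1)}^3$, and the two ``doubled edge'' triples $F_{e_1}^2F_{(1,1)}$ and $F_{e_2}^2F_{(1,1)}$ which the symmetry reduces to one.

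For $F_{e_1}^2 F_{(1,1)}$, I would apply \cref{schnitt-F-gl3} with $e=e_1$ and $v=0$, using that $F_0\cdot F_{e_1}=0$ by \cref{schnitt-F0-weg}: this yields $F_{e_1}^2(F_{v'}-F_{v'+e_1})=0$ for every $v'$, hence in particular $F_{e_1}^2 F_{(1,1)}=F_{e_1}^2 F_{e_2}$, which has already been shown to have degree $0$ by the parity argument. The analogous identity for $F_{e_2}^2F_{(1,1)}$ follows from the $S_2$\nobreakdash-symmetry.

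The main (and only non-formal) step is computing $F_{(1,1)}^3=-32$. Here I would use the quadratic relation \cref{schnitt-F-gl2} with $e=e_1, e'=e_2, v=v'=0$: modulo the $F_0$\nobreakdash-annihilation, this gives the identity $F_{(1,1)}^2 = (F_{e_1}-F_{e_2})^2 = F_{e_1}^2-2F_{e_1}F_{e_2}+F_{e_2}^2$ in $\KC(I^2)_\IQ$. Multiplying by $F_{(1,1)}$ and applying the three degree evaluations from the previous paragraph gives
\[
\ldeg(F_{(1,1)}^3) = \ldeg(F_{(1,1)}F_{e_1}^2)-2\ldeg(F_{(1,1)}F_{e_1}F_{e_2})+\ldeg(F_{(1,1)}F_{e_2}^2) = 0-2\cdot 16+0 = -32,
\]
as required. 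The expected obstacle is precisely this last reduction: everything else is bookkeeping with parity and symmetry, whereas the value $-32$ is only accessible once one recognises that \cref{schnitt-F-gl2} lets one rewrite $F_{(1,1)}^2$ entirely in terms of products of basis\nobreakdash-vector transforms, whose pairings with $F_{(1,1)}$ are supplied by \cref{schnitt-F-deg}.
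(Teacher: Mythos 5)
Your proposal is correct and follows essentially the same route as the paper's proof: the mixed case comes from \cref{schnitt-F-deg}, most vanishing from \cref{schnitt-lok-ungerade} and $F_0$-annihilation, and $\ldeg(F_{(1,1)}^3)=-32$ from rewriting $F_{(1,1)}^2=(F_{e_1}-F_{e_2})^2$ via \cref{schnitt-F-gl2}. The only cosmetic difference is in the doubled-edge case: the paper combines two instances of \cref{schnitt-F-gl3} to get the ring identity $F_{e_1}^2F_{(1,1)}=0$ outright, whereas you use one instance to get $F_{e_1}^2F_{(1,1)}=F_{e_1}^2F_{e_2}$ and then kill the degree by parity --- both are valid.
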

\begin{proof}
    For brevity, we denote vectors of $\IF_2^2$ by concatenation of the digits,
    i.e., $10 = (1,0)$.
    The calculation $\ldeg(F_{10}F_{01}F_{11})=(-4)^2 = 16$ was already done in
    \cref{schnitt-F-deg}. 
    By \cref{schnitt-F-gl1} and \cref{schnitt-F-gl3} we deduce from \cref{schnitt-F}
    the equation
    \begin{equation}
        \label{schnitt-zahlen-gemischt}
        \begin{aligned}
            2 F_{10}^2 F_{11} 
            &= F_{10}^2 ((F_{11}+F_{01}) + (F_{11} - F_{01})) \\
            &= F_{10}(F_{10}-F_{00})(F_{11} + F_{01}) + F_{10}(F_{10} + F_{00})(F_{11}-F_{01})\\
            &= 0.
        \end{aligned}
    \end{equation}
    Together with \cref{schnitt-F-gl2} we infer
    \[
        F_{11}^3 = F_{11}(F_{11} - F_{00})^2 = F_{11}(F_{10} - F_{01})^2 =
        -2F_{10}F_{01}F_{11}
    \]
    and thus $\ldeg(F_{11}^3) = -32$.
    All other monomials vanish either by \cref{schnitt-lok-ungerade} 
    or equation \cref{schnitt-F-gl2} in \cref{schnitt-F}.
\end{proof}

\begin{par}
    The situation is more complicated in the case $d=3$. We introduce some 
    shortcuts in the notation:
    As in the previous proof we denote vectors of $\IF_2^3$ by concatenation of the
    digits
    such that $101$ means the vector $(1,0,1)$.
    Additionally we order the vectors from $\IF_2^3$ 
    first by the number of ones, then lexicographical:
    \[ 000 \prec 100 \prec 010 \prec 001 \prec 110 \prec 101 \prec 011 \prec 111. \]
    Four-tuples of vectors $(v_0, v_1, v_2, v_3) \in (\IF_2^3)^4$ are ordered
    lexicographical once again:
    $(v_0, \ldots v_3) \prec (v'_0, \ldots v'_3)$ holds iff there is an $N$
    such that $v_N \prec v'_N$ and $v_i = v'_i$ for all $i<N$.
\end{par}
\begin{par}
    Let the symmetric group $S_4$ operate on the four-tuples by permutation and the
    symmetric group $S_3$ operate on the four-tuples by uniformly permuting each vector
    in the tuple:
    Set for each $\sigma \in S_4, \tau \in S_3$
    \[
        (v_0, \ldots v_3)^{\sigma, \tau} := (v_{\sigma(0)}^\tau, \ldots v_{\sigma(3)}^\tau).
    \]
    According to \cref{schnitt-lok-psi} the equation
    \[
        \ldeg(F_{v_0}\cdots F_{v_3}) = \ldeg(F_{v_\sigma(0)}^\tau \cdots
        F_{v_{\sigma(3)}})
    \]
    holds  for all tuples 
    $(v_0, \ldots v_3)\in (\IF_2^3)^4$, $\sigma \in S_4$ and all $\tau \in S_3$. 
    Hence it suffices to calculate the value of $\ldeg(F_{v_0} \cdots F_{v_3})$
    only for the minimal element in the set
    \[ 
        \left\{ (v_0,v_1,v_2,v_3)^{\sigma,\tau} \mid \sigma \in S_4, \tau \in S_3
        \right\}.
    \]
\end{par}
\begin{satz}
    \label{schnitt-zahlen-d-3}
    Let $V=(v_0, \ldots v_3) \in (\IF_2^3)^4$ be a 4-tuple of vectors,
    such that 
    \[ (v_0,v_1,v_2,v_3) \leq (v_0,v_1,v_2,v_3)^{\sigma,\tau} \]
    holds for all $\sigma \in S_4, \tau \in S_3$.
    Then the intersection numbers in $\KC(I^3)_\IQ$ are
    \[
        \ldeg(F_{v_0}F_{v_1}F_{v_2}F_{v_3}) = \begin{cases}
            -64   & \textrm{ if } V=(100,010,001,111), \\
            -64   & \textrm{ if } V=(100,010,101,011), \\
            -64   & \textrm{ if } V=(100,110,101,111), \\
            128   & \textrm{ if } V=(100,011,011,111), \\
            128   & \textrm{ if } V=(100,111,111,111), \\
            128   & \textrm{ if } V=(110,110,101,011), \\
            -128  & \textrm{ if } V=(110,101,111,111), \\
            512   & \textrm{ if } V=(111,111,111,111), \\
            0    & \textrm{ otherwise }.
        \end{cases}
    \]
\end{satz}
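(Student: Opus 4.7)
The plan is to reduce every $\ldeg(F_{v_0}F_{v_1}F_{v_2}F_{v_3})$ to a scalar multiple of the single base degree $\ldeg(F_{100}F_{010}F_{001}F_{111}) = (-4)^3 = -64$ supplied by \cref{schnitt-F-deg}, by iterated application of the three relations \cref{schnitt-F-gl1}--\cref{schnitt-F-gl3} of \cref{schnitt-F}. By \cref{schnitt-lok-psi} the combined $S_4\times S_3$ action on $4$-tuples preserves $\ldeg$ ($S_4$ trivially, $S_3$ by part (ii) of the proposition), so it suffices to compute $\ldeg$ on the finitely many minimal representatives in the lexicographic ordering fixed just before the statement.

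First I would dispose of all ``otherwise'' cases wholesale. Any orbit with some $v_i = 000$ vanishes by \cref{schnitt-F0-weg}, and any orbit with $\sum_i \langle v_i,(1,1,1)\rangle$ odd vanishes by \cref{schnitt-lok-ungerade}. A direct enumeration of minimal representatives in $(\IF_2^3)^4/(S_4\times S_3)$ shows that these two filters leave exactly the eight tuples listed in the theorem together with a short list of further tuples; each of the latter will turn out to vanish from a single additional application of the dictionary of identities assembled in the next step, which I would verify orbit by orbit.

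Second I would build that dictionary, in direct analogy with the $d=2$ step $2F_{10}^2F_{11}=0$ carried out in the proof of \cref{schnitt-zahlen-d-2}. For $e\in E$ and $v,v'\in\IF_2^3$ one writes $F_e^2 = F_e(F_e \pm F_0)$ using $F_0 F_e = 0$ from \cref{schnitt-F-gl1}, and then splits $2F_e^2 F_w$ as $F_e^2\bigl((F_w+F_{w+e}) + (F_w-F_{w+e})\bigr)$; two applications of \cref{schnitt-F-gl3} then yield vanishings of the form $2F_e^2 F_w = 0$ whenever the requisite pair $(F_v+F_{v+e})$, $(F_{v'}-F_{v'+e})$ can be produced. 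These can be combined with the quadratic substitution \cref{schnitt-F-gl2}, which trades $(F_{v+e+e'}-F_v)(F_{v'+e+e'}-F_{v'})$ for $(F_{v+e}-F_{v+e'})(F_{v'+e}-F_{v'+e'})$ and, in particular, rewrites occurrences of $F_{111}$ in terms of sums involving $F_{100}$, $F_{010}$, $F_{001}$.

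The final step is to push each of the eight non-vanishing representatives through this dictionary until it becomes an explicit rational multiple of $F_{100}F_{010}F_{001}F_{111}$. For instance, $F_{111}^4$ is reduced by applying \cref{schnitt-F-gl2} to $(F_{111}-F_{000})^2$ and then again to the result; the tuple $(110,110,101,011)$ splits naturally via \cref{schnitt-F-gl2} applied to the $F_{110}^2$ factor; and the mixed tuples are handled by combining the $F_e^2 F_w = 0$ type vanishings with \cref{schnitt-F-gl2}. The main obstacle is not any individual algebraic step but the combinatorial bookkeeping: eight distinct reduction chains must be executed, and the signs and multiplicative factors tracked correctly, so I would organise the argument as a case table that records, for each non-vanishing orbit, one explicit chain of rewrites terminating in a numerical multiple of $F_{100}F_{010}F_{001}F_{111}$, together with a short verification that every remaining even-parity orbit evaluates to zero.
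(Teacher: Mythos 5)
Your overall strategy is the same as the paper's: reduce to $(S_4\times S_3)$-minimal representatives via \cref{schnitt-lok-psi}, kill the bulk of the orbits with \cref{schnitt-F0-weg} and \cref{schnitt-lok-ungerade}, derive the vanishing $2F_e^2F_v=0$ exactly as in the $d=2$ computation, and then rewrite the surviving tuples down to the base value $\ldeg(F_{100}F_{010}F_{001}F_{111})=-64$ from \cref{schnitt-F-deg} using \cref{schnitt-F-gl2}. The paper packages the same ingredients into three named identities (its analogues of your ``dictionary''), the most heavily used being $d(e,u,v,w)=d(e,u+e,v+e,w)$, which combines \cref{schnitt-lok-ungerade} with \cref{schnitt-F-gl2} and lets one strip a unit vector $e$ out of the other factors; you would want to isolate that relation explicitly, as it is what makes the mixed cases one-line reductions rather than ad hoc manipulations.

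One concrete step in your sketch fails as written: you propose to start the $F_{111}^4$ computation by ``applying \cref{schnitt-F-gl2} to $(F_{111}-F_{000})^2$.'' That relation only trades a shift by $e+e'$ for two \emph{unit} vectors $e\neq e'$, i.e.\ a shift whose weight is exactly $2$; since $111$ and $000$ differ in three coordinates, $(F_{111}-F_{000})$ is not of the form $(F_{v+e+e'}-F_v)$ and the substitution is not licensed (this is precisely where $d=3$ differs from $d=2$, where $F_{11}-F_{00}$ \emph{is} a two-unit-vector shift). The paper instead writes $F_{111}^4=F_{111}^2(F_{111}-F_{100})^2+2F_{111}^3F_{100}$ (the cross term $F_{100}^2F_{111}^2$ vanishing by the $F_e^2$ identity) and applies \cref{schnitt-F-gl2} to $(F_{111}-F_{100})^2=(F_{110}-F_{101})^2$, which then feeds back into the already-computed values $d(110,101,111,111)$ and $d(100,111,111,111)$. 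Beyond this, your proposal defers all eight reduction chains and the verification that the remaining even-parity orbits vanish to an unexecuted ``case table''; that bookkeeping is the actual content of the proof, so as it stands the argument is a correct plan rather than a complete proof.
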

\begin{proof}
    \begin{par}
        For brevity set 
        $d(v_0,\ldots,v_3):=\ldeg(F_{v_0} \cdots F_{v_3})$ 
        for any tuple $(v_0,\ldots v_3) \in (\IF_2^3)^4$.
        First we deduce some equations:
        In analogy to \cref{schnitt-zahlen-gemischt} we get for each $v \in \IF_2^3$
        and each unit vector $e \in E$ the relation
        \begin{equation}
            \label{schnitt-d3-e2}
            \begin{split}
                2 F_e^2 F_v & = F_e^2 ((F_v - F_{v+e}) + (F_v + F_{v+e})) \\
                & =  F_e (F_e + F_0)(F_v - F_{v+e})
                    +F_e (F_e - F_0)(F_v + F_{v+e}) \\
                & = 0.
            \end{split}
        \end{equation}
        Together with \cref{schnitt-F-gl2} one deduces for each $v \in \IF_2^3$
        the equation
        \begin{equation}
            \label{schnitt-d3-ele2}
            F_{110}^2 F_v = (F_{110} - F_{000})^2 F_v = (F_{100} - F_{010})^2 F_v
            = -2 F_{100}F_{010}F_v.
        \end{equation}
        Furthermore by combining \cref{schnitt-lok-ungerade} and
        \cref{schnitt-F-gl2} we get the equation
        \begin{equation}
            \label{schnitt-F-gl2e}
            d(e,u,v,w) = d(e, u+e, v+e, w)
        \end{equation}
        for each $u,v,w \in \IF_2^3$ and $e \in E$.
    \end{par}
    \begin{par}
        We are now able to calculate the non-trivial values of $d(v_0, \ldots v_3)$
        in ascending order according to the order $\prec$.
    \end{par}
    \paragraph{\textbf{Tuples with $\mathbf{v_0=100, v_1=010}$}} 
    \begin{par}
        According to \cref{schnitt-F-gl1} and \cref{schnitt-d3-e2} the smallest tuple
        with non-trivial intersection number has to start with
        $v_0=100, v_1=010, v_2=001$ and \cref{schnitt-F-deg} implies
        $v_3 = 111$ and the value
        $d(V) = (-4)^3 = -64$.
        For the next possible tuple $v_0=100,v_1=010,v_2=110$ 
        \cref{schnitt-F-gl2e} and \cref{schnitt-d3-e2} imply
        \[ d(100,010,110,v_3) = d(100,010,010,v_3+100) = 0\ \forall v_3 \in \IF_2^3. \]
        For the next in order, $v_0=100,v_1=010,v_2=101$ we use
        once more \cref{schnitt-F-gl2e} and get
        \[ d(100,010,101,v_3) = d(100,010,001,v_3+100). \]
        By \cref{schnitt-F-deg} the only non-trivial result is $v_3=011$, 
        thus
        \[ d(100,010,101,011) = d(100,010,001,111) = -64. \]
        Tuples of the form $v_0=100,v_1=010,v_2=101$ are not minimal under the operation of $S_4$
        and $S_3$.
    \end{par}
    \begin{par}
        It remains to examine $d(100,010,111,111)$. Once again we use
        \cref{schnitt-F-gl2e}
        and get 
        \[ d(100,010,111,111) = d(100,010,001,001) = 0. \]
        This gives a complete description of the case $v_0=100,v_1=010$.
    \end{par}
    \paragraph{\textbf{Tuples with $\mathbf{v_0=100, v_1 \succ 010}$ }} 
    \begin{par}
        We can ignore tuples with $v_1=001$, since these are not minimal under 
        the operations of $S_4, S_3$.
    \end{par}
    \begin{par}
        The next tuples in order have $v_1=110$ and these can be reduced by
        \[ d(100,110, v_2, v_3) = d(100,010, v_2+100, v_3) = d(100,010, v_2, v_3+100) \]
        to combinations already calculated.
        Thus the only non-trivial combination of this type is
        $V=(100,110,101,111)$ with
        \[ d(100,110,101,111) = d(100,010,001,111) = -64. \]
    \end{par}
    \begin{par}
        Tuples with $v_1=101$ are once again not minimal with respect to the
        $(\sigma,\tau)$-operation, so the next possible combination is 
        $V=(100,011,v_2,v_3)$. The minimality requires here 
        $011 \preceq v_2 \preceq v_3$, thus $v_2,v_3 \in \{011,111\}$.
        As \cref{schnitt-lok-ungerade} yields $d(100,011,111,111)=0$,
        we may assume $v_2=011$.
        Then one deduces by \cref{schnitt-d3-ele2}
        \[ d(100,011,011,v_3) = -2d(100,010,001,v_3) \]
        and therefore a non-trivial result with $v_3=111$ only.
    \end{par}
    \begin{par}
        The last possible tuple is $V=(100,111,111,111)$. 
        We deduce by \cref{schnitt-F-gl2e}:
        \[ d(100,111,111,111) = d(100,011,011,111) = 128.\]
    \end{par}
    \begin{par}
        So far we have covered all $(\sigma,\tau)$-minimal tuples starting with a vector
        from $E=\{100,010,001\}$.
    \end{par}
    \paragraph{\textbf{Remaining cases}}
    \begin{par}
        The next tuples in order are $V=(110,110,v_2,v_3)$. 
        These can be transformed by \cref{schnitt-d3-ele2} into
        \[ d(110,110,v_2,v_3) = -2d(100,010,v_2,v_3) \]
        and therefore yield a non-trivial result 
        only with $v_2=101, v_3 = 011$:
        \[ d(110,110,101,011) = -2d(100,010,101,011) = 128. \]
    \end{par}
    \begin{par}
        If $V$ is $(\sigma,\tau)$-minimal and contains the vector $110$ only once, 
        it also contains the vectors $101$ and $011$ only once. 
        Therefore $111$ is at least once in $V$. 
        By \cref{schnitt-lok-ungerade} we can only get a non-trivial result
        if $111$ is contained in $V$ twice or four times. 
        Hence, the only remaining cases are $V=(110,101,111,111)$ and
        $V=(111,111,111,111)$.
    \end{par}
    \begin{par}
        For the first one we deduce by \cref{schnitt-F-gl2}
        \begin{align*}
            d(110,101,111,111) & = \ldeg(F_{110}F_{101}F_{111}F_{111}) \\
            & = \ldeg\big(F_{110}F_{101}(F_{111} - F_{001})^2\big) \\
            & = \ldeg\big(F_{110}F_{101}(F_{101} - F_{011})^2\big) \\
            & = \ldeg(F_{110}F_{101}F_{011}^2 - 2 F_{110}F_{101}^2F_{011}) \\
            & = -\ldeg(F_{110}^2F_{101}F_{011}) \\
            & = -128.
        \end{align*}
        In this calculation, the already proven relations
        \[
            \ldeg(F_{110}F_{101}F_{001}^2) = 
            \ldeg(F_{110}F_{101}F_{001}F_{111}) = \ldeg(F_{100}F_{110}F_{011}F_{111}) = 0
        \]
        were used.
    \end{par}
    \begin{par}
        The case $V=(111,111,111,111)$ is shown by the analogous computation
        \begin{align*}
            d(111,111,111,111) & = \ldeg(F_{111}^4) \\
            & = \ldeg\big( F_{111}^2 (F_{111} - F_{100})^2  + 2F_{111}^3 F_{100}\big) \\
            & = \ldeg\big( F_{111}^2 (F_{110} - F_{101})^2  + 2F_{111}^3 F_{100}\big) \\
            & = -2 \ldeg( F_{110}F_{101}F_{111}^2) + 2 \ldeg( F_{100} F_{111}^3) \\
            & = -2 \cdot (-128 ) + 2 \cdot 128 \\
            & = 512.
        \end{align*}
    \end{par}
\end{proof}


\subsection{A Vanishing Condition} 

\begin{par}
    The calculations in \cref{schnitt-kalk} suggest that a lot of intersection products
    vanish. We make this observation precise in the following vanishing conjecture:
\end{par}
\begin{defn}
    Let $\Part=\{P_1, \ldots, P_l\}$ be a partition of the set $\{1, \ldots, d\}$
    and $v=(v_1, \ldots, v_d) \in \IF_2^d$.
    Then set 
    \[
        \alpha(\Part, v) := \# \{ i \in \{1, \ldots, l\} \mid \exists j \in P_i, v_j = 1 \}.
    \]
\end{defn}
\begin{defn}[vanishing condition]
    \label{limit-konv-bed}
    Let $d \in \IN$. We say that $d$ verifies the vanishing condition, 
    iff for each partition $\Part$ of $\{1, \ldots, d\}$
    and $v_0, \ldots, v_d \in \IF_2^d$ with
    \[
        \sum_{i} \alpha(\Part, v_i) < d+|\Part|
    \]
    the intersection number
    \[
        \ldeg( \prod_{i} F_{v_i} )
    \]
    vanishes.
\end{defn}
\begin{par}
    Our calculations in \cref{schnitt-kalk} verify the vanishing condition in two cases:
\end{par}

\begin{kor}
    For $d=2$ and $d=3$ the vanishing condition \cref{limit-konv-bed} is satisfied.
\end{kor}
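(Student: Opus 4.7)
The strategy is to derive the corollary directly from the complete tables of non-vanishing local degrees given by \cref{schnitt-zahlen-d-2} and \cref{schnitt-zahlen-d-3}. By contraposition, the vanishing condition is equivalent to the following assertion: for every tuple $V=(v_0,\ldots,v_d)$ with $\ldeg(\prod_i F_{v_i})\neq 0$ and every partition $\Part$ of $\{1,\ldots,d\}$, the inequality $\sum_i\alpha(\Part,v_i)\geq d+|\Part|$ holds. Since those two theorems enumerate all orbits of non-vanishing tuples, the corollary reduces to a finite case check.

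Before enumerating, I would exploit the $S_d$-symmetry to cut down the number of cases. A direct inspection of the definition of $\alpha$ yields $\alpha(\Part,v^\tau)=\alpha(\tau^{-1}(\Part),v)$ for $\tau\in S_d$, where $\tau^{-1}(\Part)$ denotes the natural $S_d$-action on partitions of $\{1,\ldots,d\}$. Combined with \cref{schnitt-lok-psi}, which says that $\ldeg(\prod_i F_{v_i})$ is invariant under the simultaneous $\tau$-action on all vectors, this shows that the truth value of the desired inequality is invariant on the $S_d$-orbit of the pair $(V,\Part)$. Since the tuples are also invariant under permutation of their entries, it suffices to check the inequality for pairs $(V^*,\Part^*)$ in which $V^*$ runs through the orbit representatives of non-vanishing tuples from the tables and $\Part^*$ through the orbit representatives of partitions under $S_d$.

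For $d=2$ there are two partition orbits (the coarsest $\{\{1,2\}\}$ and the discrete one) and two non-vanishing tuple orbits, yielding $4$ inequalities. For $d=3$ there are three partition orbits (the coarsest, a two-block partition such as $\{\{1,2\},\{3\}\}$, and the discrete partition) and eight non-vanishing tuple orbits, yielding $24$ inequalities. Each individual inequality is verified by computing $\alpha(\Part^*,v)$---which equals the number of blocks of $\Part^*$ meeting the support of $v$---for each entry $v$ of $V^*$, summing, and comparing with $d+|\Part^*|$.

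The main obstacle is not conceptual but combinatorial: several of the cases saturate the lower bound exactly, so no slack estimate is available and each inequality must be verified individually. For instance, the tuple $(100,010,001,111)$ paired with the discrete partition gives $\sum_i\alpha(\Part,v_i)=1+1+1+3=6=d+|\Part|$, and the same tuple against the coarsest partition $\{\{1,2,3\}\}$ gives $1+1+1+1=4=d+|\Part|$. Once this tabulation is carried out across all $4$ resp.\ $24$ cases, every inequality is seen to hold and the corollary follows.
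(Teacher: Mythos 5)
Your overall route is the same as the paper's: restate \cref{limit-konv-bed} contrapositively and verify the inequality $\sum_i\alpha(\Part,v_i)\ge d+|\Part|$ against the complete tables of \cref{schnitt-zahlen-d-2} and \cref{schnitt-zahlen-d-3}. (The paper additionally disposes of the coarsest partition for every $d$ at once: $\sum_i\alpha(\{\{1,\dots,d\}\},v_i)<d+1$ forces some $v_i=0$, and then $F_0$ annihilates the product.) The gap is in your symmetry reduction. The truth value of the inequality is indeed constant on orbits of the \emph{diagonal} $S_d$-action on pairs $(V,\Part)$, but the set of pairs in which $V$ and $\Part$ are \emph{independently} chosen orbit representatives does not meet every diagonal orbit: after moving $V$ to its representative $V^*$, the partition lands at some $\tau(\Part)$ in the orbit of $\Part$, and it can be moved further to your chosen $\Part^*$ only by elements of the stabilizer of $V^*$, which in general does not act transitively on the partition orbit. (For $d=2$ both partitions are $S_2$-invariant, so only the $d=3$ case with two-block partitions is affected.)

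This is not a harmless redundancy, because $\sum_i\alpha(\Part,v_i)$ genuinely varies over a partition orbit for fixed $V^*$ and saturates the bound only for some of its members. Concretely, take $V^*=(100,011,011,111)$: for $\Part=\{\{1,2\},\{3\}\}$ one finds $1+2+2+2=7$, while for $\Part=\{\{1\},\{2,3\}\}$ one finds $1+1+1+2=5=d+|\Part|$ exactly. The stabilizer of $V^*$ is $\{\mathrm{id},(2\,3)\}$, which does not identify these two partitions, so your list of $24$ checks misses the tight case, and since the bound is attained there no slack estimate can recover it. The fix is routine --- for each tuple representative check all five partitions of $\{1,2,3\}$ (equivalently, representatives under that tuple's stabilizer), i.e.\ up to $40$ rather than $24$ inequalities --- and all of them do hold, so the corollary is true; but as written your case analysis is incomplete.
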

\begin{proof}
    \begin{par}
        For the partition $\Part=\{\{1, \ldots, d\}\}$ the condition \cref{limit-konv-bed}
        is always true, since
        $\ldeg(F_0 \cdot \prod_{i=1}^d F_{v_i}) = 0$ holds for all
        $v_1, \ldots, v_d \in \IF_2^d$.
    \end{par}
    \begin{par}
        If $d=2$, there is only the partition 
        $\Part=\{\{1\},\{2\}\}$ left.
        For each $v_1, \ldots, v_3 \in \IF_2^d$ 
        with $\ldeg(F_{v_1}F_{v_2}F_{v_3}) \neq 0$
        we have by \cref{schnitt-zahlen-d-2}
        $\sum_{i=1}^3 |v_i| \geq 4$.
        Therefore \cref{limit-konv-bed} is true.
    \end{par}
    \begin{par}
        In the case $d=3$ it is easy to check in all non-trivial combinations of
        \cref{schnitt-zahlen-d-3} that 
        \[
            \sum_{i} \alpha(\Part, v_i) \geq d+|\Part|
        \]
        is satisfied.
    \end{par}
\end{proof}
With the use of the computer algebra system Sage \cite{sage}
we are also able to verify the vanishing condition 
for $d=4$ and $d=5$. Therefore we conjecture:
\begin{conj}
    The vanishing condition is true for arbitrary $d \in \IN$.
\end{conj}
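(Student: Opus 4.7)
The plan is to proceed by induction on $d$, using the Fourier-type relations of \cref{schnitt-F} to systematically reduce products of $F_{v_i}$'s to simpler forms. A first helpful reformulation: if $q_\Part: \IF_2^d \to \IF_2^{|\Part|}$ is the map whose $i$-th coordinate is $\bigvee_{j \in P_i}v_j$, then $\alpha(\Part, v) = |q_\Part(v)|$, so the hypothesis becomes $\sum_i |q_\Part(v_i)| < d+|\Part|$. This packaging makes clear why the partition-indexed formulation is natural: the trivial partition $\Part=\{\{1,\ldots,d\}\}$ is automatic from \cref{schnitt-F-gl1} via $F_0 \cdot \alpha = 0$, while the finest partition $\Part_{\max}$ gives the strongest bound $\sum_i|v_i|<2d$, and intermediate partitions interpolate between these two extremes.

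For the induction step, I would split off the last coordinate: write $v_i=(v_i', b_i)\in\IF_2^{d-1}\times\IF_2$ and use \cref{schnitt-F-hilf1} to express $F_{(v',0)}+F_{(v',1)}$ and $F_{(v',0)}-F_{(v',1)}$ as sums of $C_w$ over the two half-cubes, then reassemble. The key step should exploit the Leibniz-type relations \cref{schnitt-F-gl2} and \cref{schnitt-F-gl3} to rewrite $\prod_i F_{v_i}$ as a combination of products of elements of $\KC(I^{d-1})_\IQ$, times a correction term supported in the last coordinate. One must then verify that whenever the original hypothesis $\sum_i\alpha(\Part,v_i)<d+|\Part|$ holds, each surviving sub-product satisfies the analogous inequality for a suitable partition of $\{1,\ldots,d-1\}$, so that the inductive hypothesis applies. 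This book-keeping explains why the conjecture must be formulated for all partitions simultaneously: the induction on $d$ genuinely requires the partition-indexed version at step $d-1$, even though one might only care about $\Part_{\max}$ at step $d$.

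The main obstacle is that relation \cref{schnitt-glc} couples the coordinates globally, so $\KC(I^d)_\IQ$ is not a literal tensor power $\KC(I)_\IQ^{\otimes d}$ and any naive K\"unneth-style decomposition breaks down. As the explicit case analysis for $d=3$ in \cref{schnitt-kalk} already shows, the cross terms generated by repeated application of \cref{schnitt-F-gl2} become combinatorially intricate, and the vanishing one obtains from a direct expansion is typically strictly weaker than the sharp bound $d+|\Part|$. A conceptual remedy might come either from identifying $\KC(I^d)_\IQ$ with the Chow ring of the toric variety associated to the cubical fan and interpreting the partition-indexed vanishing as a purity-type statement in that setting, or from constructing an explicit Koszul-like complex on the $F_v$'s whose acyclicity encodes the conjecture by a weight count. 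Producing such a conceptual input is the real difficulty; the computational verification through $d=5$ recorded in the paper indicates that no purely mechanical induction along the lines sketched above has yet been made to close the gap.
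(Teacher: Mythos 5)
There is a genuine gap here, and it is worth being explicit about its nature: the statement you are addressing is stated in the paper as a \emph{conjecture}, and the paper itself offers no proof of it --- it only verifies the cases $d=2$ and $d=3$ by the explicit computations of \cref{schnitt-zahlen-d-2} and \cref{schnitt-zahlen-d-3}, and the cases $d=4,5$ by machine computation. Your proposal likewise does not constitute a proof. What you have written is a strategy outline: the reformulation $\alpha(\Part,v)=|q_\Part(v)|$ is correct and is a reasonable way to package the hypothesis, and the observation that the trivial partition case follows from $F_0\cdot\alpha=0$ is exactly how the paper handles that case. But the heart of the argument --- the inductive step --- is never carried out. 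The claim that $\prod_i F_{v_i}$ can be rewritten via \cref{schnitt-F-gl2} and \cref{schnitt-F-gl3} as a combination of products ``of elements of $\KC(I^{d-1})_\IQ$, times a correction term supported in the last coordinate,'' and that ``each surviving sub-product satisfies the analogous inequality for a suitable partition of $\{1,\ldots,d-1\}$,'' is precisely the missing content. You do not exhibit the decomposition, you do not define the induced partition, and you do not verify the inequality for the resulting terms. Your own final paragraph concedes this, correctly identifying that relation \cref{schnitt-glc} couples the coordinates globally so that $\KC(I^d)_\IQ$ is not a tensor power of $\KC(I)_\IQ$ and no K\"unneth-style splitting is available.

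To turn this into a proof you would need, at minimum: (a) a precise statement of the reduction from degree-$(d+1)$ products in $\KC(I^d)_\IQ$ to lower-dimensional data, compatible with the relations (4.14)--(4.16); (b) an explicit rule assigning to each term of the expansion a partition of $\{1,\ldots,d-1\}$ together with a proof that the quantity $\sum_i\alpha(\cdot,\cdot)-(d+|\Part|)$ does not increase under the reduction; and (c) a treatment of the cross terms that \cref{schnitt-F-gl2} generates, which, as the case analysis in the proof of \cref{schnitt-zahlen-d-3} already shows for $d=3$, do not organize themselves coordinate by coordinate. None of these is supplied, and the suggested ``conceptual remedies'' (a toric-variety interpretation, a Koszul-type complex) are named but not constructed. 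As it stands the proposal is an honest research plan for attacking the conjecture, not a proof of it; the conjecture remains open beyond $d=5$.
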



    \appendix

\section{The Category of Simplicial Sets}
\label{sk-kap}

\begin{par}
    A good description for the simplicial structure of the special fibre in our setting
    is given by simplicial sets. For convenience we repeat the basic definitions
    and properties used in this paper.
\end{par}
\begin{defn}
    A partially ordered set is a set $A$ endowed with a reflexive transitive and
    antisymmetric relation $\leq$. 
    A morphism of partially ordered sets $f: A \to B$ is a map of sets, 
    which is monotonically increasing. 
    This means $a \leq a'$ implies $f(a) \leq f(a')$ for each $a,a' \in A$.
\end{defn}
\begin{bem}
    The category of partially ordered sets has finite products. The product of two 
    partially ordered sets is given by the cartesian product $A \times B$
    endowed with the product order
    \[ 
        (a,b) \leq (a',b')  \quad \iff 
        \quad a \leq a' \text{ and } b \leq b'
    \]
\end{bem}
\begin{defn}
    Let $\Delta$ denote the simplicial category. This has as objects the finite ordered sets
    $[n]:=\{0, \ldots, n\}$ for each natural number $n \in \IN$ 
    and monotonically increasing mappings as morphisms.
    A simplicial set $\RK$ is a contravariant functor
    $\RK: \Delta \to \set$. For each $n \in \IN_0$ the set $\RK([n])$ is denoted by
    $\RK_n$ and its elements are called $n$-simplizes.
    A morphism of simplicial sets is a strict transformation of
    functors.
    The category defined by this is called \emph{category of simplicial sets} and denoted
    by $\sset$.
\end{defn}
\begin{defn}
    \label{sk-standard}
    The functor $\Delta[n]:=\Hom_{\Delta}(\cdot, [n])$ is a simplicial set,
    the \emph{standard $n$-simplex}.
\end{defn}
\begin{defn}
    Let $\RK$ denote a simplicial set and $k \in \IN$. A $k$-simplex $\sigma \in \RK_k$ is
    called degenerate if there exists a morphism $d: [k] \to [k-1]$, such that $\sigma$
    lies in the image of $d^*: \RK_{k-1} \to \RK_k$.
    The set of all nondegenerate simplices is denoted by $\RK_k^{\textrm{nd}}$.
\end{defn}
\begin{bem}
    \label{sk-sk-prod}
    By standard arguments of category theory (see \cite[Prop 8.7, Cor 8.9]{awodey}) the category $\sset$ has limits and
    colimits, which can be constructed component-by-component.
\end{bem}
\begin{par}
    In particular one has the following description of products of standard-1-simplizes:
\end{par}
\begin{kor}
    \label{sk-prod-poset}
    For each $d \in \IN$ there is a canonical bijection
    \[
        (\Delta[1])^d \simeq \Hom_{\poset}(\cdot, [1]^d),
    \]
    where $[1]^d$ is seen as product of partially ordered sets.
\end{kor}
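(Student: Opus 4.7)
The plan is to unfold both sides of the claimed bijection and show they reduce to the same universal property, namely that of the product in the category of posets.

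First I would note that for any $n$, the standard simplex evaluates as $\Delta[1]([n]) = \Hom_\Delta([n],[1])$ by definition (\cref{sk-standard}). Since $[n]$ and $[1]$ are both finite totally ordered sets and the morphisms of $\Delta$ are precisely the monotonically increasing maps between them, there is a tautological identification $\Hom_\Delta([n],[1]) = \Hom_\poset([n],[1])$; this is just the observation that the inclusion $\Delta \hookrightarrow \poset$ is fully faithful on the objects $[n]$.

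Next I would invoke \cref{sk-sk-prod}, which asserts that limits (in particular products) in the category of simplicial sets are computed pointwise. Applied to the $d$-fold product this gives, for every $[n] \in \Delta$,
\[
    (\Delta[1])^d([n]) \;=\; \bigl(\Delta[1]([n])\bigr)^d \;=\; \bigl(\Hom_\poset([n],[1])\bigr)^d.
\]
Finally, the universal property of the product $[1]^d$ in the category of posets (together with the fact that the product poset structure on $[1]^d$ is the one used in the statement) yields the canonical bijection
\[
    \bigl(\Hom_\poset([n],[1])\bigr)^d \;\simeq\; \Hom_\poset([n],[1]^d),
\]
sending a tuple $(f_1,\ldots,f_d)$ to the map $x \mapsto (f_1(x),\ldots,f_d(x))$. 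Composing the three identifications produces the asserted bijection on $n$-simplices, and since each identification is visibly natural in $[n]$ (all three are instances of universal properties), the result is an isomorphism of functors $\Delta^{\mathrm{op}} \to \set$, i.e.\ of simplicial sets.

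There is essentially no obstacle here beyond bookkeeping; the only subtle point is to make sure the totally ordered set $[1]^d$ appearing on the right-hand side is interpreted as the poset-theoretic product rather than an object of $\Delta$ (it is not totally ordered for $d \geq 2$), but this is explicitly how the statement is phrased. All naturality checks are immediate from the componentwise definition of products in $\sset$.
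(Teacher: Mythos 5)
Your proof is correct and matches the paper's own argument: the paper likewise computes the product $(\Delta[1])^d$ pointwise via \cref{sk-sk-prod} and identifies $(\Delta[1])^d_n \simeq \prod_d \Hom_{\Delta}([n],[1]) \simeq \Hom_{\poset}([n],[1]^d)$ using the universal property of the product of posets. Your version merely spells out the intermediate identification $\Hom_{\Delta}([n],[1]) = \Hom_{\poset}([n],[1])$ and the naturality check, which the paper leaves implicit.
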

\begin{proof}
    Since the product $(\Delta[1]^d)$ can be constructed component-by-component
    we get for each $n \in \IN_0$ a functorial isomorphism
    \[
        (\Delta[1])^d_n 
        \simeq \prod_{d} \Hom_{\Delta}([n], [1])
        \simeq \Hom_{\poset}([n], [1]^d)
    \]
    as claimed.
\end{proof}
\begin{par}
    Furthermore each simplicial set is a colimit of standard simplicial sets:
\end{par}
\begin{prop}
    \label{sk-kolim-simpl}
    For each simplicial set $\RK \in \sset$ the equations
    \[ K_n \simeq \Hom_{\sset}(\Delta[n], K_\cdot) \]
    and
    \[ K_\cdot = \colim_{\Delta K_\cdot} \Delta[n] = \colim_{\Delta' K_\cdot} \Delta[n]
    \]
    hold.
\end{prop}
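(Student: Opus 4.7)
The first equation is the classical Yoneda lemma applied to the functor category $\sset = \set^{\Delta^{\mathrm{op}}}$. Since $\Delta[n] = \Hom_\Delta(\cdot,[n])$ is the representable functor at $[n]$, every natural transformation $\eta: \Delta[n] \to K$ is uniquely determined by the image $\eta_{[n]}(\mathrm{id}_{[n]}) \in K_n$, and conversely every $\sigma \in K_n$ extends to a unique natural transformation whose component at $[m]$ sends $f:[m]\to[n]$ to $f^*\sigma$. Naturality in $K$ and $[n]$ is immediate. I would simply state this and refer to the standard proof of the Yoneda lemma.

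For the second equation, let me describe $\Delta K_{\cdot}$ as the category of simplices of $K$: objects are pairs $([n],\sigma)$ with $\sigma\in K_n$, and morphisms $([n],\sigma)\to([m],\tau)$ are monotone maps $f:[n]\to[m]$ satisfying $f^*\tau = \sigma$. By the first equation each such $\sigma$ corresponds to a morphism $\hat\sigma:\Delta[n]\to K$, and for a morphism $f:([n],\sigma)\to([m],\tau)$ the triangle
\[
    \hat\tau\circ\Delta[f] = \hat\sigma
\]
commutes (both sides send $\mathrm{id}_{[n]}$ to $\sigma$, hence agree by Yoneda). This gives a cocone $(\hat\sigma)_{([n],\sigma)}$ over the diagram $([n],\sigma)\mapsto\Delta[n]$ with vertex $K$. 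To verify universality, let $L\in\sset$ together with a compatible cocone $(\varphi_\sigma:\Delta[n]\to L)$ be given. Using the Yoneda identification $\varphi_\sigma\leftrightarrow\psi_\sigma\in L_n$, the compatibility condition translates into naturality $\psi_{f^*\tau}=f^*\psi_\tau$, so the assignment $\sigma\mapsto\psi_\sigma$ defines a morphism of simplicial sets $K\to L$ through which every $\varphi_\sigma$ factors uniquely. This establishes the colimit presentation.

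For the variant $\Delta' K_\cdot$ over non-degenerate simplices only, I would invoke the Eilenberg-Zilber lemma: every simplex $\sigma\in K_n$ has a unique presentation $\sigma = s^*\sigma_0$ where $s:[n]\twoheadrightarrow[m]$ is a surjection in $\Delta$ and $\sigma_0\in K_m^{\mathrm{nd}}$. Given a compatible cocone on the non-degenerate sub-diagram, one extends the construction above by defining $\psi_\sigma:=s^*\psi_{\sigma_0}$, and the uniqueness clause in Eilenberg-Zilber guarantees that this assignment is well-defined and natural, producing the required map from the non-degenerate colimit.

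The main obstacle is really just the non-degenerate variant, since it rests on the Eilenberg-Zilber lemma; the rest is a mechanical unwinding of Yoneda. I would either cite Eilenberg-Zilber from a standard reference (e.g.\ the simplicial homotopy literature) or, if a self-contained treatment is desired, spell out the induction on $n$ using the explicit description of face and degeneracy maps in $\Delta$ to produce the unique surjection $s$ and non-degenerate $\sigma_0$.
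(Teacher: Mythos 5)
Your argument is correct and is exactly the standard proof: Yoneda for the first identification, the density (co-Yoneda) cocone argument for the colimit over the full simplex category, and the Eilenberg--Zilber lemma for the reduction to non-degenerate simplices. The paper itself gives no proof but simply cites Hovey (Lemmas 3.1.3 and 3.1.4), whose argument is the one you have reproduced, so there is nothing to reconcile.
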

\begin{proof}
    This standard fact is proven for example 
    in \cite[Lemma 3.1.3, Lemma 3.1.4]{hovey}.
\end{proof}
\begin{defn}
    \label{sk-einfach-kompl}
    For each $i \in \{0,\ldots,k\}$, let $s_i$ denote the morphism
    \[ s_i: [0] \to [k], 0 \mapsto i. \]
    A simplicial set $\RK$ is called \emph{simplicial set without multiple simplices},
    if the map
    \[ \varphi: \coprod_{k=0}^\infty K^{\mathrm{nd}}_k \to \mathcal{P}(K_0), 
        t\in K^{\mathrm{nd}}_k \mapsto \{K(s_0)(t), \ldots, K(s_k)(t)\} \]
    is a monomorphism.
    If this is true, we denote the image of $\varphi$ by
    \[
        \RK_S := \im(\varphi) \subseteq \mathcal{P}(K_0).
    \]
\end{defn}
\begin{par}
    We mostly consider only simplicial sets of this type, 
    since the morphisms are uniquely defined by the mapping of the vertices:
\end{par}
\begin{prop}
    \label{sk-morph-mfs-eind}
    Let $\RK$ and $\RK'$ be two simplicial sets, 
    where $\RK'$ has no multiple simplices 
    and $f,f': \RK \to \RK'$ two morphisms of simplicial sets.
    If the restriction of $f$ and $f'$ onto the $0$-simplices,
    \[
        f\mid_0, f'\mid_0: K_0 \to K'_0,
    \]
    agree, then $f=f'$ holds.
\end{prop}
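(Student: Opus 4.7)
The plan is to show $f(\sigma) = f'(\sigma)$ for every simplex $\sigma \in \RK_n$ and every $n$. The key reduction is that $f(\sigma)$ and $f'(\sigma)$ always share the same ordered vertex tuple: naturality of $f,f'$ together with the hypothesis $f\mid_0 = f'\mid_0$ give
$\RK'(s_i)(f(\sigma)) = f(\RK(s_i)(\sigma)) = f'(\RK(s_i)(\sigma)) = \RK'(s_i)(f'(\sigma))$
for every $i \in \{0, \ldots, n\}$. Hence it suffices to prove the following intrinsic statement about $\RK'$: in a simplicial set without multiple simplices, a simplex is uniquely determined by its ordered vertex tuple.

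To prove this, I would apply the Eilenberg--Zilber lemma to write any simplex $\tau \in \RK'_n$ uniquely as $\tau = d^*(\mu)$, where $d \colon [n] \to [m]$ is a surjection in $\Delta$ and $\mu \in (\RK')^{\mathrm{nd}}_m$. Because $d$ is surjective, the vertex set of $\tau$ coincides with the vertex set of $\mu$, which is exactly $\varphi(\mu)$. Consequently, if $\tau = d^*(\mu)$ and $\tau' = (d')^*(\mu')$ have the same vertex tuple, their Eilenberg--Zilber roots satisfy $\varphi(\mu) = \varphi(\mu')$, and the defining injectivity of $\varphi$ forces $\mu = \mu'$, in particular $m = m'$.

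It remains to show $d = d'$, which rests on the auxiliary claim that every nondegenerate simplex in $\RK'$ has pairwise distinct vertices. This I would establish separately: if $\mu \in (\RK')^{\mathrm{nd}}_m$ had two equal vertices at positions $i < j$, then the $(m-1)$-simplex obtained by omitting index $j$ would still carry the full vertex set of $\mu$, so its Eilenberg--Zilber root would be a nondegenerate simplex of dimension strictly less than $m$ with the same $\varphi$-image as $\mu$, contradicting injectivity of $\varphi$. With distinct vertices of $\mu = \mu'$ in hand, the equality of vertex tuples of $d^*(\mu)$ and $(d')^*(\mu)$ reads $\RK'(s_{d(i)})(\mu) = \RK'(s_{d'(i)})(\mu)$ for all $i$, forcing $d(i) = d'(i)$ and hence $f(\sigma) = d^*(\mu) = (d')^*(\mu') = f'(\sigma)$. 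The only genuinely delicate step is this distinct-vertices lemma; the remainder is routine bookkeeping with the Eilenberg--Zilber decomposition.
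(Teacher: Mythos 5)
Your argument is correct, and its core idea coincides with the paper's: by naturality and the hypothesis $f\mid_0=f'\mid_0$, the simplices $f(\sigma)$ and $f'(\sigma)$ have the same ordered vertex tuple, so everything reduces to showing that a simplex of $\RK'$ is determined by its vertices. Where you go beyond the paper is in actually proving that last reduction. The paper's own proof restricts to nondegenerate $\sigma\in\RK_k$ and then asserts that $f(\sigma)$ is ``uniquely determined by $\varphi(f(\sigma))$'', which glosses over two points: $f(\sigma)$ may well be degenerate, where $\varphi$ is not even defined, and injectivity of $\varphi$ a priori only recovers a nondegenerate simplex from its unordered vertex \emph{set}, whereas one must recover an arbitrary simplex from its ordered vertex \emph{tuple}. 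Your route through the Eilenberg--Zilber decomposition $\tau=d^*(\mu)$, combined with the auxiliary lemma that a nondegenerate simplex of a simplicial set without multiple simplices has pairwise distinct vertices (proved correctly by comparing the $\varphi$-image of $\mu$ with that of the Eilenberg--Zilber root of a suitable face of strictly smaller dimension), is exactly what is needed to close both gaps: it identifies the roots $\mu=\mu'$ from their common vertex set and then forces $d=d'$ from the ordered tuples. So the proposal is not merely correct; it is a complete argument where the paper offers only a sketch.
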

\begin{proof}
    It is enough to show that for each $k \in \IN$ and each nondegenerated $k$-simplex
    $\sigma \in \RK_k$ the equation $f(\sigma) = f'(\sigma)$ holds.
    Since $\RK$ has no multiple simplices, 
    the $k$-simplices $f(\sigma)$ and $f'(\sigma)$ are uniquely determined by
    $\varphi(f(\sigma))$ and $\varphi(f'(\sigma))$.
    Since these elements depend only on $f_0$ and $f'_0$,
    the proposition is true.
\end{proof}
\begin{par}
    For the description of ramified base-change we need a subdivision of simplicial sets.
    This can also described completely categorial (see \cite[Appendix 1]{segal}):
\end{par}
\begin{defn}
    \mbox{}
    \label{sk-unt-def}
    \begin{enumerate}[(i)]
        \item
            Let $k \in \IN$. We denote by $\tilde \unt_k$ the functor
            \[
                \tilde\unt_k: \Delta \to \Delta 
            \]
            given on objects by
            \[ [n] \mapsto [(n+1)\cdot k - 1] \]
            and on morphisms by
            \[ \Hom_\sset([n],[m]) \ni \varphi \mapsto 
                \left( ak+b \mapsto ak+\varphi(b)
                    \textrm{ for } 0 \leq b < k
                \right).
            \]
        \item
            The functor induced by $\tilde\unt_k$
            \[ \unt_k: \sset \to \sset, K_\cdot \mapsto \tilde\unt_k \circ K_\cdot \]
            is called the \emph{$k$-fold subdivison functor}.
    \end{enumerate}
\end{defn}

    \bibliography{literatur}
\end{document}